\numberwithin{equation}{section}
\newcommand{\dWs}{\ud W_s}
\newcommand{\pushright}[1]{\ifmeasuring@#1\else\omit\hfill$\displaystyle#1$\fi\ignorespaces}
\newcommand{\pushleft}[1]{\ifmeasuring@#1\else\omit$\displaystyle#1$\hfill\fi\ignorespaces}
\newcommand{\+}{\mkern1mu}
\newcommand{\tp}{\mkern2mu}
\let\temp\phi
\let\phi\varphi
\let\varphi\temp
\renewcommand{\epsilon}{\varepsilon}
\newcommand{\twovector}[2]{\begin{pmatrix} #1 \\ #2 \end{pmatrix}} 
\newcommand{\ud}{\,\mathrm{d}}	
\newcommand{\dx}{\ud x}
\newcommand{\ds}{\ud s}
\newcommand{\dt}{\ud t}
\newcommand{\e}{\mathrm{\+ e}}
\newcommand{\C}{\mathbb{C}}
\newcommand{\K}{\mathbb{K}}
\newcommand{\N}{\mathbb{N}}
\renewcommand{\P}{\mathbb{P}}
\newcommand{\R}{\mathbb{R}}
\newcommand{\bbB}{\mathbb{B}}
\newcommand{\bbH}{\mathbb{H}}
\newcommand{\bbM}{\mathbb{M}}
\newcommand{\bbU}{\mathbb{U}}
\newcommand{\cB}{\mathcal{B}}
\newcommand{\cE}{\mathcal{E}}
\newcommand{\cF}{\mathcal{F}}
\newcommand{\cJ}{\mathcal{J}}
\newcommand{\cP}{\mathcal{P}}
\newcommand{\cX}{\mathcal{X}}
\newcommand{\bfLambda}{\mathbf{\Lambda}}
\newcommand{\bfA}{\mathbf{A}}
\newcommand{\bfB}{\mathbf{B}}
\newcommand{\bfF}{\mathbf{F}}
\newcommand{\bfH}{\mathbf{H}}
\newcommand{\bfP}{\mathbf{P}}
\newcommand{\bfS}{\mathbf{S}}
\newcommand{\bfX}{\mathbf{X}}
\DeclarePairedDelimiter{\abs}{\lvert}{\rvert}
\DeclarePairedDelimiter{\norm}{\lVert}{\rVert}
\DeclareMathOperator{\id}{id}
\DeclareMathOperator{\tr}{trace}
\DeclareMathOperator{\E}{\mathbb{E}}
\newcommand{\DeclareMathSetSC}[2]{\newcommand{#1}{\text{\textsc{#2}}}} 
\newcommand{\DeclareMathSetBF}[2]{\newcommand{#1}{\text{\textbf{#2}}}} 
\newcommand{\DeclareMathSetTT}[2]{\newcommand{#1}{\text{\texttt{#2}}}} 
\DeclareMathSetSC{\OPER}{Oper}
\DeclareMathSetSC{\TEST}{Test}
\DeclareMathSetSC{\EMPTY}{Empty}
\DeclareMathSetSC{\INF}{Inf}
\DeclareMathSetSC{\EQUIV}{Equiv}
\DeclareMathSetSC{\KON}{Kon}
\DeclareMathSetSC{\IND}{Ind}
\DeclareMathSetSC{\SAT}{Sat}
\DeclareMathSetSC{\VC}{vc}
\DeclareMathSetSC{\CNF}{cnf}
\DeclareMathSetSC{\first}{First}
\DeclareMathSetSC{\follow}{Follow}
\DeclareMathSetBF{\error}{error}
\DeclareMathSetTT{\code}{code}
\theoremstyle{definition}
\newtheorem{Def}{Definition}[section]
\theoremstyle{plain}
\newtheorem{lemma}[Def]{Lemma}
\newtheorem{theorem}[Def]{Theorem}
\newtheorem{cor}[Def]{Corollary}
\theoremstyle{remark}
\newtheorem{remark}[Def]{Remark}
\begin{document}

\title{Weak convergence rates for spatial spectral \\
		Galerkin approximations of semilinear stochastic \\
		wave equations with multiplicative noise}
\author{Ladislas Jacobe de Naurois, Arnulf Jentzen, and Timo Welti\\[2mm]
\emph{ETH Z\"{u}rich, Switzerland}}
\maketitle

\begin{abstract}
Stochastic wave equations appear in several models for evolutionary processes subject to random forces, such as the motion of a strand of DNA in a liquid or heat flow around a ring. Semilinear stochastic wave equations can typically not be solved explicitly, but the literature contains a number of results which show that numerical approximation processes converge with suitable rates of convergence to solutions of such equations. In the case of approximation results for strong convergence rates, semilinear stochastic wave equations with both additive or multiplicative noise have been considered in the literature. In contrast, the existing approximation results for weak convergence rates assume that the diffusion coefficient of the considered semilinear stochastic wave equation is constant, that is, it is assumed that the considered wave equation is driven by additive noise, and no approximation results for multiplicative noise are known. The purpose of this work is to close this gap and to establish sharp weak convergence rates for semilinear stochastic wave equations with multiplicative noise. In particular, our weak convergence result establishes as a special case essentially sharp weak convergence rates for the hyperbolic Anderson model. Our method of proof makes use of the Kolmogorov equation, the H\"{o}lder-inequality for Schatten norms, and the mild It\^{o} formula.
\end{abstract}

\tableofcontents

\section{Introduction}

In the field of numerical approximations for stochastic evolution equations one distinguishes between two conceptually fundamentally  different error criteria, that is, strong convergence and weak convergence. In the case of finite dimensional stochastic ordinary differential equations, both strong and weak convergence are quite well understood nowadays; see, e.g., the standard monographs Kloeden \& Platen~\cite{Kloeden1992} and Milstein~\cite{Milstein1995}. In the case of infinite dimensional stochastic partial differential equations with regular nonlinearities strong convergence rates are essentially well understood, but weak convergence rates are still far away from being well comprehended (see, e.g., \cite{AnderssonKovacsLarsson2014, AnderssonKruseLarsson2013, AnderssonLarsson2012, BrehierKopec2013, Brehier2014, Brehier2012, ConusJentzenKurniawan2014, DeBouardDebussche2006, Debussche2011, DebusschePrintems2009, GeissertKovacsLarsson2009, Hausenblas2003, Hausenblas2010, JentzenKurniawan2015, Kopec2014, KovacsLindnerSchilling2014, KovacsLarssonLindgren2012, KovacsLarssonLindgren2013, KovacsPrintems2014, Kruse2014, LindnerSchilling2013, Shardlow2003, Wang2014, Wang2015, WangGan2013} for several weak convergence results in the literature). In this work we are interested in weak convergence rates for stochastic wave equations. Stochastic wave equations can be used for modelling several evolutionary processes subject to random forces. Examples include the motion of a DNA molecule floating in a fluid and the dilatation of shock waves throughout the sun (see, e.g., Section~1 in Dalang~\cite{Dalang1962}), as well as heat conduction around a ring (see, e.g., Thomas~\cite{Thomas2012}). Of course, these problems usually involve complicated nonlinearities and are inaccessible for current numerical analysis. Nonetheless, numerical examination of simpler model problems as the ones considered in the present work are a key first step. Even though a number of strong convergence rates for stochastic wave equations are available (see, e.g., \cite{AntonCohenLarssonWang2015, CohenLarssonSigg2013, CohenQuer-Sardanyons2015, KovacsLarssonLindgren2013, KovacsLarssonSaedpanah2010, Quer-SardanyonsSanz-Sole2006, Walsh2006, Wang2015, WangGanTang2014}), the existing weak convergence results for stochastic wave equations in the literature (see, e.g., \cite{Hausenblas2010, KovacsLindnerSchilling2014, KovacsLarssonLindgren2012, KovacsLarssonLindgren2013, Wang2015}) assume that the diffusion coefficient is constant, in other words, that the equation is driven by additive noise. The purpose of this work is to establish essentially sharp weak convergence rates for semilinear stochastic wave equations in the case of multiplicative noise.

To illustrate the main result of this article, we consider the following setting as a special case of our general framework (see Section~\ref{subsec:WeakSetting} below). Let $ ( H , \langle \cdot, \cdot \rangle_{ H } , \norm{\cdot}_{ H } ) $ and $ ( U , \langle \cdot , \cdot \rangle_{ U } , \norm{\cdot}_{ U } ) $ be separable $ \R $-Hilbert spaces, let $ T \in ( 0 , \infty ) $, let $ ( \Omega , \cF , \P ) $ be a probability space with a normal filtration $ ( \cF_t )_{ t \in [ 0 , T ] } $, let $ ( W_t )_{ t \in [ 0 , T ] } $ be an $ \id_U $-cylindrical $ ( \cF_t )_{ t \in [ 0 , T ] } $-Wiener process, let $ \{ e_n \}_{ n \in \N } \subseteq H $ be an orthonormal basis of $ H $, let $ \{ \lambda_n \}_{ n \in \N } \subseteq (0,\infty) $ be an increasing sequence, let $ A \colon D(A) \subseteq H \to H $ be the linear operator such that $ D(A) = \bigl\{ v \in H \colon \sum_{ n \in \N } \abs{ \lambda_n \langle e_n,v \rangle_{H} }^2 < \infty \bigr\} $ and such that for all $ v \in D(A) $ it holds that $ A v = \sum_{ n \in \N } - \lambda_n \langle e_n,v \rangle_{ H } e_n $, let $ ( H_r , \langle \cdot , \cdot \rangle_{ H_r } , \norm{\cdot}_{ H_r } )$, $ r \in \R $, be a family of interpolation spaces associated to $ - A $ (see, e.g., Definition~3.5.25 in \cite{Jentzen2015}), let $ ( \bfH_r , \langle \cdot , \cdot \rangle_{ \bfH_r } , \norm{\cdot}_{ \bfH_r } )$, $ r \in \R $, be the family of $ \R $-Hilbert spaces such that for all $ r \in \R $ it holds that $ ( \bfH_r , \langle \cdot , \cdot \rangle_{ \bfH_r } , \norm{\cdot}_{ \bfH_r } ) = \bigl( H_{ \nicefrac{r}{2} } \times H_{ \nicefrac{r}{2} - \nicefrac{1}{2} }, \langle \cdot, \cdot \rangle_{ H_{ \nicefrac{r}{2} } \times H_{ \nicefrac{r}{2} - \nicefrac{1}{2} } }, \norm{ \cdot }_{ H_{ \nicefrac{r}{2} } \times H_{ \nicefrac{r}{2} - \nicefrac{1}{2} } } \bigl) $, let $ P_N \colon \bigcup_{ r \in \R } H_r \to \bigcup_{ r \in \R } H_r $, $ N \in \N \cup \{ \infty \} $, be the mappings such that for all $ N \in \N \cup \{ \infty \} $, $ r \in \R $, $ v \in H_r $ it holds that $ P_N (v) =  \sum_{ n = 1 }^N \langle (\lambda_n)^{-r} e_n , v \rangle_{ H_r } (\lambda_n)^{-r} e_n $, let $ \bfP_N \colon \bigcup_{ r \in \R } \bfH_r \to \bigcup_{ r \in \R } \bfH_r $, $ N \in \N \cup \{ \infty \} $, be the mappings such that for all $ N \in \N \cup \{ \infty \} $, $ r \in \R $, $ ( v , w ) \in \bfH_r $ it holds that $ \bfP_N ( v , w ) = \bigl( P_N ( v ) , P_N ( w ) \bigr) $, let $ \bfA \colon D ( \bfA ) \subseteq \bfH_0 \to \bfH_0 $ be the linear operator such that $ D ( \bfA ) = \bfH_1 $ and such that for all $ (v,w) \in \bfH_1 $ it holds that $ \bfA( v , w ) = ( w , A v ) $, and let $ \gamma \in ( 0,\infty ) $, $ \beta \in ( \nicefrac{\gamma}{2}, \gamma ] $, $ \rho \in [0, 2( \gamma - \beta ) ] $, $ C_{ \bfF }, C_{ \bfB } \in [ 0, \infty ) $, $ \xi \in L^2 ( \P \vert_{\cF_0} ; \bfH_{ 2( \gamma - \beta ) } ) $, $ \bfF \in \mathrm{Lip}^0( \bfH_0, \bfH_0 ) $, $ \bfB \in \mathrm{Lip}^0( \bfH_0, L_2( U, \bfH_0 ) ) $ satisfy that $ ( -A )^{ - \beta } \in L_1( H_0 ) $, $ \bfF \vert_{ \bfH_\rho } \in \mathrm{Lip}^0( \bfH_\rho, \bfH_{ 2( \gamma - \beta ) } ) $, $ \bfB \vert_{ \bfH_\rho } \in \mathrm{Lip}^0( \bfH_\rho, L_2( U, \bfH_\rho ) \cap L( U, \bfH_\gamma ) ) $, \smash{$ \bfF \vert_{ \bigcap_{ r \in \R } \bfH_r } $} \smash{$ \in C_{\mathrm{b}}^2( \bigcap_{ r \in \R } \bfH_r, \bfH_0 ) $}, \smash{$ \bfB \vert_{ \bigcap_{ r \in \R } \bfH_r } \: \in \: C_{ \mathrm{b} }^2( \bigcap_{ r \in \R } \bfH_r, L_2( U, \bfH_0 ) ) $}, \smash{$ C_{ \bfF } \; = \; \sup_{ x, v_1, v_2 \in \cap_{ r \in \R } \bfH_r, \; \norm{ v_1 }_{ \bfH_0 } \vee \norm{ v_2 }_{ \bfH_0 } \leq 1 } $} \smash{$ \norm{ \bfF''(x)( v_1, v_2 ) }_{ \bfH_0 } < \infty $}, and \smash{$ C_{ \bfB } = \sup_{ x, v_1, v_2 \in \cap_{ r \in \R } \bfH_r, \; \norm{ v_1 }_{ \bfH_0 } \vee \norm{ v_2 }_{ \bfH_0 } \leq 1 }  \norm{ \bfB''(x)( v_1, v_2 ) }_{ L_2( U, \bfH_0) } < \infty $}.

\begin{theorem} \label{thm:introduction}
Assume the above setting. Then
\begin{itemize}
\item[(i)]
it holds that there exist up to modifications unique $( \cF_t )_{ t \in [0,T] } $-predictable stochastic processes $ \bfX^N = ( X^N, \cX^N ) \colon [ 0 , T ] \times \Omega \to \bfP_N ( \bfH_\rho ) $, $ N \in N \cup \{ \infty \} $, which satisfy for all $ N \in N \cup \{ \infty \} $, $ t \in [ 0 , T ] $ that $ \sup_{ s \in [ 0 , T ] } \norm{ \bfX_s^N }_{ L^2 ( \P ; \bfH_\rho ) } < \infty $ and $ \P $-a.s.\ that
\begin{equation} \label{eq:intro,2}
\bfX_t^N =  \e^{\bfA t} \bfP_N \xi + \int_0^t \e^{ \bfA(t-s) } \bfP_N \bfF( \bfX_s^N ) \ds  + \int_0^t \e^{ \bfA(t-s) } \bfP_N \bfB( \bfX_s^N ) \dWs
\end{equation}
\item[(ii)]
and it holds that
\begin{IEEEeqnarray}{l}
\begin{split}
& \sup_{ N \in \N } \sup_{ \phi \in C^{2}_\mathrm{b}( \bfH_0 , \R ) \setminus \{ 0 \} } \biggl( \frac{ ( \lambda_N )^{ \gamma - \beta } \; \abs[\big]{ \E \bigl[ \phi \bigl( \bfX_T^\infty \bigr)\bigr] - \E \bigl[\phi \bigl( \bfX_T^N \bigr) \bigr] } }{ \norm{ \phi }_{ C_{ \mathrm{b} }^2( \bfH_0, \R ) } } \biggr) \\
& \leq ( 1 \vee T ) \bigl( 1 \vee \norm{ \xi }_{ L^2( \P; \bfH_\rho ) }^2 \bigr) \\
& \quad \cdot \Bigl( \norm{ \xi }_{ L^1(\P ; \bfH_{ 2(\gamma - \beta ) } ) } + \norm[\big]{ \bfF \vert_{ \bfH_\rho } }_{ \mathrm{Lip}^0( \bfH_\rho , \bfH_{ 2(\gamma - \beta ) } ) } + 2 \norm{ (-A)^{ -\beta } }_{ L_1(H_0) } \norm[\big]{ \bfB \vert_{ \bfH_\rho } }_{ \mathrm{Lip}^0( \bfH_\rho , L( U , \bfH_{\gamma} ) ) }^2 \Bigr) \IEEEeqnarraynumspace \\
& \quad \cdot \Bigl( 1 \vee \bigl[ T \bigl( C_{ \bfF }^2 + 2 C_{ \bfB }^2 \bigr) \bigr]^{ \nicefrac{1}{2} } \Bigr) \exp \bigl( T \bigl[ \tfrac{1}{2} + 3 \abs{ \bfF }_{ \mathrm{Lip}^0( \bfH_0, \bfH_0 ) } + 4 \abs{ \bfB }_{ \mathrm{Lip}^0( \bfH_0, L_2( U, \bfH_0 ) ) }^2 \bigr] \bigr) \\
& \quad \cdot \exp \Bigl( T \Bigl[ 2 \norm[\big]{ \bfF \vert_{ \bfH_\rho } }_{ \mathrm{Lip}^0( \bfH_\rho, \bfH_\rho ) } + \norm[\big]{ \bfB \vert_{ \bfH_\rho } }_{ \mathrm{Lip}^0( \bfH_\rho, L_2( U, \bfH_\rho ) ) }^2 \Bigr] \Bigr)  < \infty.
\end{split}
\end{IEEEeqnarray}
\end{itemize}
\end{theorem}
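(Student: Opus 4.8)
The plan is to obtain part~(i) from the standard fixed point theory for mild solutions: the wave group $\e^{\bfA t}$ consists of bounded operators on each $\bfH_r$ and commutes with the uniformly bounded spectral projections $\bfP_N$, and $\bfF,\bfB$ are globally Lipschitz on $\bfH_\rho$, so a Banach fixed point argument in the space of $(\cF_t)_{t\in[0,T]}$-predictable processes $Y$ with $\sup_{t\in[0,T]}\norm{Y_t}_{L^2(\P;\bfH_\rho)}<\infty$ yields existence, uniqueness and the stated second moment bound, uniformly in $N\in\N\cup\{\infty\}$. The core is part~(ii). To prepare for it I introduce the Kolmogorov function $u\colon[0,T]\times\bfH_\rho\to\R$ given by $u(t,x)=\E\bigl[\phi(\bfX_{T-t}^{x,\infty})\bigr]$, where $\bfX^{x,\infty}$ denotes the exact ($N=\infty$) solution with initial value $x$; by the flow property $\E[u(0,\xi)]=\E[\phi(\bfX_T^\infty)]$ and $u(T,\cdot)=\phi$. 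Differentiating the flow $x\mapsto\bfX^{x,\infty}$ once and twice, and applying Gr\"onwall's lemma to the linearized equations, I expect to obtain that $u(t,\cdot)$ is twice Fr\'echet differentiable with $\sup_{t,x}\norm{u'(t,x)}_{L(\bfH_0,\R)}$ and $\sup_{t,x}\norm{u''(t,x)}_{L(\bfH_0,L(\bfH_0,\R))}$ bounded by $\norm{\phi}_{C_{\mathrm{b}}^2(\bfH_0,\R)}$ times the exponential-in-$T$ factors appearing on the right-hand side of~(ii); it is here that the constants $\abs{\bfF}_{\mathrm{Lip}^0}$, $\abs{\bfB}_{\mathrm{Lip}^0}$ (through the gradient and Hessian flows) and $C_{\bfF}$, $C_{\bfB}$ (through the second-order flow) enter. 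Moreover $u$ solves the Kolmogorov backward equation $(\partial_t u)(t,x)+u'(t,x)[\bfA x+\bfF(x)]+\tfrac12\sum_{k}u''(t,x)[\bfB(x)g_k,\bfB(x)g_k]=0$ with $u(T,\cdot)=\phi$, where $(g_k)_{k\in\N}$ is an orthonormal basis of $U$.

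Next I decompose the weak error, using $\E[\phi(\bfX_T^\infty)]=\E[u(0,\xi)]$ and $\E[\phi(\bfX_T^N)]=\E[u(T,\bfX_T^N)]$, as
\[
\E[\phi(\bfX_T^\infty)]-\E[\phi(\bfX_T^N)]=\bigl(\E[u(0,\xi)]-\E[u(0,\bfP_N\xi)]\bigr)+\bigl(\E[u(0,\bfP_N\xi)]-\E[u(T,\bfX_T^N)]\bigr).
\]
To the second bracket I apply the mild It\^o formula to $s\mapsto u(s,\bfX_s^N)$, which is the appropriate tool since $\bfA$ is unbounded and $u(t,\cdot)$ need not lie in the domain of $\bfA^{*}$. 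Because $\bfX^N$ is driven by $\bfP_N\bfF$ and $\bfP_N\bfB$, because the unbounded drift $\bfA\,\cdot$ is common to $\bfX^N$ and to the Kolmogorov equation, and because $\bfP_N$ commutes with $\e^{\bfA t}$ (so that $\bfX_s^N=\bfP_N\bfX_s^N$), the semigroup/$\bfA$-contributions cancel and only the finite-dimensional generator difference survives:
\begin{align*}
&\E[u(0,\bfP_N\xi)]-\E[u(T,\bfX_T^N)]=-\E\int_0^T\Bigl(u'(s,\bfX_s^N)[(\bfP_N-\id)\bfF(\bfX_s^N)]\\
&\quad+\tfrac12\textstyle\sum_{k}\bigl(u''(s,\bfX_s^N)[\bfP_N\bfB(\bfX_s^N)g_k,\bfP_N\bfB(\bfX_s^N)g_k]-u''(s,\bfX_s^N)[\bfB(\bfX_s^N)g_k,\bfB(\bfX_s^N)g_k]\bigr)\Bigr)\ds.
\end{align*}

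The first bracket is the initial-value error; since $u(0,\cdot)$ is Lipschitz on $\bfH_0$ and $\norm{(\id-\bfP_N)y}_{\bfH_0}\le(\lambda_N)^{-(\gamma-\beta)}\norm{y}_{\bfH_{2(\gamma-\beta)}}$, it is bounded by $(\lambda_N)^{-(\gamma-\beta)}$ times $\norm{\phi}_{C_{\mathrm{b}}^2}$, the gradient bound for $u$, and $\norm{\xi}_{L^1(\P;\bfH_{2(\gamma-\beta)})}$. The drift term in the integrand is handled the same way: $\norm{(\bfP_N-\id)\bfF(x)}_{\bfH_0}\le(\lambda_N)^{-(\gamma-\beta)}\norm{\bfF(x)}_{\bfH_{2(\gamma-\beta)}}$, and $\norm{\bfF(x)}_{\bfH_{2(\gamma-\beta)}}$ is controlled by $\norm{\bfF\vert_{\bfH_\rho}}_{\mathrm{Lip}^0(\bfH_\rho,\bfH_{2(\gamma-\beta)})}$ together with the uniform moment bound $\sup_N\sup_s\norm{\bfX_s^N}_{L^2(\P;\bfH_\rho)}$ from part~(i), again producing the rate $(\lambda_N)^{-(\gamma-\beta)}$.

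The noise term is the main obstacle. Telescoping $\bfP_N\bfB=\bfB-(\id-\bfP_N)\bfB$ reduces it, up to a doubly-projected remainder of order $(\lambda_N)^{-2(\gamma-\beta)}$ that is absorbed into the leading rate, to traces $\tr_{\bfH_0}\bigl(u''(s,\bfX_s^N)\,(\id-\bfP_N)\,\bfB(\bfX_s^N)\bfB(\bfX_s^N)^{*}\bigr)$, where $u''$ is viewed as a self-adjoint operator on $\bfH_0$. Writing $\bfLambda$ for the componentwise action of $(-A)$ on $\bfH_0$ — so that $\bfLambda^{-\beta}$ is trace class with $\norm{\bfLambda^{-\beta}}_{L_1(\bfH_0)}=2\norm{(-A)^{-\beta}}_{L_1(H_0)}$, which is the source of the factor $2$ — and $\tilde{\bfB}=\bfLambda^{\gamma/2}\bfB$ — so that $\norm{\tilde{\bfB}}_{L(U,\bfH_0)}=\norm{\bfB}_{L(U,\bfH_\gamma)}$ — I factor
\[
(\id-\bfP_N)\bfB\bfB^{*}=\bigl[(\id-\bfP_N)\bfLambda^{-(\gamma-\beta)}\bigr]\,\bfLambda^{\gamma/2-\beta}\,\tilde{\bfB}\tilde{\bfB}^{*}\,\bfLambda^{-\gamma/2}.
\]
H\"older's inequality for Schatten norms ($L_\infty\cdot L_1$) bounds the trace by $\norm{u''(s,\bfX_s^N)}_{L(\bfH_0)}$ times $\norm{(\id-\bfP_N)\bfLambda^{-(\gamma-\beta)}}_{L(\bfH_0)}\le(\lambda_N)^{-(\gamma-\beta)}$ times $\norm{\bfLambda^{\gamma/2-\beta}\tilde{\bfB}\tilde{\bfB}^{*}\bfLambda^{-\gamma/2}}_{L_1(\bfH_0)}$; a second, three-factor step $L_p\cdot L_\infty\cdot L_q$ with the bounded middle factor $\tilde{\bfB}\tilde{\bfB}^{*}$ and conjugate exponents $p=\tfrac{\beta}{\beta-\gamma/2}$, $q=\tfrac{2\beta}{\gamma}$ — which satisfy $\tfrac1p+\tfrac1q=1$ precisely because $\beta\in(\tfrac{\gamma}{2},\gamma]$ — distributes the trace-class weight of $\bfLambda^{-\beta}$ onto the two $\bfLambda$-powers and gives $\norm{\bfLambda^{\gamma/2-\beta}\tilde{\bfB}\tilde{\bfB}^{*}\bfLambda^{-\gamma/2}}_{L_1(\bfH_0)}\le 2\norm{(-A)^{-\beta}}_{L_1(H_0)}\norm{\bfB}_{L(U,\bfH_\gamma)}^{2}$, the last factor being controlled by $\norm{\bfB\vert_{\bfH_\rho}}_{\mathrm{Lip}^0(\bfH_\rho,L(U,\bfH_\gamma))}^2$ and the moment bound. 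Collecting the three contributions, factoring out $(\lambda_N)^{-(\gamma-\beta)}$ and $\norm{\phi}_{C_{\mathrm{b}}^2}$, and bounding the remaining constants (the gradient and Hessian bounds for $u$, the moment bound for $\bfX^N$) by the stated product of norms and exponentials yields the estimate after taking the supremum over $\phi\neq0$ and $N\in\N$. The decisive difficulty is exactly this noise term: extracting the sharp rate $(\lambda_N)^{-(\gamma-\beta)}$ requires simultaneously the boundedness of $u''$ as a bilinear form on $\bfH_0$ (from the Kolmogorov regularity) and the two-step Schatten--H\"older argument, whose exponent budget closes only under the hypothesis $\beta\in(\tfrac{\gamma}{2},\gamma]$.
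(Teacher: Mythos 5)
Your proposal for part~(i) and the overall error decomposition is sound, but part~(ii) rests on an unproven---and, under the paper's hypotheses, unprovable---foundation: you build the entire argument on the Kolmogorov function $u(t,x)=\E[\phi(\bfX_{T-t}^{x,\infty})]$ of the \emph{infinite-dimensional} limit equation, asserting that the flow $x\mapsto\bfX^{x,\infty}$ is twice Fr\'echet differentiable on $\bfH_0$, that $u\in C^{1,2}$ with the stated bounds, that $u$ solves the backward Kolmogorov equation with the unbounded operator $\bfA$, and that a mild-It\^o computation for $s\mapsto u(s,\bfX_s^N)$ produces a clean cancellation of the $\bfA$-contributions. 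None of these steps is justified (you yourself write ``I expect to obtain''), and the first one cannot even get started under the stated assumptions: the coefficients are only assumed to satisfy $\bfF\vert_{\bigcap_{r\in\R}\bfH_r}\in C_{\mathrm{b}}^2(\bigcap_{r\in\R}\bfH_r,\bfH_0)$ and $\bfB\vert_{\bigcap_{r\in\R}\bfH_r}\in C_{\mathrm{b}}^2(\bigcap_{r\in\R}\bfH_r,L_2(U,\bfH_0))$, i.e.\ second-order differentiability only on the intersection space, not on $\bfH_0$, so the first and second variation equations for the flow on $\bfH_0$ are not defined. Establishing infinite-dimensional Kolmogorov regularity is precisely the central obstruction in weak-error analysis for SPDEs, and the paper's proof is engineered to avoid it: it introduces the Kolmogorov functions $u^J$ of the \emph{finite-dimensional} spectral Galerkin systems, $J\in\cP_0(\bbH)$, where $\bfP_J(\bfH_0)\subseteq\bigcap_{r\in\R}\bfH_r$ (so the weak $C_{\mathrm{b}}^2$ hypotheses suffice), where $\bfA$ restricted to $\bfP_J(\bfH_0)$ is bounded (so Lemma~\ref{lem:kolmogorov} and the \emph{standard} It\^o formula apply, see \eqref{eq:weakrates,8}--\eqref{eq:weakrates,19}), and where the derivative bounds are uniform in $J$ because $\sup_{s\in[0,\infty)}\norm{\e^{\bfA s}}_{L(\bfH_0)}=1$ (Lemma~\ref{lem:length_preservation}, Lemma~\ref{lem:finiteness_kolmogorov}). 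The statement for the limit $N=\infty$ is then recovered at the very end by a perturbation/limiting argument (Lemma~\ref{lem:perturbation}, \eqref{eq:weakrates,13}--\eqref{eq:weakrates,17}, and Lemma~\ref{lem:limsup}), so that no infinite-dimensional Kolmogorov equation is ever needed. To repair your proof you would either have to strengthen the hypotheses (full $C_{\mathrm{b}}^2$-regularity of $\bfF,\bfB$ on $\bfH_0$ plus a rigorous infinite-dimensional Kolmogorov theory compatible with the wave group) or restructure it along the paper's finite-dimensional lines.

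On the positive side, the quantitative core of your noise-term treatment is correct: with your convention $\bfLambda=\mathrm{diag}(-A,-A)$, the factorization $(\id-\bfP_N)\bfB\bfB^{*}=[(\id-\bfP_N)\bfLambda^{-(\gamma-\beta)}]\,\bfLambda^{\gamma/2-\beta}\,\tilde{\bfB}\tilde{\bfB}^{*}\,\bfLambda^{-\gamma/2}$ is algebraically valid, the exponents $p=\nicefrac{2\beta}{(2\beta-\gamma)}$, $q=\nicefrac{2\beta}{\gamma}$ are conjugate, and the two-step Schatten--H\"older bound reproduces exactly the constant $2\norm{(-A)^{-\beta}}_{L_1(H_0)}\norm{\bfB}_{L(U,\bfH_\gamma)}^2$; this is equivalent to the paper's estimate \eqref{eq:weakrates,10} with the identity $\norm{\bfLambda^{-\beta}}_{L_2(\bfH_0)}^2=2\norm{(-A)^{-\beta}}_{L_1(H_0)}$ (in the paper's square-root convention for $\bfLambda$). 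Likewise your drift and initial-value estimates match \eqref{eq:weakrates,14} and \eqref{eq:weakrates,16}. But all of these are bounds on expressions involving derivatives of $u$, so they only become a proof once the regularity of $u$ is actually established---which is the part you have not done.
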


Theorem~\ref{thm:introduction} is a consequence of the more general results in Remark~\ref{rmk:existence_regularity} and Theorem~\ref{thm:weakrates} below (see Corollary~\ref{cor:weak_rates_estimate}). Our proof of Theorem~\ref{thm:weakrates} uses, as usual in the case of weak convergence analysis, the Kolmogorov equation (see \eqref{eq:weakrates,19} below) as well as the H\"{o}lder inequality for Schatten norms (see \eqref{eq:weakrates,10} below). In addition, the proof of Theorem~\ref{thm:weakrates} employs the mild It\^{o} formula (see Corollary~1 in Da~Prato et al.~\cite{DaPratoJentzenRoeckner2010}) to obtain suitable a priori estimates for solutions of \eqref{eq:intro,2} (see Lemma~\ref{lem:finiteness} and \eqref{eq:weakrates,18} in Section~\ref{subsec:weak_rates} below for details). The detailed proof of Theorem~\ref{thm:introduction} and Theorem~\ref{thm:weakrates}, respectively, can also be found in Section~\ref{subsec:weak_rates}.

Next we illustrate Theorem~\ref{thm:introduction} by a simple example (cf. Corollary~\ref{cor:Anderson_model}). In the case where $ ( H, \langle \cdot, \cdot \rangle_H, \norm{\cdot}_H ) = ( U , \langle \cdot , \cdot \rangle_{ U } , \norm{\cdot}_{ U } ) = \bigl( L^2( \lambda_{ (0,1) }; \R ), \langle \cdot, \cdot \rangle_{ L^2( \lambda_{ (0,1) }; \R ) }, \norm{ \cdot }_{ L^2( \lambda_{ (0,1) }; \R ) } \bigr) $, $ \xi = ( \xi_0, \xi_1 ) \in H_0^1( (0,1); \R ) \times H $, $ \bfF = 0 $, where $ A \colon D(A) \subseteq H \to H $ is the Laplacian with Dirichlet boundary conditions on $ H $, and where $ \bfB \colon H \times H_{ -\nicefrac{1}{2} } \to L_2( H, H \times H_{ -\nicefrac{1}{2} } ) $ is the mapping which satisfies for all $ (v,w) \in H \times H_{ -\nicefrac{1}{2} } $, $ u \in C( [0,1], \R ) $ and $ \lambda_{ (0,1) } $-a.e.\ $ x \in ( 0, 1 ) $ that $ \bigl( \bfB( v, w ) u \bigr) (x) = \bigl( 0,  v(x) \cdot u(x) \bigr) $, the stochastic processes $ X^N \colon [ 0 , T ] \times \Omega \to P_N ( H ) $, $ N \in \N \cup \{ \infty \} $, are mild solutions of the SPDEs
\begin{equation} \label{eq:intro,1}
\ddot{X}_t (x) = \tfrac{ \partial^2 }{ \partial x^2 } X_t (x) + P_N X_t (x) \dot{W}_t(x)
\end{equation}
with $ X_t(0) = X_t(1) =0 $, $ X_0(x) = ( P_N \xi_0 ) (x) $, $ \dot{X}_0 (x) = ( P_N \xi_1 ) (x) $ for $ t \in [ 0, T ] $, $ x \in ( 0, 1 ) $, $ N \in \N \cup \{ \infty \} $. In the case $ N = \infty $, \eqref{eq:intro,1} is known as the hyperbolic Anderson model in the literature (see, e.g., Conus et al.~\cite{ConusJosephKhoshnevisanShiu2011}). Theorem~\ref{thm:introduction} applied to \eqref{eq:intro,1} ensures for all $ \phi \in C_{ \mathrm{b} }^2( H, \R ) $, $ \epsilon \in (0,\infty) $ that there exists a real number $ C \in [0, \infty) $ such that for all $ N \in \N $ it holds that 
\begin{equation}
\abs[\big]{ \E \bigl[ \phi \bigl(X_T^\infty \bigr)\bigr] - \E \bigl[\phi \bigl(X_T^N \bigr) \bigr] } \leq C \cdot N^{ \epsilon - 1 }
\end{equation}
(see Corollary~\ref{cor:Anderson_model}). We thus prove that the spectral Galerkin approximations converge with the weak rate 1- to the solution of the hyperbolic Anderson model. The weak rate 1- is exactly twice the well-known strong convergence rate of the hyperbolic Anderson model. To the best of our knowledge, Theorem~\ref{thm:introduction} is the first result in the literature that establishes an essentially sharp weak convergence rate for the hyperbolic Anderson model. Theorem~\ref{thm:introduction} also establishes essentially sharp weak convergence rates for more general semilinear stochastic wave equations (see Corollary~\ref{cor:wave_equation} and Corollary~\ref{cor:Anderson_model} below).

The remainder of this article is organized as follows. In Sections~\ref{subsec:notation} and \ref{subsec:setting} the general notation and framework is presented. Section~\ref{subsec:preliminaries} states mostly well-known existence, uniqueness, and regularity results, while Section~\ref{subsec:basic_properties} collects basic properties about the interpolation spaces and the semigroup associated to the deterministic wave equation. The main result of this article, Theorem~\ref{thm:weakrates} below, is stated and proven in Section~\ref{subsec:weak_rates}. Finally, Section~\ref{subsec:examples} shows how this abstract result can be applied to relevant problems, in particular, the hyperbolic Anderson model (see Corollary~\ref{cor:wave_equation} and Corollary~\ref{cor:Anderson_model} below).

\subsection{Notation} \label{subsec:notation}

Throughout this article the following notation is used.
For a set $ A $ we denote by $ \cP( A ) $  the power set of $ A $ and by $ \cP_0( A ) $ the set of all finite subsets of $ A $.
Furthermore, for two sets $ A $ and $ B $ we denote by $ A \triangle B $ be the set given by $ A \triangle B = ( A \setminus B ) \cup ( B \setminus A ) $ and by $ \bbM( A , B ) $ the set of all mappings from $ A $ to $ B $.
In addition, let $ (\cdot) \wedge (\cdot), (\cdot) \vee (\cdot) \colon \R^2 \to \R $ be the mappings with the property that for all $ x, y \in \R $ it holds that $ x \wedge y = \min\{ x, y \} $ and $ x \vee y = \max\{ x, y \} $.
Moreover, let $ \Gamma \colon ( 0 , \infty ) \to ( 0 , \infty ) $ be the Gamma function, that is, for all $ x \in ( 0 , \infty  ) $ it holds that $ \Gamma ( x ) = \int_0^{ \infty } t^{ ( x - 1 ) } \tp \e^{ - t }  \dt $, and let $ \mathcal{E}_r \colon [ 0 , \infty ) \to [ 0 , \infty )$, $ r \in ( 0 , \infty ) $, be the mappings such that for all $ r \in ( 0 , \infty ) $, $ x \in [ 0 , \infty ) $ it holds that \smash{$ \mathcal{E}_{r} [x] = \bigl[ \sum_{ n = 0 }^{ \infty } \frac{ x^{2n} \Gamma (r)^n }{ \Gamma ( nr + 1 ) } \bigr]^{ \nicefrac{1}{2} } $} (cf. Chapter~7 in Henry~\cite{Henry1981} and, e.g., Definition~1.3.1 in \cite{Jentzen2015}).
Furthermore, for a metric space $ ( E, d_E ) $, a dense subset $ A \subseteq E $, a complete metric space $ ( F, d_F ) $, a uniformly continuous mapping $ f \colon A \to F $, and the unique mapping $ \tilde{f} \in C( E, F ) $ with the property that $ \tilde{f} \vert_{ A } = f $ (see, e.g., Proposition~2.5.19 in \cite{Jentzen2015}), we often write, for simplicity of presentation, $ f $ instead of $ \tilde{f} $ in the following.
In addition, for two $ \R $-Banach spaces $ ( V, \norm{\cdot}_{ V } ) $ and $ ( W, \norm{\cdot}_{ W } ) $ with $ V \neq \{0\} $, an open subset $ U \subseteq V $, and a natural number $ k \in \N = \{ 1,2,3,\ldots \} $, let $ \abs{\cdot}_{ C_\mathrm{b}^{k} ( U , W ) }, \norm{\cdot}_{ C_\mathrm{b}^{k} ( U , W ) } \colon C^{k} ( U , W ) \to [ 0 , \infty ] $ be the mappings with the property that for all $ f \in C^k( U, W ) $ it holds that
\begin{align}
\abs{f}_{ C_\mathrm{b}^{ k } ( U, W ) } & = \sup_{ x \in U } \norm{ f^{(k)}(x) }_{ L^{(k)} ( V , W ) } = \sup_{ x \in U } \sup_{ v_1,\ldots,v_k \in V \setminus \{0\} } \frac{ \norm{ f^{(k)} (x) (v_1,\ldots, v_k) }_W }{ \norm{v_1}_V \cdot \ldots \cdot \norm{v_k}_V }, \\
\norm{f}_{ C_\mathrm{b}^{ k } ( U, W ) } & = \norm{ f(0) }_{W} + \sum_{ \ell = 1 }^{ k } \abs{f}_{ C_\mathrm{b}^{ \ell } ( U , W ) },
\end{align}
and we denote by $ { C_\mathrm{b}^{ k } ( U, W ) } $ the set given by $ C_\mathrm{b}^{ k } ( U, W ) = \bigl\{ f \in C^{ k } ( U, W ) \colon \norm{f}_{ C_\mathrm{b}^{ k } ( U, W ) } < \infty \bigr\}$.
Moreover, for two $ \R $-Banach spaces $ ( V, \norm{\cdot}_{ V } ) $ and $ ( W, \norm{\cdot}_{ W } ) $ with $ V \neq \{0\} $, an open subset $ U \subseteq V $, and a number $ k \in \N_0 = \{ 0,1,2,\ldots \} $, let $ \abs{\cdot}_{ \mathrm{Lip}^k ( U , W ) }, \norm{\cdot}_{ \mathrm{Lip}^{k} ( U , W ) } \colon C^{ k } ( U , W ) \to [ 0 , \infty ] $ be the mappings with the property that for all $ f \in C^k( U , W ) $ it holds that
\begin{align}
\abs{f}_{ \mathrm{Lip}^k ( U, W ) } & = 
\begin{cases}
\sup_{ \stackrel{ x,y \in U,  }{ x \neq y } } \Bigl( \frac{ \norm{ f (x) - f (y) }_W }{ \norm{x - y}_{ V } } \Bigr) & \colon k = 0, \\
\sup_{ \stackrel{ x,y \in U,  }{ x \neq y } } \Bigl( \frac{ \norm{ f^{ (k) }(x) - f^{ (k) }(y) }_{ L^{ (k) } ( V , W ) } }{ \norm{x - y}_{ V } } \Bigr) & \colon k \in \N,
\end{cases} \\
\norm{f}_{ \mathrm{Lip}^k ( U, W ) } &= \norm{ f(0) }_{W} + \sum_{ \ell = 0 }^{ k } \abs{ f }_{ \mathrm{Lip}^{\ell} ( U, W ) },
\end{align}
and we denote by $ \mathrm{Lip}^k ( U, W ) $ the set given by $ \mathrm{Lip}^k ( U, W ) = \bigl\{ f \in C^{ k } ( U, W ) \colon \norm{f}_{ \mathrm{Lip}^{k} ( U, W ) } < \infty \bigr\} $.
Additionally, for two normed $ \R $-vector spaces $ ( V, \norm{\cdot}_{ V } ) $ and $ ( W, \norm{\cdot}_{ W } ) $ let $ \norm{\cdot}_{ \mathrm{LG}( V , W ) } \colon \bbM ( V , W ) \to [ 0 , \infty ] $ be the mapping such that for all $ f \in \bbM( V, W ) $ it holds that \smash{$ \norm{f}_{ \mathrm{LG}( V, W ) } = \sup_{ v \in V } \bigl( \frac{ \norm{f(v)}_W }{ \max \{ 1 , \norm{v}_V \} } \bigr) $}.
For an $ \R $-Hilbert space $ ( H , \langle \cdot , \cdot \rangle_{H} , \norm{\cdot}_{H} )$ let $ \cJ^{H} \colon L^{(2)} ( H , \R ) \to L( H ) $ be the mapping with the property that for all $ \beta \in L^{ ( 2 ) } ( H , \R ) $, $ h_1, h_2 \in H $ it holds that $ \beta ( h_1 , h_2 ) =  \langle h_1 , \cJ_{ \beta }^{H} h_2 \rangle_{H} $.
Furthermore, for $ \R $-Hilbert spaces $ ( H_i , \langle \cdot , \cdot \rangle_{ H_i } , \norm{\cdot}_{ H_i } ) $, $ i \in \{1,2\} $, let $ \norm{\cdot}_{ L_{p} ( H_1, H_2 ) } \colon L( H_1 , H_2 ) \to [ 0 , \infty ] $, $ p \in [ 1 , \infty ) $, be the mappings with the property that for all $ p \in [ 1 , \infty ) $, $ A \in L( H_1, H_2 ) $ it holds that \smash{$ \norm{ A }_{ L_{p} ( H_1 , H_2 ) } = \bigl( \tr_{ H_1 } ( ( A^{ \star } A )^{ \nicefrac{p}{2} } ) \bigr)^{ \nicefrac{1}{p} } $}, we denote by $ L_{p} ( H_1, H_2 ) $ the set given by $ L_{p} ( H_1, H_2 ) = \bigl\{ A \in L( H_1, H_2 ) \colon \norm{A}_{L_p( H_1, H_2 ) } < \infty \bigr\} $, and we call $ L_{p} ( H_1, H_2 ) $ the Schatten $p$-class of bounded linear operators from $ H_1 $ to $ H_2 $. 
For brevity, for an $ \R $-Hilbert space $ ( H , \langle \cdot , \cdot \rangle_{ H } , \norm{\cdot}_{ H } ) $ and a number $ p \in [ 1 , \infty ) $, we denote $ L_p ( H , H ) $ by $ L_p( H ) $ and we call $ L_p( H ) $ the Schatten $p$-class of bounded linear operators on $ H $.
In addition, for an $ \R $-Hilbert space $ (H, \langle \cdot,\cdot \rangle_H, \norm{\cdot}_H ) $, an orthonormal basis $ \bbB \subseteq H $ of $ H $, a mapping $ \lambda \colon \bbB \to \R $, a linear operator $ A \colon D(A) \subseteq H \to H $ satisfying that $ D(A) = \bigl\{ v \in H \colon \sum_{ b \in \bbB } \abs{\lambda_b \langle b, v \rangle_{H} }^2 < \infty \bigr\} $ and that for all $ v \in D(A) $ it holds that $ Av = \sum_{ b \in \bbB } \lambda_b \langle b , v \rangle_H b $, and a mapping $ \phi \colon \R \to \R $, let $ \phi( A ) \colon D( \phi(A) ) \subseteq H \to H $ be the linear operator satisfying that $ D( \phi(A) ) = \bigl\{ v \in H \colon \sum_{ b \in \bbU } \abs{ \phi(\lambda_b) \langle b , v \rangle_{H} }^2 < \infty \bigr\} $ and that for all $ v \in D( \phi(A) ) $ it holds that $ \phi( A ) v = \sum_{ b \in \bbB } \phi ( \lambda_b ) \langle b , v \rangle_{H} b $.
For two $ \R $-inner product spaces $ (V, \langle \cdot, \cdot \rangle_V, \norm{\cdot}_V ) $ and $ (W, \langle \cdot, \cdot \rangle_W, \norm{\cdot}_W) $ we denote by $ ( V \times W, \langle \cdot, \cdot \rangle_{ V \times W}, \norm{ \cdot }_{ V \times W } ) $ the $ \R $-inner product space such that for all $ x_1 = (v_1, w_1), x_2 = (v_2, w_2) \in V \times W $ it holds that $ \langle x_1, x_2 \rangle_{ V \times W } = \langle v_1, v_2 \rangle_V + \langle w_1, w_2 \rangle_W $.
Finally, for a Borel measurable set $ A \in \cB( \R ) $ we denote by $ \lambda_A \colon \cB( A ) \to [ 0, \infty ] $ the Lebesgue-Borel measure on $ A $.

\subsection{Setting}\label{subsec:setting}

Let $ ( U , \langle \cdot , \cdot \rangle_{ U } , \norm{\cdot}_{ U } ) $ be a separable $ \R $-Hilbert space, let $ \bbU \subseteq U $ be an orthonormal basis of $ U $, let $ T \in ( 0 , \infty ) $, let $ ( \Omega , \cF , \P ) $ be a probability space with a normal filtration $ ( \cF_t )_{ t \in [ 0 , T ] } $, and let $ ( W_t )_{ t \in [ 0 , T ] } $ be an $ \id_U $-cylindrical $ ( \cF_t )_{ t \in [ 0 , T ] } $-Wiener process.

\section{Preliminaries}

\subsection{Existence, uniqueness, and regularity results for stochastic evolution equations} \label{subsec:preliminaries}

Theorem~\ref{thm:existence} below is a direct consequence of Theorem~7.4 in Da~Prato \& Zabczyk~\cite{DaPratoZabczyk1992}.

\begin{theorem} \label{thm:existence}
Assume the setting in Section~\ref{subsec:setting}, let $ ( H , \langle \cdot, \cdot \rangle_{ H } , \norm{\cdot}_{ H } ) $ be a separable $ \R $-Hilbert space, let $ S \colon [ 0, \infty ) \to L(H) $ be a strongly continuous semigroup, and let $ p \in [2, \infty) $, $ F \in \mathrm{Lip}^0( H, H ) $, $ B \in \mathrm{Lip}^0( H, L_2( U, H ) ) $, $ \xi \in L^p( \P\vert_{\cF_0}; H ) $. Then there exists an up to modifications unique $ ( \cF_t )_{t \in [0,T]} $-predictable stochastic process $ X \colon [0,T] \times \Omega \to H $ such that for all $ t \in [0,T] $ it holds that $ \sup_{ s \in [0,T] } \norm{ X_s }_{ L^p( \P; H ) } < \infty $ and $ \P $-a.s.\ that
\begin{equation}
X_t = S_t \xi + \int_0^t S_{t-s} F( X_s ) \ds + \int_0^t S_{t-s} B( X_s ) \dWs.
\end{equation}
\end{theorem}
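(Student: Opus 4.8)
The plan is to verify that the hypotheses of Theorem~7.4 in Da~Prato \& Zabczyk~\cite{DaPratoZabczyk1992} are satisfied and then to invoke that result; I now outline the standard Banach fixed point argument underlying it. First I would fix the complete metric space on which to iterate. For $ \rho \in ( 0, \infty ) $ let $ \mathcal{H}_{p,\rho} $ denote the $ \R $-vector space of all (equivalence classes up to modifications of) $ ( \cF_t )_{ t \in [0,T] } $-predictable stochastic processes $ Y \colon [0,T] \times \Omega \to H $ equipped with the norm $ \norm{Y} = \sup_{ t \in [0,T] } \e^{ -\rho t } \norm{ Y_t }_{ L^p( \P; H ) } $, and note that $ ( \mathcal{H}_{p,\rho}, \norm{\cdot} ) $ is a Banach space for every $ \rho $. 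On this space define the map $ \Phi $ by $ ( \Phi Y )_t = S_t \xi + \int_0^t S_{t-s} F( Y_s ) \ds + \int_0^t S_{t-s} B( Y_s ) \dWs $ for $ t \in [0,T] $; a fixed point of $ \Phi $ is precisely a mild solution.

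The key analytic input is that $ S $, being strongly continuous on $ [ 0, \infty ) $, is uniformly bounded on the compact interval $ [0,T] $, so that $ M := \sup_{ t \in [0,T] } \norm{ S_t }_{ L(H) } < \infty $. Using this together with the linear growth of $ F $ and $ B $ implied by $ F \in \mathrm{Lip}^0( H, H ) $ and $ B \in \mathrm{Lip}^0( H, L_2( U, H ) ) $, I would show that $ \Phi $ maps $ \mathcal{H}_{p,\rho} $ into itself. For the drift term this is a direct application of Minkowski's integral inequality; for the stochastic convolution term it requires the Burkholder--Davis--Gundy-type inequality for $ H $-valued stochastic integrals (valid since $ p \geq 2 $), which converts the $ L^p $-norm of $ \int_0^t S_{t-s} B( Y_s ) \dWs $ into the $ L^p $-norm of $ \bigl( \int_0^t \norm{ S_{t-s} B( Y_s ) }_{ L_2( U, H ) }^2 \ds \bigr)^{1/2} $, which is finite by the growth bound on $ B $ and the boundedness of $ S $.

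The contraction step proceeds analogously: for $ Y, \tilde{Y} \in \mathcal{H}_{p,\rho} $ the global Lipschitz constants $ \abs{F}_{ \mathrm{Lip}^0( H, H ) } $ and $ \abs{B}_{ \mathrm{Lip}^0( H, L_2( U, H ) ) } $, together with the bound $ M $ and Minkowski's integral inequality, control $ \norm{ ( \Phi Y )_t - ( \Phi \tilde{Y} )_t }_{ L^p( \P; H ) }^2 $ by a constant (depending only on $ M $, $ p $, and the Lipschitz constants) times $ \int_0^t \norm{ Y_s - \tilde{Y}_s }_{ L^p( \P; H ) }^2 \ds $. Multiplying by $ \e^{ -2\rho t } $ and estimating $ \int_0^t \e^{ -2\rho(t-s) } \ds \leq \tfrac{1}{2\rho} $ shows that the Lipschitz constant of $ \Phi $ on $ ( \mathcal{H}_{p,\rho}, \norm{\cdot} ) $ is bounded by a constant multiple of $ \rho^{ -1/2 } $, hence tends to $ 0 $ as $ \rho \to \infty $. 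Choosing $ \rho $ large enough makes $ \Phi $ a strict contraction, and Banach's fixed point theorem yields a unique $ X \in \mathcal{H}_{p,\rho} $ with $ \Phi X = X $; since the norms $ \norm{\cdot} $ for different $ \rho $ are equivalent, both existence and the asserted up-to-modifications uniqueness among predictable processes with $ \sup_{ s \in [0,T] } \norm{ X_s }_{ L^p( \P; H ) } < \infty $ follow.

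The main obstacle is the careful treatment of the stochastic convolution: one must ensure that $ t \mapsto \int_0^t S_{t-s} B( Y_s ) \dWs $ admits a predictable modification lying in $ \mathcal{H}_{p,\rho} $, and that the Burkholder--Davis--Gundy estimate applies uniformly in $ t $ despite the $ t $-dependence of the integrand through the semigroup. This is precisely the content handled by the factorization method in Da~Prato \& Zabczyk~\cite{DaPratoZabczyk1992}, and since all of the hypotheses of their Theorem~7.4 --- strong continuity of $ S $, the global Lipschitz (hence linear growth) conditions on $ F $ and $ B $, and $ \xi \in L^p( \P\vert_{\cF_0}; H ) $ with $ p \geq 2 $ --- hold here by assumption, the cleanest route is to cite that result directly.
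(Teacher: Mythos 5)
Your proposal is correct and takes essentially the same route as the paper: the paper's entire proof consists of the observation that the statement is a direct consequence of Theorem~7.4 in Da~Prato \& Zabczyk~\cite{DaPratoZabczyk1992}, which is exactly the citation you invoke after verifying its hypotheses (strong continuity of $S$, global Lipschitz conditions on $F$ and $B$, and $\xi \in L^p(\P\vert_{\cF_0};H)$ with $p \geq 2$). Your additional sketch of the weighted-norm Banach fixed point argument underlying that theorem is accurate but is a supplement rather than a divergence, since the final step in both cases is the same appeal to the cited result.
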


\begin{remark} \label{rem:existence}
Assume the setting in Section~\ref{subsec:setting}, let $ ( H , \langle \cdot, \cdot \rangle_{ H } , \norm{\cdot}_{ H } ) $ be a separable $ \R $-Hilbert space, let $ S \colon [ 0, \infty ) \to L(H) $ be a strongly continuous semigroup, and let $ F \in \mathrm{Lip}^0( H, H ) $, $ B \in \mathrm{Lip}^0( H, L_2( U, H ) ) $. Then Theorem~\ref{thm:existence} shows that there exist up to modifications unique $ ( \cF_t )_{t \in [0,T]} $-predictable stochastic processes $ X^x \colon [0,T] \times \Omega \to H $, $ x \in H $, such that for all $ x \in H $, $ t \in [0,T] $, $ p \in [2, \infty) $ it holds that $ \sup_{ s \in [0,T] } \norm{ X_s^x }_{ L^p( \P; H ) } < \infty $ and $ \P $-a.s.\ that
\begin{equation}
X_t^x = S_t x + \int_0^t S_{t-s} F( X_s^x ) \ds + \int_0^t S_{t-s} B( X_s^x ) \dWs.
\end{equation}
\end{remark}

\begin{lemma} \label{lem:kolmogorov}
Assume the setting in Section~\ref{subsec:setting}, let $ ( H , \langle \cdot, \cdot \rangle_{ H } , \norm{\cdot}_{ H } ) $ be a finite-dimensional $ \R $-vector space, let $ A \in L(H) $, $ F \in C_\mathrm{b}^2( H, H ) $, $ B \in C_\mathrm{b}^2( H, L_2( U, H ) ) $, $ \phi \in C_\mathrm{b}^2( H, \R ) $, let $ X^x \colon [0,T] \times \Omega \to H $, $ x \in H $, be $ ( \cF_t )_{t \in [0,T]} $-predictable stochastic processes satisfying that for all $ x \in H $, $ t \in [0,T] $ it holds that $ \sup_{ s \in [0,T] } \norm{ X_s^x }_{ L^2( \P; H ) } < \infty $ and $ \P $-a.s.\ that
\begin{equation}
X_t^x = \e^{ At } x + \int_0^t \e^{ A(t-s) }F( X_s^x ) \ds + \int_0^t \e^{ A(t-s) } B( X_s^x ) \dWs,
\end{equation}
and let $ u \colon [0,T] \times H \to \R $ be the mapping with the property that for all $ t \in [0,T] $, $ x \in H $ it holds that $ u(t,x) = \E[ \phi( X_t^x ) ] $. Then
\begin{itemize}
\item[(i)]
it holds that $ u \in C^{ 1 , 2 }( [0,T] \times H, \R ) $,
\item[(ii)]
it holds for all $ (t,x) \in [0,T] \times H $ that
\begin{equation}
\bigl( \tfrac{\partial}{\partial t} u \bigr) (t,x) = \bigl( \tfrac{\partial}{\partial x} u \bigr) (t,x) [ A x + F(x) ] + \tfrac{1}{2} \sum_{ u \in \bbU } \bigl( \tfrac{\partial^2}{\partial x^2} u \bigr) (t,x)( B(x)u, B(x)u ),
\end{equation}
\item[(iii)]
and it holds that
\begin{align}
& \begin{aligned}
\sup_{t \in [0,T]} \abs{ u( t, \cdot ) }_{ C_\mathrm{b}^1( H, \R ) } & \leq \abs{ \phi }_{ C_{ \mathrm{b} }^1( H, \R ) } \biggl[ \sup_{ s \in [0,T]} \norm[\big]{ \e^{ As } }_{ L(H) } \biggr] \\
& \quad \cdot \exp \biggl( T \bigl[ \abs{ F }_{ C_{ \mathrm{b} }^1( H, H ) } + \tfrac{1}{2} \abs{ B }_{ C_{ \mathrm{b} }^1( H, L_2( U,H ) ) }^2 \bigr] \sup_{ s \in [0,T]} \norm[\big]{ \e^{ As } }_{ L(H) }^2 \biggr) < \infty,
\end{aligned} \\
& \begin{aligned}
& \sup_{t \in [0,T]} \abs{ u( t, \cdot ) }_{ C_\mathrm{b}^2( H, \R ) } \\
& \leq \norm{ \phi }_{ C_{ \mathrm{b} }^2( H, \R ) } \Bigl( 1 \vee \Bigl[ T \bigl( \abs{ F }_{ C_{ \mathrm{b} }^2( H, H ) }^2 + 2 \abs{ B }_{ C_{ \mathrm{b} }^2( H, L_2( U,H ) ) }^2 \bigr) \Bigr]^{ \nicefrac{1}{2} } \Bigr) \biggl[ \sup_{ s \in [0,T]} \norm[\big]{ \e^{ As } }_{ L(H) }^3 \biggr] \\
& \quad \cdot \exp \biggl( T \bigl[ \tfrac{1}{2} + 3 \abs{ F }_{ C_{ \mathrm{b} }^1( H, H ) } + 4 \abs{ B }_{ C_{ \mathrm{b} }^1( H, L_2( U,H ) ) }^2 \bigr] \sup_{ s \in [0,T]} \norm[\big]{ \e^{ As } }_{ L(H) }^4 \biggr) < \infty.
\end{aligned}
\end{align}
\end{itemize}
\end{lemma}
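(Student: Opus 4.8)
The plan is to reduce everything to classical facts about finite-dimensional It\^o diffusions. Since $H$ is finite-dimensional and $A\in L(H)$, the mild equation for $X^x$ is equivalent to the strong SDE $\dd X_t^x = [A X_t^x + F(X_t^x)]\dt + B(X_t^x)\dWt$ with $X_0^x = x$. First I would invoke the standard theory of differentiable dependence of SDE solutions on their initial data: because $F\in C_\mathrm{b}^2(H,H)$ and $B\in C_\mathrm{b}^2(H,L_2(U,H))$ have bounded first and second derivatives, the mapping $x\mapsto X_t^x$ is, up to modifications, twice continuously differentiable (in the mean-square sense), and the first- and second-derivative processes $Y_t^{x,v} := (\tfrac{\partial}{\partial x}X_t^x)v$ and $Z_t^{x,(v_1,v_2)} := (\tfrac{\partial^2}{\partial x^2}X_t^x)(v_1,v_2)$ solve the linearised SDEs obtained by formally differentiating the equation in $x$ (finite-dimensional linear SDEs with the bounded adapted coefficients $F'(X^x),B'(X^x)$ and, for $Z$, inhomogeneities built from $F''(X^x),B''(X^x)$ and $Y$; note that these coefficients are bounded irrespective of the moments of $X^x$). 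Differentiating $u(t,x)=\E[\phi(X_t^x)]$ under the expectation --- justified by dominated convergence using $\phi\in C_\mathrm{b}^2$ together with the moment bounds established below --- then gives $(\tfrac{\partial}{\partial x}u)(t,x)v = \E[\phi'(X_t^x)Y_t^{x,v}]$ and $(\tfrac{\partial^2}{\partial x^2}u)(t,x)(v_1,v_2) = \E[\phi''(X_t^x)(Y_t^{x,v_1},Y_t^{x,v_2})] + \E[\phi'(X_t^x)Z_t^{x,(v_1,v_2)}]$.

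The estimates in (iii) are the heart of the matter and, I expect, the main obstacle, because the constants must be expressed through $M := \sup_{s\in[0,T]}\norm{\e^{As}}_{L(H)}$ and not through $\norm{A}_{L(H)}$. For this reason I would estimate $Y$ and $Z$ from their mild (variation-of-constants) forms rather than from It\^o's formula applied to $\norm{Y_t}_H^2$, which would instead involve $\norm{A}_{L(H)}$. The triangle inequality in $L^2(\P;H)$, Minkowski's integral inequality on the drift term, and It\^o's isometry on the diffusion term yield
\[ \norm{Y_t^{x,v}}_{L^2(\P;H)} \le M\norm{v}_H + M\abs{F}_{C_\mathrm{b}^1(H,H)}\!\int_0^t\!\norm{Y_s^{x,v}}_{L^2(\P;H)}\ds + M\abs{B}_{C_\mathrm{b}^1(H,L_2(U,H))}\Bigl(\int_0^t\norm{Y_s^{x,v}}_{L^2(\P;H)}^2\ds\Bigr)^{\!\nicefrac{1}{2}}. \]
A Gronwall-type inequality for relations of the form $\psi(t)\le a + b\int_0^t\psi\,\ds + c\,(\int_0^t\psi^2\,\ds)^{\nicefrac{1}{2}}$ then gives $\norm{Y_t^{x,v}}_{L^2(\P;H)}\le M\norm{v}_H\exp(t[M\abs{F}_{C_\mathrm{b}^1(H,H)} + \tfrac12 M^2\abs{B}_{C_\mathrm{b}^1(H,L_2(U,H))}^2])$. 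Since $M\ge\norm{\id_H}_{L(H)}=1$, bounding $M\le M^2$ in the exponent and inserting this into $(\tfrac{\partial}{\partial x}u)(t,x)v=\E[\phi'(X_t^x)Y_t^{x,v}]$ via $\abs{(\tfrac{\partial}{\partial x}u)(t,x)v}\le\abs{\phi}_{C_\mathrm{b}^1(H,\R)}\norm{Y_t^{x,v}}_{L^2(\P;H)}$ produces exactly the asserted $C_\mathrm{b}^1$ bound.

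For the $C_\mathrm{b}^2$ bound I would argue analogously but additionally control the inhomogeneous linear equation for $Z$. Its source terms $F''(X^x)(Y^{v_1},Y^{v_2})$ and $B''(X^x)(Y^{v_1},Y^{v_2})$ require, via Cauchy--Schwarz in $L^2(\P)$, the corresponding product (fourth-moment) bounds on $Y$, which I would again derive from the mild form --- now using the Burkholder--Davis--Gundy inequality for the higher $L^p(\P;H)$-norm of the stochastic integral --- and the same Gronwall lemma; this is where the coefficients $\tfrac12$, $3\abs{F}_{C_\mathrm{b}^1(H,H)}$, $4\abs{B}_{C_\mathrm{b}^1(H,L_2(U,H))}^2$ and the powers $M^3,M^4$ in the statement originate. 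Feeding the resulting bounds for $\norm{Y^{x,v_i}}$ and $\norm{Z^{x,(v_1,v_2)}}$ into $(\tfrac{\partial^2}{\partial x^2}u)(t,x)(v_1,v_2)=\E[\phi''(X_t^x)(Y^{v_1},Y^{v_2})]+\E[\phi'(X_t^x)Z^{(v_1,v_2)}]$ and using $\abs{\phi''}\le\abs{\phi}_{C_\mathrm{b}^2}$, $\abs{\phi'}\le\abs{\phi}_{C_\mathrm{b}^2}$ gives (iii): the $\phi''$-term contributes the ``$1$'' in the prefactor $1\vee[T(\abs{F}_{C_\mathrm{b}^2(H,H)}^2+2\abs{B}_{C_\mathrm{b}^2(H,L_2(U,H))}^2)]^{\nicefrac{1}{2}}$, while the $\phi'$-$Z$-term contributes the second-derivative factor. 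The conceptual content here is small; the difficulty is entirely in arranging the Gronwall arguments so that $A$ enters only through $M$ and the stated numerical factors come out cleanly.

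Finally, for (i) and (ii) I would derive the Kolmogorov equation from the Markov (flow) property together with It\^o's formula. Using $u(t+h,x)=\E[u(t,X_h^x)]$ and applying It\^o's formula to $s\mapsto u(t,X_s^x)$ --- legitimate since $u(t,\cdot)\in C^2(H,\R)$ by the previous steps and $X^x$ is a continuous It\^o process with finite moments --- and taking expectations to remove the martingale part yields
\[ u(t+h,x)-u(t,x)=\E\!\int_0^h (\mathcal{L}u(t,\cdot))(X_s^x)\ds, \qquad (\mathcal{L}g)(y):=g'(y)[Ay+F(y)]+\tfrac12\!\sum_{u\in\bbU}\! g''(y)(B(y)u,B(y)u). \]
Dividing by $h$ and letting $h\to0$, continuity of $s\mapsto\E[(\mathcal{L}u(t,\cdot))(X_s^x)]$ at $s=0$ gives $(\tfrac{\partial}{\partial t}u)(t,x)=(\mathcal{L}u(t,\cdot))(x)$, which is (ii). Joint continuity of $\tfrac{\partial}{\partial x}u$, $\tfrac{\partial^2}{\partial x^2}u$ and $\tfrac{\partial}{\partial t}u$ in $(t,x)$ --- hence $u\in C^{1,2}([0,T]\times H,\R)$, i.e.\ (i) --- then follows from continuous dependence of $X^x,Y^{x,v},Z^{x,(v_1,v_2)}$ on $(t,x)$ together with dominated convergence.
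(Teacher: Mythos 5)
Your overall architecture coincides with the paper's: reduce to the finite\--dimensional SDE, introduce the first and second variation processes $Y^{x,v}$, $Z^{x,(v_1,v_2)}$ solving the linearised equations, represent $\tfrac{\partial}{\partial x}u$ and $\tfrac{\partial^2}{\partial x^2}u$ by $\E[\phi'(X_t^x)Y_t^{x,v}]$ and $\E[\phi''(X_t^x)(Y_t^{x,v_1},Y_t^{x,v_2})+\phi'(X_t^x)Z_t^{x,(v_1,v_2)}]$ (the paper records these facts as well known, items (i) and (ii) included), and then estimate $Y$ and $Z$. The gap is exactly in the step you call the heart of the matter. The ``Gronwall-type inequality'' you invoke is false: from
\begin{equation*}
\psi(t)\;\le\; a + b\int_0^t\psi(s)\,\ds + c\,\Bigl(\int_0^t \abs{\psi(s)}^2\,\ds\Bigr)^{\!\nicefrac{1}{2}}
\end{equation*}
one cannot conclude $\psi(t)\le a\exp\bigl(t\bigl[b+\nicefrac{c^2}{2}\bigr]\bigr)$. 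Take $a=c=1$, $b=0$, $\psi(t)=1+\sqrt{t}$: the hypothesis holds for all $t\ge 0$ because $\int_0^t(1+\sqrt{s})^2\,\ds\ge t$, yet at $t=1$ the claimed bound would give $2=\psi(1)\le e^{\nicefrac{1}{2}}\approx 1.65$. The failure is structural, not a matter of finding a better lemma: the square-root term allows $\psi$ to grow like $a+c\,a\sqrt{t}$ near $t=0$, which exceeds $a\e^{Kt}=a(1+Kt+O(t^2))$ for every $K$, so \emph{no} bound with prefactor exactly $a$ follows from this hypothesis. Correct lemmas for such mixed inequalities (e.g.\ Corollary~3.1 in Jentzen \& Kurniawan~\cite{JentzenKurniawan2015}, which is precisely what the paper's Lemma~\ref{lem:perturbation} uses) necessarily carry a prefactor $\sqrt{2}\,\cE_1[\cdot]>1$. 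Hence your route through the mild integral form plus triangle inequality/It\^{o} isometry cannot produce the exact constants asserted in (iii); the same defect recurs, aggravated by Burkholder--Davis--Gundy constants, in your fourth-moment bound for $Y$ and in the treatment of $Z$.

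Your instinct that the standard It\^{o} formula applied to $\norm{Y_t}_H^2$ is unusable (it brings in $\norm{A}_{L(H)}$ through $\langle Y_t,AY_t\rangle_H$) is correct, but the fix is not to retreat to the mild integral form; it is the \emph{mild It\^{o} formula} (Corollary~1 in Da~Prato et al.~\cite{DaPratoJentzenRoeckner2010}), which is what the paper uses. Applying it to the test functions $\psi_p(x)=\norm{x}_H^p$ and the processes $X^{x,v}$, $X^{x,v_1,v_2}$ yields an \emph{identity} for $\E[\norm{X_t^{x,v}}_H^p]$ in which the martingale part vanishes and the drift and quadratic-variation contributions enter linearly in $\int_0^t\E[\norm{X_s^{x,v}}_H^p]\,\ds$, weighted only by powers of $M=\sup_{s\in[0,T]}\norm{\e^{As}}_{L(H)}$ --- so $A$ enters only through $M$ \emph{and} no square-root term appears. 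A standard linear Gronwall argument then gives
\begin{equation*}
\sup_{t\in[0,T]}\norm[\big]{X_t^{x,v}}_{L^p(\P;H)}\le \norm{v}_H\, M \exp\Bigl( T M^p \bigl[\abs{F}_{C_\mathrm{b}^1(H,H)}+\tfrac{p-1}{2}\abs{B}_{C_\mathrm{b}^1(H,L_2(U,H))}^2\bigr]\Bigr)
\end{equation*}
with clean constants (the case $p=2$ gives the $C_\mathrm{b}^1$ bound, the case $p=4$ feeds the source terms of $Z$), and the analogous second-moment identity for $X^{x,v_1,v_2}$, combined with Cauchy--Schwarz and $ab\le\tfrac{a^2+b^2}{2}$, delivers the $C_\mathrm{b}^2$ bound exactly as stated. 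Without replacing your Gronwall step by this moment-level argument, part (iii) remains unproven.
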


\begin{proof}[Proof of Lemma~\ref{lem:kolmogorov}]
It is well-known that the assumptions that $ \phi \in C_\mathrm{b}^2( H, \R ) $, $ F \in C_\mathrm{b}^2( H, H ) $, $ B \in C_\mathrm{b}^2( H, L_2( U, H ) ) $ imply that (i) and (ii) hold, that there exist up to modifications unique $ ( \cF_t )_{ t \in [0,T] } $-predictable stochastic processes $ X^{ x, v_1}, X^{ x, v_1, v_2} \colon [0,T] \times \Omega \to H $, $ x, v_1, v_2 \in H $, satisfying for all $ x, v_1, v_2 \in H $, $ t \in [0,T] $, $ p \in [2, \infty) $ that $ \sup_{ s \in [0,T] } \bigl( \norm{ X_s^{ x, v_1 } }_{ L^p( \P; H ) } +  \norm{ X_s^{ x, v_1, v_2 } }_{ L^p( \P; H ) } \bigr) < \infty $ and $ \P $-a.s.\ that
\begin{align}
X_t^{ x, v_1 } &= \e^{ At }v_1 + \int_{0}^{t} \e^{ A(t-s) } F'( X_s^x ) X_s^{ x, v_1 } \ds + \int_{0}^{t} \e^{ A(t-s) } B'( X_s^x ) X_s^{ x, v_1 } \dWs, \\
\begin{split}
X_t^{ x, v_1, v_2 } &= \int_{0}^{t} \e^{ A(t-s) } \bigl( F''(X_s^x) ( X_s^{ x, v_1 }, X_s^{ x, v_2} ) + F'(X_s^x) X_s^{ x, v_1, v_2 } \bigr) \ds \\
& \quad + \int_{0}^{t} \e^{ A(t-s) } \bigl( B''(X_s^x) ( X_s^{ x, v_1 }, X_s^{ x, v_2} ) + B'(X_s^x) X_s^{ x, v_1, v_2 } \bigr) \dWs,
\end{split}
\end{align}
and that for all $ (t,x) \in [0,T] \times H $, $ v_1, v_2 \in H $ it holds that
\begin{align}
\bigl( \tfrac{\partial}{\partial x} u \bigr) (t,x) v_1 & = \E \bigl[ \phi'( X_t^x ) X_t^{ x, v_1 } \bigr], \label{eq:kolmogorov,5} \\
\bigl( \tfrac{\partial^2}{\partial x^2 } u \bigr) (t,x) ( v_1, v_2 ) & = \E \bigl[  \phi''( X_t^x ) ( X_t^{ x, v_1 }, X_t^{ x, v_2 } ) + \phi'( X_t^x ) X_t^{ x, v_1, v_2 } \bigr]. \label{eq:kolmogorov,6}
\end{align}
It thus remains to prove (iii). For this let $ \psi_p \colon H \to \R $, $ p \in [2, \infty) $, be the functions satisfying for all $ p \in [2, \infty) $, $ x \in H $ that $ \psi_p(x) = \norm{ x }_H^p $. Then note for all $ p \in [2, \infty) $, $ x, v_1, v_2 \in H $ that $ \psi_p \in C^2( H, \R ) $ and that
\begin{align}
\psi_p' (x) v_1 &= \begin{cases}
0 & \colon x = 0, \\
p \norm{ x }^{ p-2 } \langle x, v_1 \rangle_H & \colon x \neq 0,
\end{cases} \\
\begin{split}
\psi_p'' (x) (v_1, v_2) &= \begin{cases}
2 \langle v_1, v_2 \rangle_H & \colon p=2, \\
0 & \colon ( p \neq 2 ) \wedge (x = 0), \\
p \norm{ x }_H^{ p-2 } \langle v_1, v_2 \rangle_H + p(p-2) \norm{ x }_H^{ p-4 } \langle x, v_1 \rangle_H \langle x, v_2 \rangle_H & \colon x \neq 0.
\end{cases}
\end{split}
\end{align}
An application of the mild It\^{o} formula in Corollary~1 in Da~Prato et al.~\cite{DaPratoJentzenRoeckner2010} on the test functions $ \psi_p $, $ p \in [2, \infty) $, and the Cauchy-Schwarz inequality hence yield for all $ p \in [2, \infty) $, $ x, v \in H $, $ t \in [0,T] $ that
\begin{equation} \label{eq:kolmogorov,3}
\begin{split}
& \E \bigl[ \norm{ X_t^{ x, v } }_H^p \bigr] = \E \bigl[ \psi_p( X_t^{ x, v } ) \bigr] \\
& = \psi_p \bigl( \e^{ At } v \bigr) + \int_{0}^{t} \E \bigl[ \psi_p' \bigl( \e^{ A(t-s) } X_s^{ x, v } ) \e^{ A(t-s) } F'( X_s^x ) X_s^{ x, v } \bigr] \ds \\
& \quad + \frac{1}{2} \sum_{ u \in \bbU } \int_{0}^{t} \E \bigl[ \psi_p'' \bigl( \e^{ A(t-s) } X_s^{ x, v } \bigr) \bigl( \e^{ A(t-s) } \bigl( B'( X_s^x ) X_s^{ x, v } \bigr) u, \e^{ A(t-s) } \bigl( B'( X_s^x ) X_s^{ x, v } \bigr) u \bigr) \bigr] \ds \\
& \leq \norm{ v }_H^p \biggl[ \sup_{ s \in [0,T]} \norm[\big]{ \e^{ As } }_{ L(H) }^p \biggr] + p \biggl[ \sup_{ s \in [0,T]} \norm[\big]{ \e^{ As } }_{ L(H) }^p \biggr] \abs{ F }_{ C_{ \mathrm{b} }^1( H, H ) } \int_{0}^{t} \E \bigl[ \norm{ X_s^{ x, v } }_H^p \bigr] \ds \\
& \quad + \tfrac{p}{2} \biggl[ \sup_{ s \in [0,T]} \norm[\big]{ \e^{ As } }_{ L(H) }^p \biggr] \abs{ B }_{ C_{ \mathrm{b} }^1( H, L_2( U,H ) ) }^2 \int_{0}^{t} \E \bigl[ \norm{ X_s^{ x, v } }_H^p \bigr] \ds \\
& \quad + \tfrac{p(p-2)}{2}\biggl[ \sup_{ s \in [0,T]} \norm[\big]{ \e^{ As } }_{ L(H) }^p \biggr] \abs{ B }_{ C_{ \mathrm{b} }^1( H, L_2( U,H ) ) }^2 \int_{0}^{t} \E \bigl[ \norm{ X_s^{ x, v } }_H^p \bigr] \ds \\
& = \norm{ v }_H^p \biggl[ \sup_{ s \in [0,T]} \norm[\big]{ \e^{ As } }_{ L(H) }^p \biggr] \\
& \quad + p \biggl[ \sup_{ s \in [0,T]} \norm[\big]{ \e^{ As } }_{ L(H) }^p \biggr] \bigl( \abs{ F }_{ C_{ \mathrm{b} }^1( H, H ) } + \tfrac{p-1}{2} \abs{ B }_{ C_{ \mathrm{b} }^1( H, L_2( U,H ) ) }^2 \bigr) \int_{0}^{t} \E \bigl[ \norm{ X_s^{ x, v } }_H^p \bigr] \ds.
\end{split}
\end{equation}
Therefore, Gronwall's lemma shows for all $ p \in [2, \infty) $, $ x, v \in H $ that
\begin{equation} \label{eq:kolmogorov,1}
\begin{split}
& \sup_{ t \in [0,T] } \norm{ X_t^{ x, v } }_{ L^p( \P; H ) } \\
& \leq \norm{ v }_H  \biggl[ \sup_{ s \in [0,T]} \norm[\big]{ \e^{ As } }_{ L(H) } \biggr] \exp \biggl( T \bigl[ \abs{ F }_{ C_{ \mathrm{b} }^1( H, H ) } + \tfrac{p-1}{2} \abs{ B }_{ C_{ \mathrm{b} }^1( H, L_2( U,H ) ) }^2 \bigr] \sup_{ s \in [0,T]} \norm[\big]{ \e^{ As } }_{ L(H) }^p \biggr).
\end{split}
\end{equation}
Furthermore, applying again Corollary~1 in Da~Prato et al.~\cite{DaPratoJentzenRoeckner2010} on the test function $ \psi_2 $, the Cauchy-Schwarz inequality, and the fact that $ \forall \tp a, b \in \R $ it holds that $ a b \leq \tfrac{a^2 + b^2}{2} $ imply for all $ x, v_1, v_2 \in H $, $ t \in [0,T] $ that
\begin{equation} \label{eq:kolmogorov,4}
\begin{split}
& \E \bigl[ \norm{ X_t^{ x, v_1, v_2 } }_H^2 \bigr] \\
& = 2 \int_{0}^{t} \E \bigl[ \bigl\langle \e^{ A(t-s) } X_s^{ x, v_1, v_2 } ,  \e^{ A(t-s) } \bigl( F''(X_s^x) ( X_s^{ x, v_1 }, X_s^{ x, v_2} ) + F'(X_s^x) X_s^{ x, v_1, v_2 } \bigr) \bigr\rangle_H \bigr] \ds \\
& \quad + \int_{0}^{t} \E \bigl[ \norm[\big]{ \e^{ A(t-s) } \bigl( B''(X_s^x) ( X_s^{ x, v_1 }, X_s^{ x, v_2} ) + B'(X_s^x) X_s^{ x, v_1, v_2 } \bigr) }_{ L_2( U, H) }^2 \bigr] \ds \\
& \leq \int_{0}^{t} \norm[\big]{ \e^{ A(t-s) } F''(X_s^x) ( X_s^{ x, v_1 }, X_s^{ x, v_2} ) }_{ L^2( \P; H ) }^2 + \norm[\big]{ \e^{ A(t-s) } X_s^{ x, v_1, v_2 } }_{ L^2( \P; H ) }^2 \ds \\
& \quad + 2 \biggl[ \sup_{ s \in [0,T]} \norm[\big]{ \e^{ As } }_{ L(H) }^2 \biggr] \abs{ F }_{ C_{ \mathrm{b} }^1( H, H ) } \int_{0}^{t} \E \bigl[ \norm{ X_s^{ x, v_1, v_2 } }_H^2 \bigr] \ds \\
& \quad + 2 \int_{0}^{t} \E \bigl[ \norm[\big]{ \e^{ A(t-s) } B''(X_s^x) ( X_s^{ x, v_1 }, X_s^{ x, v_2} ) }_{ L_2( U, H) }^2 +  \norm[\big]{ \e^{ A(t-s) } B'(X_s^x) X_s^{ x, v_1, v_2 } }_{ L_2( U, H) }^2 \bigr] \ds \\
& \leq \biggl[ \sup_{s \in [0,T ] } \norm[\big]{ X_s^{ x, v_1 } }_{ L^4( \P; H ) }^2 \norm[\big]{ X_s^{ x, v_2 } }_{ L^4( \P; H ) }^2 \biggr]T \bigl( \abs{ F }_{ C_{ \mathrm{b} }^2( H, H ) }^2 + 2 \abs{ B }_{ C_{ \mathrm{b} }^2( H, L_2( U,H ) ) }^2 \bigr) \biggl[ \sup_{ s \in [0,T]} \norm[\big]{ \e^{ As } }_{ L(H) }^2 \biggr] \\
& \quad + 2 \biggl[ \sup_{ s \in [0,T]} \norm[\big]{ \e^{ As } }_{ L(H) }^2 \biggr] \bigl( \tfrac{1}{2} + \abs{ F }_{ C_{ \mathrm{b} }^1( H, H ) } + \abs{ B }_{ C_{ \mathrm{b} }^1( H, L_2( U,H ) ) }^2 \bigr) \int_{0}^{t} \E \bigl[ \norm{ X_s^{ x, v_1, v_2 } }_H^2 \bigr] \ds.
\end{split}
\end{equation}
Gronwall's lemma and \eqref{eq:kolmogorov,1}, therefore, imply for all $ x, v_1, v_2 \in H $ that
\begin{equation} \label{eq:kolmogorov,2}
\begin{split}
& \sup_{ t \in [0,T] } \norm{ X_t^{ x, v_1, v_2 } }_{ L^2( \P; H ) } \\
& \leq \biggl[ \sup_{s \in [0,T ] } \norm[\big]{ X_s^{ x, v_1 } }_{ L^4( \P; H ) } \norm[\big]{ X_s^{ x, v_2 } }_{ L^4( \P; H ) } \biggr] \Bigl[ T \bigl( \abs{ F }_{ C_{ \mathrm{b} }^2( H, H ) }^2 + 2 \abs{ B }_{ C_{ \mathrm{b} }^2( H, L_2( U,H ) ) }^2 \bigr) \Bigr]^{ \nicefrac{1}{2} } \\
& \quad \cdot \biggl[ \sup_{ s \in [0,T]} \norm[\big]{ \e^{ As } }_{ L(H) } \biggr] \exp \biggl( T \bigl[ \tfrac{1}{2} + \abs{ F }_{ C_{ \mathrm{b} }^1( H, H ) } + \abs{ B }_{ C_{ \mathrm{b} }^1( H, L_2( U,H ) ) }^2 \bigr] \sup_{ s \in [0,T]} \norm[\big]{ \e^{ As } }_{ L(H) }^2 \biggr) \\
& \leq \norm{ v_1 }_H  \norm{ v_2 }_H \Bigl[ T \bigl( \abs{ F }_{ C_{ \mathrm{b} }^2( H, H ) }^2 + 2 \abs{ B }_{ C_{ \mathrm{b} }^2( H, L_2( U,H ) ) }^2 \bigr) \Bigr]^{ \nicefrac{1}{2} } \biggl[ \sup_{ s \in [0,T]} \norm[\big]{ \e^{ As } }_{ L(H) }^3 \biggr] \\
& \quad \cdot \exp \biggl( T \bigl[ \tfrac{1}{2} + 3 \abs{ F }_{ C_{ \mathrm{b} }^1( H, H ) } + 4 \abs{ B }_{ C_{ \mathrm{b} }^1( H, L_2( U,H ) ) }^2 \bigr] \sup_{ s \in [0,T]} \norm[\big]{ \e^{ As } }_{ L(H) }^4 \biggr).
\end{split}
\end{equation}
Next note that \eqref{eq:kolmogorov,5}, \eqref{eq:kolmogorov,6}, \eqref{eq:kolmogorov,1}, and \eqref{eq:kolmogorov,2} ensure for all $ (t,x) \in [0,T] \times H $, $ v_1, v_2 \in H $ that
\begin{equation}
\begin{split}
\abs[\big]{ \bigl( \tfrac{\partial}{\partial x} u \bigr) (t,x) v_1 } & = \abs[\big]{ \E \bigl[ \phi'( X_t^x ) X_t^{ x, v_1 } \bigr] } \leq \abs{ \phi }_{ C_{ \mathrm{b} }^1( H, \R ) } \E \bigl[ \norm[\big]{ X_t^{ x, v_1 } }_H \bigr] \\
& \leq \norm{ v_1 }_H \abs{ \phi }_{ C_{ \mathrm{b} }^1( H, \R ) } \biggl[ \sup_{ s \in [0,T]} \norm[\big]{ \e^{ As } }_{ L(H) } \biggr] \\
& \quad \cdot \exp \biggl( T \bigl[ \abs{ F }_{ C_{ \mathrm{b} }^1( H, H ) } + \tfrac{1}{2} \abs{ B }_{ C_{ \mathrm{b} }^1( H, L_2( U,H ) ) }^2 \bigr] \sup_{ s \in [0,T]} \norm[\big]{ \e^{ As } }_{ L(H) }^2 \biggr)
\end{split}
\end{equation}
and
\begin{equation}
\begin{split}
& \abs[\big]{ \bigl( \tfrac{\partial^2}{\partial x^2 } u \bigr) (t,x) ( v_1, v_2 ) } = \abs[\big]{ \E \bigl[  \phi''( X_t^x ) ( X_t^{ x, v_1 }, X_t^{ x, v_2 } ) + \phi'( X_t^x ) X_t^{ x, v_1, v_2 } \bigr] } \\
& \leq \abs{ \phi }_{ C_{ \mathrm{b} }^2( H, \R ) } \norm[\big]{ X_t^{ x, v_1 } }_{ L^2( \P; H ) } \norm[\big]{ X_t^{ x, v_2 } }_{ L^2( \P; H ) } + \abs{ \phi }_{ C_{ \mathrm{b} }^1( H, \R ) } \E \bigl[ \norm[\big]{ X_t^{ x, v_1, v_2 } }_H \bigr] \\
& \leq \norm{ v_1 }_H \norm{ v_2 }_H \abs{ \phi }_{ C_{ \mathrm{b} }^2( H, \R ) } \biggl[ \sup_{ s \in [0,T]} \norm[\big]{ \e^{ As } }_{ L(H) }^2 \biggr] \exp \biggl( T \bigl[ 2 \abs{ F }_{ C_{ \mathrm{b} }^1( H, H ) } + \abs{ B }_{ C_{ \mathrm{b} }^1( H, L_2( U,H ) ) }^2 \bigr] \sup_{ s \in [0,T]} \norm[\big]{ \e^{ As } }_{ L(H) }^2 \biggr) \\
& \quad + \norm{ v_1 }_H  \norm{ v_2 }_H \abs{ \phi }_{ C_{ \mathrm{b} }^1( H, \R ) } \Bigl[ T \bigl( \abs{ F }_{ C_{ \mathrm{b} }^2( H, H ) }^2 + 2 \abs{ B }_{ C_{ \mathrm{b} }^2( H, L_2( U,H ) ) }^2 \bigr) \Bigr]^{ \nicefrac{1}{2} } \biggl[ \sup_{ s \in [0,T]} \norm[\big]{ \e^{ As } }_{ L(H) }^3 \biggr] \\
& \qquad \cdot \exp \biggl( T \bigl[ \tfrac{1}{2} + 3 \abs{ F }_{ C_{ \mathrm{b} }^1( H, H ) } + 4 \abs{ B }_{ C_{ \mathrm{b} }^1( H, L_2( U,H ) ) }^2 \bigr] \sup_{ s \in [0,T]} \norm[\big]{ \e^{ As } }_{ L(H) }^4 \biggr) \\
& \leq \norm{ v_1 }_H  \norm{ v_2 }_H \norm{ \phi }_{ C_{ \mathrm{b} }^2( H, \R ) } \Bigl( 1 \vee \Bigl[ T \bigl( \abs{ F }_{ C_{ \mathrm{b} }^2( H, H ) }^2 + 2 \abs{ B }_{ C_{ \mathrm{b} }^2( H, L_2( U,H ) ) }^2 \bigr) \Bigr]^{ \nicefrac{1}{2} } \Bigr) \biggl[ \sup_{ s \in [0,T]} \norm[\big]{ \e^{ As } }_{ L(H) }^3 \biggr] \\
& \quad \cdot \exp \biggl( T \bigl[ \tfrac{1}{2} + 3 \abs{ F }_{ C_{ \mathrm{b} }^1( H, H ) } + 4 \abs{ B }_{ C_{ \mathrm{b} }^1( H, L_2( U,H ) ) }^2 \bigr] \sup_{ s \in [0,T]} \norm[\big]{ \e^{ As } }_{ L(H) }^4 \biggr).
\end{split}
\end{equation}
This completes the proof of Lemma~\ref{lem:kolmogorov}.
\end{proof}

\subsection{Setting} \label{subsec:setting_prelim}

Let $ ( H , \langle \cdot, \cdot \rangle_{ H } , \norm{\cdot}_{ H } ) $ be a separable $ \R $-Hilbert space, let $ \bbH \subseteq H $ be an orthonormal basis of $ H $, let $ \lambda \colon \bbH \to \R $ be a mapping such that $ \sup_{ h \in \bbH } \lambda_{h} < 0 $, let $ A \colon D(A) \subseteq H \to H $ be the linear operator such that $ D(A) = \bigl\{ v \in H \colon \sum_{ h \in \bbH } \abs{ \lambda_h \langle h,v \rangle_{H} }^2 < \infty \bigr\} $ and such that for all $ v \in D(A) $ it holds that $ A v = \sum_{ h \in \bbH} \lambda_{ h } \langle h,v \rangle_{ H } h $, let $ ( H_r , \langle \cdot , \cdot \rangle_{ H_r } , \norm{\cdot}_{ H_r } )$, $ r \in \R $, be a family of interpolation spaces associated to $ - A $, let $ ( \bfH_r , \langle \cdot , \cdot \rangle_{ \bfH_r } , \norm{\cdot}_{ \bfH_r } )$, $ r \in \R $, be the family of $ \R $-Hilbert spaces such that for all $ r \in \R $ it holds that $ ( \bfH_r , \langle \cdot , \cdot \rangle_{ \bfH_r } , \norm{\cdot}_{ \bfH_r } ) = \bigl( H_{ \nicefrac{r}{2} } \times H_{ \nicefrac{r}{2} - \nicefrac{1}{2} }, \langle \cdot, \cdot \rangle_{ H_{ \nicefrac{r}{2} } \times H_{ \nicefrac{r}{2} - \nicefrac{1}{2} } }, \norm{ \cdot }_{ H_{ \nicefrac{r}{2} } \times H_{ \nicefrac{r}{2} - \nicefrac{1}{2} } } \bigl) $, and let $ \bfA \colon D ( \bfA ) \subseteq \bfH_0 \to \bfH_0 $ be the linear operator such that $ D ( \bfA ) = \bfH_1 $ and such that for all $ (v,w) \in \bfH_1 $ it holds that $ \bfA( v , w ) = ( w , A v ) $.

\subsection{Basic properties of the deterministic wave equation} \label{subsec:basic_properties}

\subsubsection{Basic properties of interpolation spaces associated to the deterministic wave equation}

\begin{lemma} \label{lem:Lambda_interpolation}
Assume the setting in Section~\ref{subsec:setting_prelim} and let $ \bfLambda \colon D(\bfLambda) \subseteq \bfH_0 \to \bfH_0 $ be the linear operator such that $ D(\bfLambda) = \bfH_1 $ and such that for all $ (v,w) \in \bfH_1 $ it holds that
\begin{equation}
\bfLambda (v,w) = \twovector{ \sum_{ h \in \bbH }  \abs{\lambda_h}^{ \nicefrac{1}{2} } \langle h , v \rangle_{ H_0 } h }{ \sum_{ h \in \bbH }  \abs{\lambda_h}^{ \nicefrac{1}{2} } \bigl\langle \abs{ \lambda_h }^{\nicefrac{1}{2}} h , w \bigr\rangle_{ H_{ -\nicefrac{1}{2} } } \abs{ \lambda_h }^{\nicefrac{1}{2}} h }.
\end{equation}
Then the $ \R $-Hilbert spaces $ ( \bfH_r , \langle \cdot , \cdot \rangle_{ \bfH_r } , \norm{\cdot}_{ \bfH_r } )$, $ r \in \R $, are a family of interpolation spaces associated to $ \bfLambda $.
\end{lemma}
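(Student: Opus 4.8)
The plan is to diagonalise $\bfLambda$ explicitly and then match the resulting fractional-power norms with the norms of the spaces $\bfH_r$. The starting observation is that $\bfLambda$ is block-diagonal with respect to the product structure $\bfH_0 = H_0 \times H_{-\nicefrac{1}{2}}$: its first component is $(v,w)\mapsto \sum_{h\in\bbH}\abs{\lambda_h}^{\nicefrac{1}{2}}\langle h,v\rangle_{H_0}h = (-A)^{\nicefrac{1}{2}}v$, and, writing $b_h = \abs{\lambda_h}^{\nicefrac{1}{2}}h$, its second component is $(v,w)\mapsto \sum_{h\in\bbH}\abs{\lambda_h}^{\nicefrac{1}{2}}\langle b_h,w\rangle_{H_{-\nicefrac{1}{2}}}b_h$. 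First I would check that $\{b_h\}_{h\in\bbH}$ is an orthonormal basis of $H_{-\nicefrac{1}{2}}$; indeed $\langle b_h,b_{h'}\rangle_{H_{-\nicefrac{1}{2}}} = \langle\abs{\lambda_h}^{\nicefrac{1}{2}}h,\abs{\lambda_{h'}}^{\nicefrac{1}{2}}h'\rangle_{H_{-\nicefrac{1}{2}}}$ equals $1$ if $h=h'$ and $0$ otherwise, and $\{h\}_{h\in\bbH}$ is dense in $H_{-\nicefrac{1}{2}}$. Consequently $\bbB := \{(h,0)\colon h\in\bbH\}\cup\{(0,b_h)\colon h\in\bbH\}$ is an orthonormal basis of $\bfH_0$, and the displayed formula shows $\bfLambda(h,0) = \abs{\lambda_h}^{\nicefrac{1}{2}}(h,0)$ and $\bfLambda(0,b_h) = \abs{\lambda_h}^{\nicefrac{1}{2}}(0,b_h)$, so that $\bfLambda$ is diagonal over $\bbB$ with eigenvalue $\abs{\lambda_h}^{\nicefrac{1}{2}}$ attached to both $(h,0)$ and $(0,b_h)$.

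Next I would verify that $\bfLambda$ is an admissible operator for the interpolation-space construction of Definition~3.5.25 in \cite{Jentzen2015}: it is densely defined, self-adjoint, and strictly positive, because the assumption $\sup_{h\in\bbH}\lambda_h<0$ forces $\abs{\lambda_h} = -\lambda_h \geq -\sup_{h'\in\bbH}\lambda_{h'} > 0$, so all eigenvalues are bounded below by a fixed positive constant. In particular the eigenvalue map $\bbB\ni b\mapsto\mu_b\in(0,\infty)$ given by $\mu_{(h,0)} = \mu_{(0,b_h)} = \abs{\lambda_h}^{\nicefrac{1}{2}}$ is well defined, and $\bfLambda$ has precisely the diagonal form required so that the spaces associated to it are the fractional-power spaces $\{x\in\bfH_0\colon\sum_{b\in\bbB}(\mu_b)^{2r}\abs{\langle b,x\rangle_{\bfH_0}}^2<\infty\}$ (together with their completions for $r<0$), equipped with the norm $x\mapsto(\sum_{b\in\bbB}(\mu_b)^{2r}\abs{\langle b,x\rangle_{\bfH_0}}^2)^{\nicefrac{1}{2}}$.

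The heart of the argument is then to show that, for every $r\in\R$ and every $(v,w)$, this spectral norm equals $\norm{(v,w)}_{\bfH_r}$. Splitting the sum over $\bbB$ into the two halves $\{(h,0)\}$ and $\{(0,b_h)\}$ gives $\sum_{h\in\bbH}\abs{\lambda_h}^{r}\abs{\langle h,v\rangle_{H_0}}^2 + \sum_{h\in\bbH}\abs{\lambda_h}^{r}\abs{\langle b_h,w\rangle_{H_{-\nicefrac{1}{2}}}}^2$. Since $(H_s)_{s\in\R}$ is a family of interpolation spaces associated to $-A$, the first sum is exactly $\norm{v}_{H_{\nicefrac{r}{2}}}^2$. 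For the second sum I would use the identity $\langle b_h,w\rangle_{H_{-\nicefrac{1}{2}}} = \langle\abs{\lambda_h}^{\nicefrac{1}{2}}h,w\rangle_{H_{-\nicefrac{1}{2}}} = \abs{\lambda_h}^{-\nicefrac{1}{2}}\langle h,w\rangle_{H_0}$ (valid first for $w\in H_0$ by evaluating the $H_{-\nicefrac{1}{2}}$ inner product through $(-A)^{-\nicefrac{1}{2}}$, using $(-A)^{-\nicefrac{1}{2}}b_h = h$, and then extended to the relevant completion by density), which turns the second sum into $\sum_{h\in\bbH}\abs{\lambda_h}^{r-1}\abs{\langle h,w\rangle_{H_0}}^2 = \norm{w}_{H_{\nicefrac{r}{2}-\nicefrac{1}{2}}}^2$. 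Adding the two contributions yields $\norm{v}_{H_{\nicefrac{r}{2}}}^2 + \norm{w}_{H_{\nicefrac{r}{2}-\nicefrac{1}{2}}}^2$, which is precisely $\norm{(v,w)}_{\bfH_r}^2$ by definition of $\bfH_r$, so the family $(\bfH_r)_{r\in\R}$ coincides with the interpolation family of $\bfLambda$.

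The main obstacle I anticipate is the bookkeeping at negative orders: for $r<0$ the spaces $H_{\nicefrac{r}{2}}$, $H_{\nicefrac{r}{2}-\nicefrac{1}{2}}$, and $H_{-\nicefrac{1}{2}}$ are completions rather than subspaces of $H_0$, so the inner-product identity $\langle b_h,w\rangle_{H_{-\nicefrac{1}{2}}} = \abs{\lambda_h}^{-\nicefrac{1}{2}}\langle h,w\rangle_{H_0}$ and the convergence of the various spectral sums must be justified by first working on the dense subspace of finite linear combinations of the $h$ (equivalently, on $\bigcap_{s\in\R}H_s$) and then passing to the limit, exploiting the isometric nature of the powers of $-A$ between the spaces $H_s$. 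Reconciling these completions correctly, and confirming that the abstract axioms of Definition~3.5.25 are met rather than merely an equivalence of norms, is the only delicate point; the algebraic core is the one-line eigenvalue computation above.
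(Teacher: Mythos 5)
Your proposal is correct and follows essentially the same route as the paper: the paper's proof likewise treats $\bfLambda$ as a diagonal operator whose eigenvectors are $(h,0)$ and $(0,\abs{\lambda_h}^{\nicefrac{1}{2}}h)$ with eigenvalue $\abs{\lambda_h}^{\nicefrac{1}{2}}$, splits the spectral sums over the two components to identify $D(\bfLambda^r)$ with $H_{\nicefrac{r}{2}}\times H_{\nicefrac{r}{2}-\nicefrac{1}{2}}=\bfH_r$ and match the inner products for $r\geq 0$, and then checks the norm identity $\norm{\bfLambda^r x}_{\bfH_0}=\norm{x}_{\bfH_r}$ on $\bfH_0$ for $r\leq 0$ to handle the completions. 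Your explicit verification that $\{(h,0)\}\cup\{(0,\abs{\lambda_h}^{\nicefrac{1}{2}}h)\}$ is an orthonormal basis of $\bfH_0$ is implicit in the paper's computation, and your key identity $\langle\abs{\lambda_h}^{\nicefrac{1}{2}}h,w\rangle_{H_{-\nicefrac{1}{2}}}=\abs{\lambda_h}^{-\nicefrac{1}{2}}\langle h,w\rangle_{H_0}$ is exactly the step the paper uses to rewrite the second-component sum with weight $\abs{\lambda_h}^{r-1}$.
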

\begin{proof}[Proof of Lemma~\ref{lem:Lambda_interpolation}]
Observe that $ \bfLambda \colon D(\bfLambda) \subseteq \bfH_0 \to \bfH_0 $ is a symmetric diagonal linear operator with $ \inf( \sigma_{ \mathrm{P} }( \bfLambda ) ) > 0 $ and that for all $ r \in [ 0, \infty ) $ it holds that
\begin{equation}
\begin{split}
& D( \bfLambda^r ) = \biggl\{ x \in \bfH_0 \colon \sum_{ h \in \bbH } \abs{ \lambda_h }^{ r } \abs{ \langle ( h, 0 ), x \rangle_{ \bfH_0 } }^2 + \abs{ \lambda_h }^{ r } \abs[\big]{ \bigl\langle \bigl( 0, \abs{ \lambda_h }^{ \nicefrac{1}{2} } h \bigr), x \bigr\rangle_{ \bfH_0 } }^2 < \infty \biggr\} \\
& = \biggl\{ ( v, w ) \in \bfH_0 \colon \sum_{ h \in \bbH } \abs{ \lambda_h }^{ r } \abs{ \langle h, v \rangle_{ H_0 } }^2 + \abs{ \lambda_h }^{ r } \abs[\big]{ \bigl\langle \abs{ \lambda_h }^{ \nicefrac{1}{2} } h , w \bigr\rangle_{ H_{ - \nicefrac{1}{2} } } }^2 < \infty \biggr\} \\
& = \biggl\{ v \in H_0 \colon \sum_{ h \in \bbH } \abs{ \lambda_h }^{ r } \abs{ \langle h, v \rangle_{ H_0 } }^2 < \infty \biggr\} \times \biggl\{ w \in H_{ - \nicefrac{1}{2} } \colon \sum_{ h \in \bbH } \abs{ \lambda_h }^{ r-1 } \abs[\big]{ \langle h , w \rangle_{ H_0 } }^2 < \infty \biggr\}  \\
& = H_{ \nicefrac{r}{2} } \times H_{ \nicefrac{r}{2} - \nicefrac{1}{2} } = \bfH_r.
\end{split}
\end{equation}
Moreover, for all $ r \in [ 0, \infty ) $, $ x_1 = ( v_1, w_1 ), x_2 = ( v_2, w_2 )  \in \bfH_r $ it holds that
\begin{equation}
\begin{split}
& \langle \bfLambda^r x_1, \bfLambda^r x_2 \rangle_{ \bfH_0 } = \biggl\langle \sum_{ h \in \bbH } \abs{\lambda_h}^{ \nicefrac{r}{2} } \langle h , v_1 \rangle_{ H_0 } h, \sum_{ h \in \bbH }  \abs{\lambda_h}^{ \nicefrac{r}{2} } \langle h , v_2 \rangle_{ H_0 } h \biggr\rangle_{ H_0 } \\
& \quad + \biggl\langle \sum_{ h \in \bbH } \abs{\lambda_h}^{ \nicefrac{r}{2} } \bigl\langle \abs{ \lambda_h }^{\nicefrac{1}{2}} h, w_1 \bigr\rangle_{ H_{ -\nicefrac{1}{2} } } \abs{ \lambda_h }^{\nicefrac{1}{2}} h, \sum_{ h \in \bbH }  \abs{\lambda_h}^{ \nicefrac{r}{2} } \bigl\langle \abs{ \lambda_h }^{\nicefrac{1}{2}} h, w_2 \bigr\rangle_{ H_{ -\nicefrac{1}{2} } } \abs{ \lambda_h }^{\nicefrac{1}{2}} h \biggr\rangle_{ H_{ - \nicefrac{1}{2} } } \\
& = \langle (-A)^{ \nicefrac{r}{2} } v_1, (-A)^{ \nicefrac{r}{2} } v_2 \rangle_{ H_0 } + \langle (-A)^{ \nicefrac{r}{2} } w_1, (-A)^{ \nicefrac{r}{2} } w_2 \rangle_{ H_{ - \nicefrac{1}{2} } } \\
& = \langle v_1, v_2 \rangle_{ H_{ \nicefrac{r}{2} } } + \langle w_1, w_2 \rangle_{ H_{ \nicefrac{r}{2} - \nicefrac{1}{2} } } = \langle x_1, x_2 \rangle_{ \bfH_r }.
\end{split}
\end{equation}
In addition, for all $ r \in ( - \infty, 0 ] $, $ x = ( v, w ) \in \bfH_0 $ it holds that
\begin{equation}
\begin{split}
\norm{ \bfLambda^r x }_{ \bfH_0 }^2 &= \norm[\bigg]{ \sum_{ h \in \bbH } \abs{\lambda_h}^{ \nicefrac{r}{2} } \langle h , v \rangle_{ H_0 } h }_{ H_0 }^2 + \norm[\bigg]{ \sum_{ h \in \bbH } \abs{\lambda_h}^{ \nicefrac{r}{2} } \bigl\langle \abs{ \lambda_h }^{\nicefrac{1}{2}} h, w \bigr\rangle_{ H_{ -\nicefrac{1}{2} } } \abs{ \lambda_h }^{\nicefrac{1}{2}} h }_{ H_{ - \nicefrac{1}{2} } }^2 \\
& = \norm{ (-A)^{ \nicefrac{r}{2} } v }_{ H_0 }^2 + \norm{ (-A)^{ \nicefrac{r}{2} } w }_{ H_{ - \nicefrac{1}{2} } }^2 = \norm{ v }_{ H_{ \nicefrac{r}{2} } }^2 + \norm{ w }_{ H_{ \nicefrac{r}{2} - \nicefrac{1}{2} } }^2 = \norm{ x }_{ \bfH_r }^2.
\end{split}
\end{equation}
This completes the proof of Lemma~\ref{lem:Lambda_interpolation}.
\end{proof}

\subsubsection{Basic properties of the deterministic linear wave equation}

The next elementary and well-known lemma can be found in a slightly different form, e.g., in Section~5.3 in Lindgren~\cite{Lindgren2012}.

\begin{lemma} \label{lem:semigroup}
Assume the setting in Section \ref{subsec:setting_prelim} and let $ \bfS \colon [ 0 , \infty ) \to L( \bfH_0 ) $ be the mapping such that for all $ t \in [ 0 , \infty ) $, $ (v,w) \in \bfH_0 $ it holds that
\begin{equation}
\bfS_t (v,w) = \twovector{ \cos\bigl( (-A)^{ \nicefrac{1}{2} } t \bigr) v + (-A)^{ - \nicefrac{1}{2} } \sin\bigl( (-A)^{ \nicefrac{1}{2} } t \bigr) w }{ - (-A)^{ \nicefrac{1}{2} } \sin\bigl( (-A)^{ \nicefrac{1}{2} } t \bigr) v + \cos\bigl( (-A)^{ \nicefrac{1}{2} } t \bigr) w }.
\end{equation}
Then $ \bfS \colon [ 0 , \infty ) \to L( \bfH_0 ) $ is a strongly continuous semigroup of bounded linear operators on $ \bfH_0 $ and $ \bfA \colon D ( \bfA ) \subseteq \bfH_0 \to \bfH_0 $ is the generator of $ \bfS $.
\end{lemma}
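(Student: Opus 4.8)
The plan is to exploit the fact that, since $A$ is a diagonal operator with respect to $\bbH$ with eigenvalues $\lambda_h<0$ satisfying $\sup_{h\in\bbH}\lambda_h<0$, \emph{everything decouples over the modes} $h\in\bbH$. Writing $\omega_h=\abs{\lambda_h}^{\nicefrac{1}{2}}>0$, the functional calculus shows that $\cos((-A)^{\nicefrac{1}{2}}t)$ and $\sin((-A)^{\nicefrac{1}{2}}t)$ act on the $h$-th mode as multiplication by $\cos(\omega_h t)$ and $\sin(\omega_h t)$, so that for $x=(v,w)\in\bfH_0$ the coordinates $v_h=\langle h,v\rangle_{H_0}$, $w_h=\langle h,w\rangle_{H_0}$ of $\bfS_t x$ are $\cos(\omega_h t)v_h+\omega_h^{-1}\sin(\omega_h t)w_h$ and $-\omega_h\sin(\omega_h t)v_h+\cos(\omega_h t)w_h$.

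First I would introduce rescaled coordinates and observe that the map $\Phi\colon\bfH_0\to\bigoplus_{h\in\bbH}\R^2$, $\Phi(v,w)=(v_h,\omega_h^{-1}w_h)_{h\in\bbH}$, is an isometric isomorphism, because the definition of the interpolation norms gives $\norm{(v,w)}_{\bfH_0}^2=\norm{v}_{H_0}^2+\norm{w}_{H_{-\nicefrac{1}{2}}}^2=\sum_{h\in\bbH}(\abs{v_h}^2+\omega_h^{-2}\abs{w_h}^2)$. In these coordinates the two formulas above say precisely that $\Phi\bfS_t\Phi^{-1}$ is block diagonal with $h$-th block the rotation $R(\omega_h t)$, where $R(\theta)=\begin{pmatrix}\cos\theta&\sin\theta\\-\sin\theta&\cos\theta\end{pmatrix}$. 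In particular each $\bfS_t$ is an isometry, so $\bfS_t\in L(\bfH_0)$ with $\norm{\bfS_t}_{L(\bfH_0)}=1$. The identities $\bfS_0=\id_{\bfH_0}$ and $\bfS_{s+t}=\bfS_s\bfS_t$ then reduce, block by block, to $R(0)=\id_{\R^2}$ and $R(\omega_h s)R(\omega_h t)=R(\omega_h(s+t))$, i.e.\ to the addition theorems for $\sin$ and $\cos$. For strong continuity it suffices, by the uniform bound and the semigroup property, to treat $t\downarrow0$: for fixed $x$ each block satisfies $\norm{R(\omega_h t)\Phi(x)_h-\Phi(x)_h}_{\R^2}^2\to0$ and is dominated by $4\norm{\Phi(x)_h}_{\R^2}^2$, which is summable, so dominated convergence yields $\lim_{t\downarrow0}\bfS_t x=x$.

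It remains to identify the generator. Differentiating a single block at $t=0$ gives $\frac{d}{dt}R(\omega_h t)\big|_{t=0}=\omega_h J$ with $J=\begin{pmatrix}0&1\\-1&0\end{pmatrix}$, so that the difference quotient $t^{-1}(\bfS_t x-x)$ converges mode-wise to the vector with $h$-th block $\omega_h J\,\Phi(x)_h=(w_h,-\omega_h v_h)$; transporting back through $\Phi$ this is exactly $(w,Av)=\bfA(v,w)$, since $(Av)_h=\lambda_h v_h=-\omega_h^2 v_h$ and the second slot carries the weight $\omega_h^{-1}$. To turn this into the precise domain equality $D(\bfG)=\bfH_1$ for the generator $\bfG$ of $\bfS$, I would argue as follows. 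For the inclusion $\bfH_1\subseteq D(\bfG)$ with $\bfG|_{\bfH_1}=\bfA$, note that $\bfA(v,w)=(w,Av)\in\bfH_0$ whenever $(v,w)\in\bfH_1$ (indeed $\norm{Av}_{H_{-\nicefrac{1}{2}}}=\norm{v}_{H_{\nicefrac{1}{2}}}$), and establish the identity $\bfS_t x-x=\int_0^t\bfS_s\bfA x\,\ds$ first on the dense core of finite-mode vectors (where it is the fundamental theorem of calculus for a finite-dimensional linear ODE) and then on all of $\bfH_1$ by density and closedness of $\bfG$; dividing by $t$ and using strong continuity gives $t^{-1}(\bfS_t x-x)\to\bfA x$ in $\bfH_0$. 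For the reverse inclusion I would use that, since $\bfS$ is a contraction semigroup, every $\mu\in(0,\infty)$ lies in the resolvent set of $\bfG$, and check block-wise that $\mu-\bfA\colon\bfH_1\to\bfH_0$ is a bijection (the $h$-th block $\mu-\omega_h J$ is invertible with inverse bounded by $\mu^{-1}$); since $\bfA\subseteq\bfG$ and $\mu-\bfG$ is injective, surjectivity of $\mu-\bfA$ forces $D(\bfA)=D(\bfG)$, whence $\bfA=\bfG$.

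The routine parts — boundedness, the semigroup law, and strong continuity — are immediate once the rotation picture is in place. \emph{The main obstacle is the last step}: upgrading the purely mode-wise convergence of the difference quotients to genuine $\bfH_0$-norm convergence and, above all, pinning the domain of the generator down to \emph{exactly} $\bfH_1$ rather than merely some space containing it; the resolvent/surjectivity argument sketched above is the clean way to close this gap, and the domain computation there is precisely where the defining relation $\bfH_r=H_{\nicefrac{r}{2}}\times H_{\nicefrac{r}{2}-\nicefrac{1}{2}}$ (and hence the weight $\omega_h^{-1}$ in the second coordinate) enters decisively.
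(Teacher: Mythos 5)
The paper never proves this lemma: it is labelled elementary and well-known, with a pointer to Section~5.3 in Lindgren~\cite{Lindgren2012}, so your argument has to stand on its own --- and it does; it is correct and complete. The key computations all check out: writing $\omega_h=\abs{\lambda_h}^{\nicefrac{1}{2}}$, the map $\Phi(v,w)=\bigl(\langle h,v\rangle_{H_0},\omega_h^{-1}\langle h,w\rangle_{H_0}\bigr)_{h\in\bbH}$ is isometric because $\norm{w}_{H_{-\nicefrac{1}{2}}}^2=\sum_{h\in\bbH}\omega_h^{-2}\abs{\langle h,w\rangle_{H_0}}^2$; in these coordinates $\bfS_t$ is block diagonal with rotation blocks $R(\omega_h t)$, which simultaneously gives boundedness (indeed each $\bfS_t$ is an isometry), the semigroup law via the addition theorems, and strong continuity by dominated convergence with the summable majorant $4\norm{\Phi(x)_h}_{\R^2}^2$; and $\bfA$ has blocks $\omega_h J$, so $\mu-\bfA$ is block-invertible with inverse norms $(\mu^2+\omega_h^2)^{-\nicefrac{1}{2}}\leq\mu^{-1}$, uniformly in $h$. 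You correctly identified the only delicate point (mode-wise convergence of difference quotients does not by itself identify the generator or its domain) and your resolvent argument closes it in the standard way: isometric $C_0$-semigroup, hence $(0,\infty)$ lies in the resolvent set of the generator $\bfG$; $\bfA\subseteq\bfG$; $\mu-\bfA$ surjective and $\mu-\bfG$ injective force $D(\bfG)=\bfH_1$ and $\bfG=\bfA$. Two small points you should make explicit when writing this up: first, $(\mu-\bfA)^{-1}$ genuinely lands in $\bfH_1$ because $\norm{x}_{\bfH_1}^2=\sum_{h\in\bbH}\omega_h^2\norm{\Phi(x)_h}_{\R^2}^2$ and $\omega_h^2/(\mu^2+\omega_h^2)\leq 1$, which is exactly where the weight $\omega_h^{-1}$ in the second coordinate enters, as you anticipated; second, the passage from finite-mode vectors to all of $\bfH_1$ in the inclusion $\bfH_1\subseteq D(\bfG)$ works because $\bfA\colon\bfH_1\to\bfH_0$ is bounded (in fact $\norm{\bfA x}_{\bfH_0}=\norm{x}_{\bfH_1}$), so along finite-mode approximations $x_n\to x$ in $\bfH_1$ both $x_n\to x$ and $\bfG x_n=\bfA x_n\to\bfA x$ in $\bfH_0$, and closedness of $\bfG$ applies.
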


\begin{lemma} \label{lem:length_preservation}
Assume the setting in Section~\ref{subsec:setting_prelim}. Then for all $ t \in [ 0 , \infty ) $, $ x \in \bfH_0 $ it holds that $ \norm{ \e^{ \bfA t } x }_{ \bfH_0 } = \norm{ x }_{ \bfH_0 } $ and $ \sup_{ s \in [ 0, \infty ) } \norm{ \e^{ \bfA s } }_{ L( \bfH_0 ) } = 1 $.
\end{lemma}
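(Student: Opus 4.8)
The plan is to invoke Lemma~\ref{lem:semigroup}, which identifies the semigroup $(\e^{\bfA t})_{t \in [0,\infty)}$ generated by $\bfA$ with the explicit family $(\bfS_t)_{t \in [0,\infty)}$, so that it suffices to show that each $\bfS_t$ is an isometry on $\bfH_0 = H_0 \times H_{-\nicefrac{1}{2}}$. Recalling that $\norm{(v,w)}_{\bfH_0}^2 = \norm{v}_{H_0}^2 + \norm{w}_{H_{-\nicefrac{1}{2}}}^2$ and that, by the definition of the interpolation spaces associated to $-A$, one has $\norm{w}_{H_{-\nicefrac{1}{2}}} = \norm{(-A)^{-\nicefrac{1}{2}} w}_{H_0}$ for all $w \in H_{-\nicefrac{1}{2}}$, I would first introduce the abbreviations $C := \cos((-A)^{\nicefrac{1}{2}} t)$, $S := \sin((-A)^{\nicefrac{1}{2}} t)$, and $\tilde{w} := (-A)^{-\nicefrac{1}{2}} w$, so that $\norm{\tilde{w}}_{H_0} = \norm{w}_{H_{-\nicefrac{1}{2}}}$.

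Next I would rewrite the two components of $\bfS_t(v,w)$, applying $(-A)^{-\nicefrac{1}{2}}$ to the second one. Since $C$, $S$, and $(-A)^{\pm \nicefrac{1}{2}}$ are all functions of the single diagonal operator $-A$ and hence commute, the first component equals $C v + S \tilde{w} \in H_0$, and $(-A)^{-\nicefrac{1}{2}}$ applied to the second component equals $-S v + C \tilde{w} \in H_0$. Therefore
\begin{equation*}
\norm{\bfS_t(v,w)}_{\bfH_0}^2 = \norm{C v + S \tilde{w}}_{H_0}^2 + \norm{-S v + C \tilde{w}}_{H_0}^2.
\end{equation*}

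Then I would expand both squared norms via the $H_0$-inner product. Because $C$ and $S$ are self-adjoint on $H_0$ and satisfy $CS = SC$, the two mixed terms $2\langle CS\tilde{w}, v\rangle_{H_0}$ and $-2\langle SC\tilde{w}, v\rangle_{H_0}$ cancel, while the diagonal terms combine through the identity $C^2 + S^2 = \id_{H_0}$ (that is, $\cos^2 + \sin^2 = 1$ in the spectral calculus of $(-A)^{\nicefrac{1}{2}}$) to yield $\norm{v}_{H_0}^2 + \norm{\tilde{w}}_{H_0}^2 = \norm{(v,w)}_{\bfH_0}^2$. This gives $\norm{\e^{\bfA t} x}_{\bfH_0} = \norm{x}_{\bfH_0}$ for all $t \in [0,\infty)$ and $x \in \bfH_0$. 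Finally, since $\bfH_0 \neq \{0\}$, the isometry property forces $\norm{\e^{\bfA s}}_{L(\bfH_0)} = 1$ for every $s \in [0,\infty)$, and hence the supremum over $s$ equals $1$.

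The algebraic expansion and cancellation are routine; the main obstacle is the bookkeeping across the interpolation spaces. One must justify that $C$, $S$, and $(-A)^{\pm\nicefrac{1}{2}}$ extend to bounded, mutually commuting, self-adjoint operators on the relevant spaces $H_0$ and $H_{-\nicefrac{1}{2}}$, and that the identity $\norm{w}_{H_{-\nicefrac{1}{2}}} = \norm{(-A)^{-\nicefrac{1}{2}}w}_{H_0}$ together with the commutation $(-A)^{-\nicefrac{1}{2}} S = S(-A)^{-\nicefrac{1}{2}}$ legitimately transfers the whole computation to the single space $H_0$. This is precisely where the diagonal (spectral) structure of $-A$ in the orthonormal basis $\bbH$ does the work: mode by mode the argument reduces to the elementary fact that the $2\times 2$ matrix $\left(\begin{smallmatrix} \cos\theta & \sin\theta \\ -\sin\theta & \cos\theta\end{smallmatrix}\right)$ is orthogonal.
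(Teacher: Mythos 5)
Your proof is correct and takes essentially the same route as the paper's: both invoke Lemma~\ref{lem:semigroup} for the explicit form of $\e^{\bfA t}$, expand the squared $\bfH_0$-norm, cancel the mixed terms using the commutativity and self-adjointness of the spectral calculus of $-A$, and conclude via $\cos^2 + \sin^2 = 1$, with the operator-norm claim following from the isometry. The only cosmetic difference is that you transfer the second component to $H_0$ through the isometry $(-A)^{-\nicefrac{1}{2}}$, whereas the paper carries out the identical cancellation keeping that component in $H_{-\nicefrac{1}{2}}$ (and writes the computation for $x \in \bfH_1$, extending implicitly by density).
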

\begin{proof}[Proof of Lemma~\ref{lem:length_preservation}]
Lemma~\ref{lem:semigroup} implies for all $ t \in [ 0 , \infty ) $, $ x = ( v, w ) \in \bfH_1 $ that
\begin{equation}
\begin{split}
\norm{ \e^{ \bfA t } x }_{ \bfH_0 }^2 & = \norm[\big]{ \cos\bigl( (-A)^{ \nicefrac{1}{2} } t \bigr) v + (-A)^{ - \nicefrac{1}{2} } \sin\bigl( (-A)^{ \nicefrac{1}{2} } t \bigr) w }_{ H_0 }^2 \\
& \quad + \norm[\big]{ - (-A)^{ \nicefrac{1}{2} } \sin\bigl( (-A)^{ \nicefrac{1}{2} } t \bigr) v + \cos\bigl( (-A)^{ \nicefrac{1}{2} } t \bigr) w }_{ H_{ - \nicefrac{1}{2} } }^2 \\
& = \norm[\big]{ \cos\bigl( (-A)^{ \nicefrac{1}{2} } t \bigr) v }_{ H_0 }^2 + \norm[\big]{ (-A)^{ - \nicefrac{1}{2} } \sin\bigl( (-A)^{ \nicefrac{1}{2} } t \bigr) w }_{ H_0 }^2 \\
& \quad + \norm[\big]{ (-A)^{ \nicefrac{1}{2} } \sin\bigl( (-A)^{ \nicefrac{1}{2} } t \bigr) v }_{ H_{ - \nicefrac{1}{2} } }^2 +  \norm[\big]{ \cos\bigl( (-A)^{ \nicefrac{1}{2} } t \bigr) w }_{ H_{ - \nicefrac{1}{2} } }^2 \\
& \quad + 2 \langle \cos\bigl( (-A)^{ \nicefrac{1}{2} } t \bigr) v, (-A)^{ - \nicefrac{1}{2} } \sin\bigl( (-A)^{ \nicefrac{1}{2} } t \bigr) w \rangle_{ H_0 } \\
& \quad - 2 \langle \sin\bigl( (-A)^{ \nicefrac{1}{2} } t \bigr) v, (-A)^{ \nicefrac{1}{2} } \cos\bigl( (-A)^{ \nicefrac{1}{2} } t \bigr) w \rangle_{ H_{ - \nicefrac{1}{2} } } \\
& = \norm[\big]{ \cos\bigl( (-A)^{ \nicefrac{1}{2} } t \bigr) v }_{ H_0 }^2 + \norm[\big]{ \sin\bigl( (-A)^{ \nicefrac{1}{2} } t \bigr) v }_{ H_0 }^2 \\
& \quad + \norm[\big]{ \sin\bigl( (-A)^{ \nicefrac{1}{2} } t \bigr) w }_{ H_{ - \nicefrac{1}{2} } }^2 + \norm[\big]{ \cos\bigl( (-A)^{ \nicefrac{1}{2} } t \bigr) w }_{ H_{ - \nicefrac{1}{2} } }^2 \\
& = \norm{ v }_{ H_0 }^2 + \norm{ w }_{ H_{ - \nicefrac{1}{2} } }^2 = \norm{ x }_{ \bfH_0 }^2.
\end{split}
\end{equation}
This completes the proof of Lemma~\ref{lem:length_preservation}.
\end{proof}

\begin{lemma} \label{lem:commute}
Assume the setting in Section~\ref{subsec:setting_prelim},  let $ P_{I} \colon \bigcup_{r\in\R} H_r \to \bigcup_{r\in\R} H_r$, $ I \in \cP( \bbH ) $, be the mappings such that for all $ I \in \cP( \bbH ) $, $ r \in \R $, $v \in H_r $ it holds that $ P_{I} (v) =  \sum_{ h \in I }  \langle \abs{\lambda_h}^{-r} h , v \rangle_{ H_r } \abs{\lambda_h}^{-r} h $, and let $ \bfP_{I} \colon \bigcup_{ r \in \R } \bfH_r \to \bigcup_{ r \in \R } \bfH_r $, $ I \in \cP( \bbH ) $, be the mappings such that for all $ I \in \cP( \bbH ) $, $ r \in \R $, $ ( v , w ) \in \bfH_r $ it holds that $ \bfP_{I} ( v , w ) = \bigl( P_{I} ( v ) , P_{I} ( w ) \bigr) $. Then for all $ I \in \cP( \bbH ) $, $ x \in \bfH_1 $ it holds that $ \bfA \bfP_I (x) = \bfP_I \bfA (x) $ and for all $ I \in \cP( \bbH ) $, $ t \in [ 0 , \infty ) $, $ x \in \bfH_0 $ it holds that $ \e^{ \bfA t } \bfP_I (x) = \bfP_I \e^{ \bfA t } (x) $.
\end{lemma}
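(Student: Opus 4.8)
The plan is to reduce both commutation relations to a single elementary fact, namely that each projection $ P_I $ commutes with every diagonal operator associated to the orthonormal basis $ \bbH $, and then to read off the two assertions from the component-wise definitions of $ \bfA $ and of the semigroup $ \bfS $ provided by Lemma~\ref{lem:semigroup}. Throughout I would exploit that $ P_I $ is bounded on each $ H_r $ and that finite linear combinations of elements of $ \bbH $ are dense, so that it suffices to check every operator identity on the basis vectors $ h \in \bbH $.

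First I would determine how $ P_I $ acts on $ \bbH $. Using the interpolation-space inner product relation $ \langle a, b \rangle_{ H_r } = \langle (-A)^r a, (-A)^r b \rangle_{ H_0 } $ (as in the proof of Lemma~\ref{lem:Lambda_interpolation}) together with $ (-A)^r h = \abs{ \lambda_h }^r h $ for $ h \in \bbH $, a direct computation shows that $ \{ \abs{ \lambda_h }^{-r} h \}_{ h \in \bbH } $ is an orthonormal basis of $ H_r $ and that for every $ g \in \bbH $ and every $ r \in \R $ one has $ P_I g = \bbone_I(g)\, g $. In particular $ P_I $ is, on each $ H_r $, the orthogonal projection onto the closed linear span of $ \{ h : h \in I \} $, and its action on $ \bbH $ is independent of $ r $. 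From this the key commutation is immediate: for a mapping $ \phi \colon \R \to \R $ with associated diagonal operator $ \phi(A) $ and for every $ g \in \bbH $ we get $ \phi(A) P_I g = \bbone_I(g)\, \phi(\lambda_g)\, g = P_I \phi(A) g $, whence $ \phi(A) $ and $ P_I $ commute after the usual extension by linearity and continuity. Applying this with the relevant choices yields that $ P_I $ commutes with $ A $ (in its continuous extension $ H_{ \nicefrac{1}{2} } \to H_{ - \nicefrac{1}{2} } $), with $ (-A)^{ \nicefrac{1}{2} } $ and $ (-A)^{ - \nicefrac{1}{2} } $, and with $ \cos\bigl( (-A)^{ \nicefrac{1}{2} } t \bigr) $ and $ \sin\bigl( (-A)^{ \nicefrac{1}{2} } t \bigr) $ for every $ t \in [0,\infty) $.

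The two claims then follow componentwise. For the first, given $ x = (v,w) \in \bfH_1 = H_{ \nicefrac{1}{2} } \times H_0 $ one checks that $ \bfP_I x = ( P_I v, P_I w ) \in H_{ \nicefrac{1}{2} } \times H_0 = \bfH_1 = D(\bfA) $, and hence $ \bfA \bfP_I x = ( P_I w, A P_I v ) = ( P_I w, P_I A v ) = \bfP_I ( w, A v ) = \bfP_I \bfA x $. For the second, I would invoke Lemma~\ref{lem:semigroup} to identify $ \e^{ \bfA t } $ with $ \bfS_t $ and then, for $ x = (v,w) \in \bfH_0 $, insert the explicit formula for $ \bfS_t $ and move $ \bfP_I $ through each of the four entries using the commutations from the previous step, arriving at $ \e^{ \bfA t } \bfP_I x = \bfP_I \e^{ \bfA t } x $.

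I expect the main obstacle to be purely in the bookkeeping around the interpolation-space inner products and the domains: specifically, verifying that the $ r $-dependent normalisation $ \abs{ \lambda_h }^{-r} $ in the definition of $ P_I $ really renders the projection $ r $-independent on $ \bbH $, and that the commutation $ A P_I v = P_I A v $ is valid on the correct space (with $ A $ read as the extension $ H_{ \nicefrac{1}{2} } \to H_{ - \nicefrac{1}{2} } $) rather than only on the finite span of $ \bbH $. Once these points are settled, everything else is a routine componentwise verification.
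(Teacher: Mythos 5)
Your proposal is correct and takes essentially the same route as the paper's proof: a componentwise verification of both identities, using the explicit formula for $ \e^{ \bfA t } = \bfS_t $ from Lemma~\ref{lem:semigroup} and the commutation of $ P_I $ with $ A $ and with the operators $ \cos\bigl( (-A)^{ \nicefrac{1}{2} } t \bigr) $, $ \sin\bigl( (-A)^{ \nicefrac{1}{2} } t \bigr) $, $ (-A)^{ \pm \nicefrac{1}{2} } $. The only difference is one of detail: the paper treats these scalar commutations as self-evident properties of diagonal operators, whereas you justify them by computing $ P_I g = \bbone_I(g)\, g $ on basis vectors $ g \in \bbH $ and extending by linearity and density, which is a legitimate (and slightly more careful) filling-in of the same argument.
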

\begin{proof}[Proof of Lemma~\ref{lem:commute}]
For all $ I \in \cP( \bbH ) $, $ x = (v, w) \in \bfH_1 $ it holds that
\begin{equation}
\bfP_I \bfA( x ) = \bfP_I ( w, Av ) = ( P_I (w) , P_I Av ) = ( P_I (w) , A P_I (v) ) = \bfA \bfP_I ( x ).
\end{equation}
In addition, Lemma~\ref{lem:semigroup} shows for all $ I \in \cP( \bbH ) $, $ t \in [ 0 , \infty ) $, $ x = ( v, w ) \in \bfH_0 $ that
\begin{equation}
\begin{split}
\e^{ \bfA t } \bfP_I (x) & = \twovector{ \cos\bigl( (-A)^{ \nicefrac{1}{2} } t \bigr) P_I (v) + (-A)^{ - \nicefrac{1}{2} } \sin\bigl( (-A)^{ \nicefrac{1}{2} } t \bigr) P_I (w) }{ - (-A)^{ \nicefrac{1}{2} } \sin\bigl( (-A)^{ \nicefrac{1}{2} } t \bigr) P_I (v) + \cos\bigl( (-A)^{ \nicefrac{1}{2} } t \bigr) P_I (w) } \\
& = \twovector{ P_I \bigl( \cos\bigl( (-A)^{ \nicefrac{1}{2} } t \bigr) v + (-A)^{ - \nicefrac{1}{2} } \sin\bigl( (-A)^{ \nicefrac{1}{2} } t \bigr) w \bigr) }{ P_I \bigl( - (-A)^{ \nicefrac{1}{2} } \sin\bigl( (-A)^{ \nicefrac{1}{2} } t \bigr) v + \cos\bigl( (-A)^{ \nicefrac{1}{2} } t \bigr) w \bigr) } \\
& =  \bfP_I \e^{ \bfA t } (x).
\end{split}
\end{equation}
The proof of Lemma~\ref{lem:commute} is thus completed.
\end{proof}

\section{Upper bounds for weak errors} \label{sec:WeakConvergenceRates}

\subsection{Setting}\label{subsec:WeakSetting}

Assume the setting in Section~\ref{subsec:setting}, let $ ( H , \langle \cdot, \cdot \rangle_{ H } , \norm{\cdot}_{ H } ) $ be a separable $ \R $-Hilbert space, let $ \bbH \subseteq H $ be an orthonormal basis of $ H $, let $ \lambda \colon \bbH \to \R $ be a mapping such that $ \sup_{ h \in \bbH } \lambda_{h} < 0 $, let $ A \colon D(A) \subseteq H \to H $ be the linear operator such that $ D(A) = \bigl\{ v \in H \colon \sum_{ h \in \bbH } \abs{ \lambda_h \langle h,v \rangle_{H} }^2 < \infty \bigr\} $ and such that for all $ v \in D(A) $ it holds that $ A v = \sum_{ h \in \bbH} \lambda_{ h } \langle h,v \rangle_{ H } h $, let $ ( H_r , \langle \cdot , \cdot \rangle_{ H_r } , \norm{\cdot}_{ H_r } )$, $ r \in \R $, be a family of interpolation spaces associated to $ - A $, let $ ( \bfH_r , \langle \cdot , \cdot \rangle_{ \bfH_r } , \norm{\cdot}_{ \bfH_r } )$, $ r \in \R $, be the family of $ \R $-Hilbert spaces such that for all $ r \in \R $ it holds that $ ( \bfH_r , \langle \cdot , \cdot \rangle_{ \bfH_r } , \norm{\cdot}_{ \bfH_r } ) = \bigl( H_{ \nicefrac{r}{2} } \times H_{ \nicefrac{r}{2} - \nicefrac{1}{2} }, \langle \cdot, \cdot \rangle_{ H_{ \nicefrac{r}{2} } \times H_{ \nicefrac{r}{2} - \nicefrac{1}{2} } }, \norm{ \cdot }_{ H_{ \nicefrac{r}{2} } \times H_{ \nicefrac{r}{2} - \nicefrac{1}{2} } } \bigl) $, let $ P_{I} \colon \bigcup_{r\in\R} H_r \to \bigcup_{r\in\R} H_r$, $ I \in \cP( \bbH ) $, be the mappings such that for all $ I \in \cP( \bbH ) $, $ r \in \R $, $v \in H_r $ it holds that $ P_{I} (v) =  \sum_{ h \in I }  \langle \abs{\lambda_h}^{-r} h , v \rangle_{ H_r } \abs{\lambda_h}^{-r} h $, let $ \bfP_{I} \colon \bigcup_{ r \in \R } \bfH_r \to \bigcup_{ r \in \R } \bfH_r $, $ I \in \cP( \bbH ) $, be the mappings such that for all $ I \in \cP( \bbH ) $, $ r \in \R $, $ ( v , w ) \in \bfH_r $ it holds that $ \bfP_{I} ( v , w ) = \bigl( P_{I} ( v ) , P_{I} ( w ) \bigr) $, let $ \bfA \colon D ( \bfA ) \subseteq \bfH_0 \to \bfH_0 $ be the linear operator such that $ D ( \bfA ) = \bfH_1 $ and such that for all $ (v,w) \in \bfH_1 $ it holds that $ \bfA( v , w ) = ( w , A v ) $, let $ \bfLambda \colon D(\bfLambda) \subseteq \bfH_0 \to \bfH_0 $ be the linear operator such that $ D(\bfLambda) = \bfH_1 $ and such that for all $ (v,w) \in \bfH_1 $ it holds that \smash{$ \bfLambda (v,w) = \bigl( \sum_{ h \in \bbH }  \abs{\lambda_h}^{ \nicefrac{1}{2} } \langle h , v \rangle_{ H_0 } h , \sum_{ h \in \bbH }  \abs{\lambda_h}^{ \nicefrac{1}{2} } \bigl\langle \abs{ \lambda_h }^{\nicefrac{1}{2}} h , w \bigr\rangle_{ H_{ -1/2 } } \abs{ \lambda_h }^{\nicefrac{1}{2}} h \bigr) $}, and let $ \gamma \in ( 0,\infty ) $, $ \beta \in ( \nicefrac{\gamma}{2}, \gamma ] $, $ \rho \in [0, 2( \gamma - \beta ) ] $, $ C_{ \bfF }, C_{ \bfB } \in [ 0, \infty) $, $ \xi \in L^2 ( \P \vert_{\cF_0} ; \bfH_{ 2( \gamma - \beta ) } ) $, $ \bfF \in \mathrm{Lip}^0( \bfH_0, \bfH_0 ) $, $ \bfB \in \mathrm{Lip}^0( \bfH_0, L_2( U, \bfH_0 ) ) $ satisfy that $ \bfLambda^{ -\beta } \in L_2( \bfH_0 ) $, $ \bfF \vert_{ \bfH_\rho } \in \mathrm{Lip}^0( \bfH_\rho, \bfH_{ 2( \gamma - \beta ) } ) $, $ \bfB \vert_{ \bfH_\rho } \in \mathrm{Lip}^0( \bfH_\rho, L_2( U, \bfH_\rho ) \cap L( U, \bfH_\gamma ) ) $, \smash{$ \bfF \vert_{ \bigcap_{ r \in \R } \bfH_r } \in C_{\mathrm{b}}^2( \bigcap_{ r \in \R } \bfH_r, \bfH_0 ) $}, \smash{$ \bfB \vert_{ \bigcap_{ r \in \R } \bfH_r } \in C_{ \mathrm{b} }^2( \bigcap_{ r \in \R } \bfH_r, $} \smash{$ L_2( U, \bfH_0 ) ) $}, \smash{$ C_{ \bfF } \ = \ \sup_{ x \in \cap_{ r \in \R } \bfH_r } \sup_{ v_1, v_2 \in \cap_{ r \in \R } \bfH_r, \; \norm{ v_1 }_{ \bfH_0 } \vee \norm{ v_2 }_{ \bfH_0 } \leq 1 } \; \norm{ \bfF''(x)( v_1, v_2 ) }_{ \bfH_0 } \; < \; \infty $}, and \smash{$ C_{ \bfB } = $} \smash{$ \sup_{ x \in \cap_{ r \in \R } \bfH_r } \sup_{ v_1, v_2 \in \cap_{ r \in \R } \bfH_r, \; \norm{ v_1 }_{ \bfH_0 } \vee \norm{ v_2 }_{ \bfH_0 } \leq 1 }  \norm{ \bfB''(x)( v_1, v_2 ) }_{ L_2( U, \bfH_0) } < \infty $}.

\subsection{Weak convergence rates for the Galerkin approximation} \label{subsec:weak_rates}

\begin{remark} \label{rmk:existence_regularity}
Assume the setting in Section \ref{subsec:WeakSetting}. The assumptions that $ \bfF \vert_{ \bfH_\rho } \in \mathrm{Lip}^0( \bfH_\rho, \bfH_{ 2( \gamma - \beta ) } ) $ and $ \bfB \vert_{ \bfH_\rho } \in \mathrm{Lip}^0( \bfH_\rho, L_2( U, \bfH_\rho ) \cap L( U, \bfH_\gamma ) ) $ then ensure that $ \bfF \vert_{ \bfH_\rho } \in \mathrm{Lip}^0( \bfH_\rho, \bfH_\rho ) $ and $ \bfB \vert_{ \bfH_\rho } \in \mathrm{Lip}^0( \bfH_\rho, L_2( U, \bfH_\rho ) ) $ and Theorem~\ref{thm:existence} hence shows that there exist up to modifications unique $ ( \cF_t )_{ t \in [0,T] } $-predictable stochastic processes $ X^I \colon [ 0 , T ] \times \Omega \to \bfP_I ( \bfH_\rho )$, $ I \in \cP( \bbH ) $, satisfying for all $ I \in \cP( \bbH ) $, $t \in [ 0 , T ] $ that $ \sup_{ s \in [ 0 , T ] } \norm{ X_s^I }_{ L^2 ( \P ; \bfH_\rho ) } < \infty $ and $ \P $-a.s.\ that
\begin{equation}
X_t^I =  \e^{\bfA t} \bfP_I \xi + \int_0^t \e^{ \bfA(t-s) } \bfP_I \bfF(X_s^I) \ds  + \int_0^t \e^{ \bfA(t-s) } \bfP_I \bfB(X_s^I) \dWs.
\end{equation}
\end{remark}

The following lemma provides global $L^2$-bounds on the stochastic processes $ X^I \colon [ 0 , T ] \times \Omega \to \bfP_I ( \bfH_\rho )$, $ I \in \cP( \bbH ) $, in Remark~\ref{rmk:existence_regularity}.

\begin{lemma} \label{lem:finiteness}
Assume the setting in Section~\ref{subsec:WeakSetting} and let $ X^I \colon [ 0 , T ] \times \Omega \to \bfP_{I} ( \bfH_\rho ) $, $ I \in \cP( \bbH ) $, be $( \cF_t )_{ t \in [0,T] } $-predictable stochastic processes such that for all $ I \in \cP( \bbH ) $, $ t \in [0,T] $ it holds that $ \sup_{ s \in [ 0 , T ] } \norm{ X_s^I }_{ L^2 ( \P ; \bfH_\rho ) } < \infty $ and $ \P $-a.s.\ that
\begin{equation}
X_t^I =  \e^{\bfA t} \bfP_{I} \xi + \int_0^t \e^{ \bfA(t-s) } \bfP_I \bfF(X_s^I) \ds  + \int_0^t \e^{ \bfA(t-s) } \bfP_I \bfB(X_s^I) \dWs.
\end{equation}
Then
\begin{equation}
\begin{split}
& \sup_{I \in \cP( \bbH )} \sup_{ t \in [ 0 , T ] }  \bigl( 1 \vee \norm[\big]{ X_t^I }_{ L^2( \P; \bfH_\rho ) } \bigr) \leq \bigl( 1 \vee \norm{ \xi }_{ L^2( \P; \bfH_\rho ) } \bigr) \\
& \quad \cdot \exp \Bigl( T \Bigl[ \norm[\big]{\bfF \vert_{ \bfH_\rho }  }_{ \mathrm{Lip}^0( \bfH_\rho, \bfH_\rho ) } + \tfrac{1}{2} \norm[\big]{ \bfB \vert_{ \bfH_\rho } }_{ \mathrm{Lip}^0( \bfH_\rho, L_2( U, \bfH_\rho ) ) }^2 \Bigr] \Bigr) < \infty.
\end{split}
\end{equation}
\end{lemma}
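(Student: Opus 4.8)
The plan is to work directly in the Hilbert space $\bfH_\rho$ and to apply the mild It\^o formula (Corollary~1 in \cite{DaPratoJentzenRoeckner2010}) to the mild solution $X^I$ with the test function $\varphi \colon \bfH_\rho \to \R$, $\varphi(x) = \norm{x}_{\bfH_\rho}^2$. Before doing so I would record two structural facts. First, the semigroup $\e^{\bfA t}$ is an \emph{isometry} on $\bfH_\rho$: by Lemma~\ref{lem:Lambda_interpolation} one has $\norm{x}_{\bfH_\rho} = \norm{\bfLambda^\rho x}_{\bfH_0}$, the operator $\bfLambda^\rho$ commutes with $\e^{\bfA t}$ (both are built from the diagonal operator $(-A)^{\nicefrac12}$ in the basis $\bbH$), and Lemma~\ref{lem:length_preservation} gives the $\bfH_0$-isometry; hence $\norm{\e^{\bfA t} x}_{\bfH_\rho} = \norm{\bfLambda^\rho \e^{\bfA t} x}_{\bfH_0} = \norm{\e^{\bfA t}\bfLambda^\rho x}_{\bfH_0} = \norm{x}_{\bfH_\rho}$, so $\e^{\bfA t}$ preserves $\bfH_\rho$-norms, hence (by polarization) $\bfH_\rho$-inner products and also the Hilbert--Schmidt norm on $L_2(U,\bfH_\rho)$. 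Second, $\bfP_I$ is an orthogonal projection, and hence a contraction, on $\bfH_\rho$ (the family $\{\abs{\lambda_h}^{-r} h\}_{h\in\bbH}$ is orthonormal in $H_r$, so $P_I$ projects orthogonally onto the coordinates indexed by $I$), and it commutes with $\e^{\bfA t}$ by Lemma~\ref{lem:commute}. Finally, Remark~\ref{rmk:existence_regularity} ensures $\bfF\vert_{\bfH_\rho} \in \mathrm{Lip}^0(\bfH_\rho,\bfH_\rho)$ and $\bfB\vert_{\bfH_\rho} \in \mathrm{Lip}^0(\bfH_\rho, L_2(U,\bfH_\rho))$, and $\xi \in L^2(\P;\bfH_\rho)$ since $\rho \leq 2(\gamma-\beta)$.

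With $\varphi'(x)v = 2\langle x, v\rangle_{\bfH_\rho}$ and $\varphi''(x)(v_1,v_2) = 2\langle v_1,v_2\rangle_{\bfH_\rho}$, the mild It\^o formula applied to the equation for $X^I$ (whose integrability hypothesis $\sup_{s\in[0,T]}\norm{X_s^I}_{L^2(\P;\bfH_\rho)} < \infty$ is assumed) yields, after taking expectations so that the martingale term drops out, an identity of the form
\[
\E\bigl[\norm{X_t^I}_{\bfH_\rho}^2\bigr] = \norm{\e^{\bfA t}\bfP_I\xi}_{L^2(\P;\bfH_\rho)}^2 + 2\!\int_0^t\! \E\bigl[\langle \e^{\bfA(t-s)}X_s^I, \e^{\bfA(t-s)}\bfP_I\bfF(X_s^I)\rangle_{\bfH_\rho}\bigr]\ds + \int_0^t\! \E\bigl[\norm{\e^{\bfA(t-s)}\bfP_I\bfB(X_s^I)}_{L_2(U,\bfH_\rho)}^2\bigr]\ds .
\]
Here I would use the isometry of $\e^{\bfA(t-s)}$ to cancel the semigroup inside the inner product and inside the Hilbert--Schmidt norm, reducing the integrands to $\langle X_s^I, \bfP_I\bfF(X_s^I)\rangle_{\bfH_\rho}$ and $\norm{\bfP_I\bfB(X_s^I)}_{L_2(U,\bfH_\rho)}^2$, and the $\bfH_\rho$-contractivity of $\bfP_I$ together with $\norm{\e^{\bfA t}\bfP_I\xi}_{\bfH_\rho}\le\norm{\xi}_{\bfH_\rho}$ to bound the first term by $\norm{\xi}_{L^2(\P;\bfH_\rho)}^2$.

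Next I would apply the Cauchy--Schwarz inequality (first on $\bfH_\rho$, then on $L^2(\P)$) and the contractivity of $\bfP_I$ to obtain $2\E[\langle X_s^I,\bfP_I\bfF(X_s^I)\rangle_{\bfH_\rho}] \le 2\norm{X_s^I}_{L^2(\P;\bfH_\rho)}\norm{\bfF(X_s^I)}_{L^2(\P;\bfH_\rho)}$, and likewise $\E[\norm{\bfP_I\bfB(X_s^I)}_{L_2(U,\bfH_\rho)}^2] \le \norm{\bfB(X_s^I)}_{L^2(\P;L_2(U,\bfH_\rho))}^2$. The decisive step, which preserves the sharp constants, is the order in which the linear-growth bounds are taken: from the pointwise affine estimate $\norm{\bfF(x)}_{\bfH_\rho} \le \norm{\bfF(0)}_{\bfH_\rho} + \abs{\bfF\vert_{\bfH_\rho}}_{\mathrm{Lip}^0(\bfH_\rho,\bfH_\rho)}\norm{x}_{\bfH_\rho}$ I would \emph{first} take the $L^2(\P)$-norm (by the triangle inequality) and only then pass to $1\vee(\cdot)$, giving $\norm{\bfF(X_s^I)}_{L^2(\P;\bfH_\rho)} \le \norm{\bfF\vert_{\bfH_\rho}}_{\mathrm{Lip}^0(\bfH_\rho,\bfH_\rho)}\bigl(1\vee\norm{X_s^I}_{L^2(\P;\bfH_\rho)}\bigr)$, and analogously for $\bfB$. (Taking the pointwise $1\vee$ first and then squaring would cost a spurious constant through Jensen's inequality, so the order matters.) Writing $g(t) = \norm{X_t^I}_{L^2(\P;\bfH_\rho)}$ and abbreviating $a = \norm{\bfF\vert_{\bfH_\rho}}_{\mathrm{Lip}^0(\bfH_\rho,\bfH_\rho)}$ and $b = \norm{\bfB\vert_{\bfH_\rho}}_{\mathrm{Lip}^0(\bfH_\rho,L_2(U,\bfH_\rho))}$, these three ingredients combine to
\[
g(t)^2 \le \norm{\xi}_{L^2(\P;\bfH_\rho)}^2 + (2a + b^2)\int_0^t \bigl(1\vee g(s)\bigr)^2\ds .
\]

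Finally I would pass to $\Phi(t) = (1\vee g(t))^2$, using $\max(1,\,c+d) \le \max(1,c) + d$ for $d\ge0$ to get $\Phi(t) \le \bigl(1\vee\norm{\xi}_{L^2(\P;\bfH_\rho)}^2\bigr) + (2a+b^2)\int_0^t\Phi(s)\ds$; since $\sup_{s\in[0,T]}\Phi(s)<\infty$ by hypothesis, Gronwall's lemma yields $\Phi(t) \le \bigl(1\vee\norm{\xi}_{L^2(\P;\bfH_\rho)}^2\bigr)\exp(T(2a+b^2))$, and taking square roots gives exactly the claimed bound $1\vee g(t) \le \bigl(1\vee\norm{\xi}_{L^2(\P;\bfH_\rho)}\bigr)\exp\!\bigl(T[a + \tfrac12 b^2]\bigr)$, uniformly in $I$ and $t$. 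I expect the main obstacle to be the careful justification of the mild It\^o formula on the infinite-dimensional space $\bfH_\rho$---checking that $X^I$ is genuinely a $\bfH_\rho$-valued mild solution with respect to the restricted (isometric) semigroup and that the predictability and integrability hypotheses of Corollary~1 in \cite{DaPratoJentzenRoeckner2010} are met---together with the bookkeeping that upgrades the $\bfH_0$-isometry of $\e^{\bfA t}$ to the $\bfH_\rho$-isometry; the remaining estimates are routine once the ``$1\vee$ after $L^2(\P)$'' ordering is respected.
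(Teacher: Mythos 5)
Your proof is correct and follows essentially the same route as the paper's: the mild It\^{o} formula applied to $x \mapsto \norm{x}_{\bfH_\rho}^2$, the norm preservation of $\e^{\bfA t}$, the contractivity of $\bfP_I$, the Cauchy--Schwarz inequality, linear-growth bounds absorbed into $1 \vee (\cdot)$, and Gronwall's lemma, producing the identical constants. The only differences are bookkeeping: you take $L^2(\P)$-norms before passing to $1 \vee (\cdot)$ whereas the paper expands the affine bounds termwise and then majorizes by $1 \vee \E\bigl[\norm{X_s^I}_{\bfH_\rho}^2\bigr]$, and you explicitly justify upgrading the $\bfH_0$-isometry of Lemma~\ref{lem:length_preservation} to an $\bfH_\rho$-isometry (via commutation of $\bfLambda^\rho$ with $\e^{\bfA t}$), a point the paper leaves implicit when it cites that lemma.
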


\begin{proof}[Proof of Lemma~\ref{lem:finiteness}]
Corollary~1 in Da~Prato et al.~\cite{DaPratoJentzenRoeckner2010}, Lemma~\ref{lem:length_preservation}, and the Cauchy-Schwarz inequality ensure for all $ I \in \cP( \bbH ) $, $ t \in [0,T] $ that
\begin{equation}
\begin{split}
&\begin{aligned}
\E \bigl[ \norm[\big]{ X_t^I }_{ \bfH_\rho }^2 \bigr] & = \E \bigl[ \norm[\big]{ \e^{\bfA t} \bfP_{I} \xi }_{ \bfH_\rho }^2 \bigr] + 2 \int_{0}^{t} \E \bigl[ \bigl\langle \e^{ \bfA(t-s) } X_s^I, \e^{ \bfA(t-s) } \bfP_I \bfF(X_s^I) \bigr\rangle_{ \bfH_\rho } \bigr] \ds \\
& \quad + \int_{0}^{t} \E \bigl[ \norm[\big]{ \e^{ \bfA(t-s) } \bfP_I \bfB(X_s^I) }_{ L_2( U, \bfH_\rho ) }^2 \bigr] \ds \\
\end{aligned} \\
& \leq \E \bigl[ \norm{ \bfP_{I} \xi }_{ \bfH_\rho }^2 \bigr] + 2 \int_{0}^{t} \Bigl( \norm{ \bfP_I \bfF(0) }_{ \bfH_\rho } \E \bigl[ \norm[\big]{ X_s^I }_{ \bfH_\rho } \bigr] + \abs[\big]{ \bfP_I \bfF \vert_{ \bfH_\rho } }_{ \mathrm{Lip}^0( \bfH_\rho, \bfH_\rho ) } \E \bigl[ \norm[\big]{ X_s^I }_{ \bfH_\rho }^2 \bigr] \Bigr) \ds \\
& \quad + \int_{0}^{t} \Bigl( \norm{ \bfP_I \bfB(0) }_{ L_2( U, \bfH_\rho ) }^2 + 2 \norm{ \bfP_I \bfB(0) }_{ L_2( U, \bfH_\rho ) } \abs[\big]{ \bfP_I \bfB \vert_{ \bfH_\rho } }_{ \mathrm{Lip}^0( \bfH_\rho, L_2( U, \bfH_\rho ) ) } \E \bigl[ \norm[\big]{ X_s^I }_{ \bfH_\rho } \bigr] \\
& \qquad\qquad + \abs[\big]{ \bfP_I \bfB \vert_{ \bfH_\rho } }_{ \mathrm{Lip}^0( \bfH_\rho, L_2( U, \bfH_\rho ) ) }^2 \E \bigl[ \norm[\big]{ X_s^I }_{ \bfH_\rho }^2 \bigr] \Bigr) \ds \\
& \leq \E \bigl[ \norm{ \bfP_{I} \xi }_{ \bfH_\rho }^2 \bigr] + \Bigl( 2 \norm[\big]{ \bfP_I \bfF \vert_{ \bfH_\rho } }_{ \mathrm{Lip}^0( \bfH_\rho, \bfH_\rho ) } + \norm[\big]{ \bfP_I \bfB \vert_{ \bfH_\rho } }_{ \mathrm{Lip}^0( \bfH_\rho, L_2( U, \bfH_\rho ) ) }^2 \Bigr) \int_{0}^{t} 1 \vee \E \bigl[ \norm[\big]{ X_s^I }_{ \bfH_\rho }^2 \bigr] \ds.
\end{split} \raisetag{3.8cm}
\end{equation} 
Gronwall's lemma hence implies for all $ I \in \cP( \bbH ) $ that
\begin{equation}
\begin{split}
\sup_{t \in [0,T]} \bigl( 1 \vee \norm[\big]{ X_t^I }_{ L^2( \P; \bfH_\rho ) } \bigr) & \leq \bigl( 1 \vee \norm{ \bfP_{I} \xi }_{ L^2( \P; \bfH_\rho ) } \bigr) \\
& \quad \cdot \exp \Bigl( T \Bigl[ \norm[\big]{ \bfP_I \bfF \vert_{ \bfH_\rho }  }_{ \mathrm{Lip}^0( \bfH_\rho, \bfH_\rho ) } + \tfrac{1}{2} \norm[\big]{ \bfP_I \bfB \vert_{ \bfH_\rho } }_{ \mathrm{Lip}^0( \bfH_\rho, L_2( U, \bfH_\rho ) ) }^2 \Bigr] \Bigr) \\
& \leq \bigl( 1 \vee \norm{ \xi }_{ L^2( \P; \bfH_\rho ) } \bigr) \\
& \quad \cdot \exp \Bigl( T \Bigl[ \norm[\big]{\bfF \vert_{ \bfH_\rho }  }_{ \mathrm{Lip}^0( \bfH_\rho, \bfH_\rho ) } + \tfrac{1}{2} \norm[\big]{ \bfB \vert_{ \bfH_\rho } }_{ \mathrm{Lip}^0( \bfH_\rho, L_2( U, \bfH_\rho ) ) }^2 \Bigr] \Bigr).
\end{split}
\end{equation}
The proof of Lemma~\ref{lem:finiteness} is thus completed.
\end{proof}

\begin{lemma} \label{lem:perturbation}
Assume the setting in Section~\ref{subsec:WeakSetting} and let $ X^I \colon [ 0 , T ] \times \Omega \to \bfP_{I} ( \bfH_0 ) $, $ I \in \cP( \bbH ) $, be $( \cF_t )_{ t \in [0,T] } $-predictable stochastic processes such that for all $ I \in \cP( \bbH ) $, $ t \in [0,T] $ it holds that $ \sup_{ s \in [ 0 , T ] } \norm{ X_s^I }_{ L^2 ( \P ; \bfH_0 ) } < \infty $ and $ \P $-a.s.\ that
\begin{equation}
X_t^I =  \e^{\bfA t} \bfP_{I} \xi + \int_0^t \e^{ \bfA(t-s) } \bfP_I \bfF(X_s^I) \ds  + \int_0^t \e^{ \bfA(t-s) } \bfP_I \bfB(X_s^I) \dWs.
\end{equation}
Then it holds for all $ I, J \in \cP( \bbH ) $ that
\begin{equation} \label{eq:perturbation,1}
\begin{split}
\sup_{t \in [0,T]} \norm[\big]{ X_t^I - X_t^J }_{L^2(\P; \bfH_0)} &\leq \sqrt{2}\tp \cE_1\bigl[ \sqrt{2}\tp T \abs{ \bfP_{ I \cap J } \bfF }_{ \mathrm{Lip}^0( \bfH_0, \bfH_0 ) } + \sqrt{2T} \abs{ \bfP_{ I \cap J } \bfB }_{ \mathrm{Lip}^0( \bfH_0, L_2( U, \bfH_0 ) ) } \bigr] \\
&\quad \cdot \sup_{ t \in [0,T] } \norm[\big]{ \bfP_{ I \setminus J } X_t^I + \bfP_{ J \setminus I } X_t^J }_{ L^2( \P; \bfH_0 ) } < \infty.
\end{split}
\end{equation}
\end{lemma}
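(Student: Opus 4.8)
The plan is to study the difference process $D_t = X_t^I - X_t^J$ and to exploit the orthogonal decomposition of $\bfH_0$ induced by the three pairwise disjoint index sets $I \cap J$, $I \setminus J$, and $J \setminus I$. Writing $\bfP_I = \bfP_{I \cap J} + \bfP_{I \setminus J}$ and $\bfP_J = \bfP_{I \cap J} + \bfP_{J \setminus I}$, and using that $X^I$ and $X^J$ take values in $\bfP_I(\bfH_0)$ and $\bfP_J(\bfH_0)$ respectively (so that $\bfP_{I \setminus J} X_t^J = \bfP_{J \setminus I} X_t^I = 0$), I would first record the splitting $D_t = \bfP_{I \cap J} D_t + \bfP_{I \setminus J} X_t^I - \bfP_{J \setminus I} X_t^J$, in which the three summands lie in mutually orthogonal subspaces. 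Since $\bfP_{I \setminus J}$ and $\bfP_{J \setminus I}$ project onto orthogonal subspaces, the Pythagorean identity gives $\norm{\bfP_{I \setminus J} X_t^I - \bfP_{J \setminus I} X_t^J}_{\bfH_0} = \norm{\bfP_{I \setminus J} X_t^I + \bfP_{J \setminus I} X_t^J}_{\bfH_0}$, so the quantity controlling the right-hand side of \eqref{eq:perturbation,1} is exactly the $L^2$-norm of the ``residual'' part $R_t = \bfP_{I \setminus J} X_t^I - \bfP_{J \setminus I} X_t^J$ of $D_t$.

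The crux is to derive a closed mild equation for the ``matched'' part $G_t = \bfP_{I \cap J} D_t$. Applying $\bfP_{I \cap J}$ to the mild equations for $X^I$ and $X^J$, subtracting, and commuting $\bfP_{I \cap J}$ through the semigroup via Lemma~\ref{lem:commute}, I would use the projection identities $\bfP_{I \cap J}\bfP_I = \bfP_{I \cap J} = \bfP_{I \cap J}\bfP_J$. The initial-value contribution then cancels, because $\bfP_{I \cap J}(\bfP_I - \bfP_J)\xi = 0$, and the drift and diffusion integrands collapse to $\bfP_{I \cap J}(\bfF(X_s^I) - \bfF(X_s^J))$ and $\bfP_{I \cap J}(\bfB(X_s^I) - \bfB(X_s^J))$. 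This yields $G_t = \int_0^t \e^{\bfA(t-s)}\bfP_{I \cap J}(\bfF(X_s^I) - \bfF(X_s^J))\ds + \int_0^t \e^{\bfA(t-s)}\bfP_{I \cap J}(\bfB(X_s^I) - \bfB(X_s^J))\dWs$, an equation driven solely by the $\mathrm{Lip}^0$-regularity of $\bfP_{I \cap J}\bfF$ and $\bfP_{I \cap J}\bfB$.

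To estimate $\norm{G_t}_{L^2(\P;\bfH_0)}$ I would combine the isometry property $\sup_{r}\norm{\e^{\bfA r}}_{L(\bfH_0)} = 1$ from Lemma~\ref{lem:length_preservation} with the fact that $\bfP_{I \cap J}$ has operator norm at most one, the It\^o isometry for the stochastic integral, Minkowski's integral inequality together with Cauchy--Schwarz for the drift integral, and finally the Lipschitz bounds $\norm{\bfP_{I \cap J}(\bfF(X_s^I) - \bfF(X_s^J))}_{\bfH_0} \leq \abs{\bfP_{I \cap J}\bfF}_{\mathrm{Lip}^0(\bfH_0,\bfH_0)}\norm{D_s}_{\bfH_0}$ and its analogue for $\bfB$. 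After squaring and invoking the orthogonal identity $\norm{D_t}_{L^2(\P;\bfH_0)}^2 = \norm{G_t}_{L^2(\P;\bfH_0)}^2 + \norm{R_t}_{L^2(\P;\bfH_0)}^2$, this produces an integral inequality of the shape $\norm{D_t}_{L^2}^2 \leq \sup_{r}\norm{R_r}_{L^2}^2 + 2\bigl(T\,\abs{\bfP_{I \cap J}\bfF}_{\mathrm{Lip}^0}^2 + \abs{\bfP_{I \cap J}\bfB}_{\mathrm{Lip}^0}^2\bigr)\int_0^t \norm{D_s}_{L^2}^2\ds$, to which Gronwall's lemma applies. Taking square roots and using $\cE_1[x] = \exp(\nicefrac{x^2}{2})$ converts the resulting exponential into the claimed factor $\sqrt{2}\,\cE_1[\sqrt{2}\,T\abs{\bfP_{I\cap J}\bfF}_{\mathrm{Lip}^0} + \sqrt{2T}\,\abs{\bfP_{I\cap J}\bfB}_{\mathrm{Lip}^0}]$, whose squared argument dominates the Gronwall exponent, while finiteness of all quantities follows from the assumed $L^2$-bounds on $X^I$, $X^J$ and the global Lipschitz continuity of $\bfF$, $\bfB$. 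I expect the main obstacle to be the careful derivation of the reduced equation for $G_t$ --- in particular justifying the cancellation of the initial term and the commutation of $\bfP_{I\cap J}$ through the integrals rigorously, and ensuring the integrands are genuinely square-integrable so that the It\^o isometry applies --- whereas the passage from the Gronwall exponent to the precise $\cE_1$-form is routine constant bookkeeping.
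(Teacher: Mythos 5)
Your proposal is correct, but it takes a genuinely different route from the paper. The paper's proof is essentially a citation plus projection algebra: it applies Corollary~3.1 of Jentzen \& Kurniawan~\cite{JentzenKurniawan2015} --- a black-box perturbation estimate for mild solutions in which the Gronwall-type argument is already packaged into the constant $\sqrt{2}\tp\cE_1[\cdot]$ --- with the coefficients $\bfP_{I\cap J}\bfF$ and $\bfP_{I\cap J}\bfB$, and then identifies the resulting residual term, using $\bfP_{I\cap J}=\bfP_J\bfP_I=\bfP_I\bfP_J$, Lemma~\ref{lem:commute}, Lemma~\ref{lem:length_preservation}, and the support properties of $X^I$, $X^J$, as exactly $\bfP_{I\setminus J}X_t^I-\bfP_{J\setminus I}X_t^J$, whose norm equals that of the sum by orthogonality. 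You instead re-derive the perturbation estimate from scratch: the same projection algebra gives your closed mild equation for the matched component $G_t=\bfP_{I\cap J}(X_t^I-X_t^J)$ (the initial terms do cancel, since $\bfP_{I\cap J}\bfP_I\xi=\bfP_{I\cap J}\bfP_J\xi$), and then the isometry of $\e^{\bfA t}$ from Lemma~\ref{lem:length_preservation}, the It\^{o} isometry, Minkowski and Cauchy--Schwarz, the pointwise Pythagorean identity $\norm{X_t^I-X_t^J}_{\bfH_0}^2=\norm{G_t}_{\bfH_0}^2+\norm{R_t}_{\bfH_0}^2$ with $R_t=\bfP_{I\setminus J}X_t^I-\bfP_{J\setminus I}X_t^J$, and Gronwall's lemma yield $\sup_{t\in[0,T]}\norm{X_t^I-X_t^J}_{L^2(\P;\bfH_0)}\leq\exp\bigl(T^2\abs{\bfP_{I\cap J}\bfF}_{\mathrm{Lip}^0(\bfH_0,\bfH_0)}^2+T\abs{\bfP_{I\cap J}\bfB}_{\mathrm{Lip}^0(\bfH_0,L_2(U,\bfH_0))}^2\bigr)\sup_{t\in[0,T]}\norm{R_t}_{L^2(\P;\bfH_0)}$. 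Since $\cE_1[x]=\exp(\nicefrac{x^2}{2})$, this constant is dominated by the stated $\sqrt{2}\tp\cE_1\bigl[\sqrt{2}\tp T\abs{\bfP_{I\cap J}\bfF}_{\mathrm{Lip}^0(\bfH_0,\bfH_0)}+\sqrt{2T}\abs{\bfP_{I\cap J}\bfB}_{\mathrm{Lip}^0(\bfH_0,L_2(U,\bfH_0))}\bigr]$ (the latter carries the extra factor $\sqrt{2}$ and an extra cross term $2T^{\nicefrac{3}{2}}\abs{\bfP_{I\cap J}\bfF}\abs{\bfP_{I\cap J}\bfB}$ in the exponent), so your bound is in fact slightly sharper and implies \eqref{eq:perturbation,1}; finiteness follows as you say from $\sup_{s\in[0,T]}\norm{X_s^I}_{L^2(\P;\bfH_0)}<\infty$ and the global Lipschitz assumptions. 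In short: the paper's argument is three lines long because it reuses published machinery and inherits the $\cE_1$-form of the constant verbatim, whereas yours is self-contained and elementary and yields a marginally better constant, at the price of having to verify the closed equation for $G_t$, the square-integrability needed for the It\^{o} isometry, and the boundedness and measurability of $t\mapsto\norm{X_t^I-X_t^J}_{L^2(\P;\bfH_0)}^2$ required by Gronwall --- all routine here.
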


\begin{proof}[Proof of Lemma~\ref{lem:perturbation}]
Observe that Corollary~3.1 in Jentzen \& Kurniawan~\cite{JentzenKurniawan2015} and Lemma~\ref{lem:length_preservation} imply for all $ I, J \in \cP( \bbH ) $ that
\begin{equation}
\begin{split}
&\sup_{t \in [0,T]} \norm[\big]{ X_t^I - X_t^J }_{L^2( \P; \bfH_0 )} \leq \sqrt{2}\tp \cE_1\bigl[ \sqrt{2}\tp T \abs{ \bfP_{ I \cap J } \bfF }_{ \mathrm{Lip}^0( \bfH_0, \bfH_0 ) } + \sqrt{2T} \abs{ \bfP_{ I \cap J } \bfB }_{ \mathrm{Lip}^0( \bfH_0, L_2( U, \bfH_0 ) ) } \bigr] \\
&\quad \cdot \sup_{ t \in [0,T] } \norm[\bigg]{ X_t^I - \biggl[ \int_{0}^{t} \e^{ \bfA(t-s) } \bfP_{ I \cap J } \bfF(X_s^I) \ds + \int_{0}^{t} \e^{ \bfA(t-s) } \bfP_{ I \cap J } \bfB(X_s^I) \dWs \biggr] \\
&\qquad\qquad\qquad\!\+ + \biggl[ \int_{0}^{t} \e^{ \bfA(t-s) } \bfP_{ I \cap J } \bfF(X_s^J) \ds + \int_{0}^{t} \e^{ \bfA(t-s) } \bfP_{ I \cap J } \bfB(X_s^J) \dWs \biggr] - X_t^J }_{ L^2( \P; \bfH_0 ) } \\
& = \sqrt{2}\tp \cE_1\bigl[ \sqrt{2}\tp T \abs{ \bfP_{ I \cap J } \bfF }_{ \mathrm{Lip}^0( \bfH_0, \bfH_0 ) } + \sqrt{2T} \abs{ \bfP_{ I \cap J } \bfB }_{ \mathrm{Lip}^0( \bfH_0, L_2( U, \bfH_0 ) ) } \bigr] \\
&\quad \cdot \sup_{ t \in [0,T] } \biggl\| X_t^I - \bfP_J \biggl( \e^{ \bfA(t-s) } \bfP_I \xi + \int_{0}^{t} \e^{ \bfA(t-s) } \bfP_{ I } \bfF(X_s^I) \ds + \int_{0}^{t} \e^{ \bfA(t-s) } \bfP_{ I } \bfB(X_s^I) \dWs \biggr) \\
& \quad + \bfP_I \biggl( \e^{ \bfA(t-s) } \bfP_J \xi + \int_{0}^{t} \e^{ \bfA(t-s) } \bfP_{ J } \bfF(X_s^J) \ds + \int_{0}^{t} \e^{ \bfA(t-s) } \bfP_{ J } \bfB(X_s^J) \dWs \biggr) - X_t^J \biggr\|_{ L^2( \P; \bfH_0 ) } \\
& = \sqrt{2}\tp \cE_1\bigl[ \sqrt{2}\tp T \abs{ \bfP_{ I \cap J } \bfF }_{ \mathrm{Lip}^0( \bfH_0, \bfH_0 ) } + \sqrt{2T} \abs{ \bfP_{ I \cap J } \bfB }_{ \mathrm{Lip}^0( \bfH_0, L_2( U, \bfH_0 ) ) } \bigr] \\
& \quad \cdot \sup_{ t \in [0,T] } \norm[\big]{ \bfP_{ I \setminus J } X_t^I - \bfP_{ J \setminus I } X_t^J }_{ L^2( \P; \bfH_0 ) }.
\end{split}
\end{equation}
This implies \eqref{eq:perturbation,1} and thus completes the proof of Lemma~\ref{lem:perturbation}.
\end{proof}

\begin{remark} \label{rmk:existence_kolmogorov}
Assume the setting in Section \ref{subsec:WeakSetting}. Then Remark~\ref{rem:existence} shows that there exist up to modifications unique $ ( \cF_t )_{ t \in [ 0 , T ] } $-predictable stochastic processes $ X^{ J , x } \colon [ 0 , T ] \times \Omega \to \bfP_J( \bfH_0 ) $, $ x \in \bfP_J( \bfH_0 ) $, $ J \in \cP( \bbH ) $, satisfying that for all $ J \in \cP( \bbH ) $, $ x \in \bfP_J( \bfH_0 ) $, $ t \in [ 0 , T ] $ it holds that $ \sup_{ s \in [ 0 , T ] } \norm{ X_s^{ J , x } }_{ L^2 ( \P ; \bfH_{0} ) } < \infty $ and $ \P $-a.s.\ that
\begin{equation}
X_t^{ J, x } = \e^{ \bfA t } x + \int_0^t \e^{ \bfA(t-s) } \bfP_J \bfF( X_s^{ J, x } ) \ds + \int_0^t \e^{ \bfA(t-s) } \bfP_J \bfB( X_s^{ J, x } ) \dWs.
\end{equation}
\end{remark}

\begin{lemma} \label{lem:finiteness_kolmogorov}
Assume the setting in Section \ref{subsec:WeakSetting}, let $ X^{ J , x } \colon [ 0 , T ] \times \Omega \to \bfP_J( \bfH_0 ) $, $ x \in \bfP_J( \bfH_0 ) $, $ J \in \cP_0( \bbH ) $, be $ ( \cF_t )_{ t \in [ 0 , T ] } $-predictable stochastic processes such that for all $ J \in \cP_0( \bbH ) $, $ x \in \bfP_J( \bfH_0 ) $, $ t \in [ 0 , T ] $ it holds that $ \sup_{ s \in [ 0 , T ] } \norm{ X_s^{ J , x } }_{ L^2 ( \P ; \bfP_J( \bfH_{0} ) ) } < \infty $ and $ \P $-a.s.\ that
\begin{equation}
X_t^{ J, x } = \e^{ \bfA t } x + \int_0^t \e^{ \bfA(t-s) } \bfP_J \bfF( X_s^{ J, x } ) \ds + \int_0^t \e^{ \bfA(t-s) } \bfP_J \bfB( X_s^{ J, x } ) \dWs,
\end{equation}
let $ \phi \in C^{2}_\mathrm{b}( \bfH_0 , \R ) $, and let $ u^J \colon [0,T] \times \bfP_J( \bfH_0 ) \to \R $, $ J \in \cP_0( \bbH ) $, be the mappings which satisfy for all $ J \in \cP_0( \bbH ) $, $ (t,x) \in [0,T] \times \bfP_J( \bfH_0 ) $ that $ u^J(t,x) = \E\bigl[ \phi \bigl(X_t^{ J, x }\bigr) \bigr] $. Then for all $ J \in \cP_0( \bbH ) $ it holds that $ u^J \in C^{ 1 , 2 }( [0,T] \times \bfP_J( \bfH_0 ), \R ) $ and
\begin{align}
\begin{split} \label{eq:finiteness_kolmogorov,1}
& \sup_{ K \in \cP_0( \bbH ) } \sup_{t \in [0,T]} \abs{ u^K( t, \cdot ) }_{ C_\mathrm{b}^1( \bfP_K( \bfH_0), \R ) } \\
& \leq \abs{ \phi }_{ C_{ \mathrm{b} }^1( \bfH_0, \R ) } \exp \bigl( T \bigl[ \abs{ \bfF }_{ \mathrm{Lip}^0( \bfH_0, \bfH_0 ) } + \tfrac{1}{2} \abs{ \bfB }_{ \mathrm{Lip}^0( \bfH_0, L_2( U, \bfH_0 ) ) }^2 \bigr] \bigr) < \infty, \\
\end{split} \\
\begin{split} \label{eq:finiteness_kolmogorov,2}
& \sup_{ K \in \cP_0( \bbH ) } \sup_{t \in [0,T]} \abs{ u^K( t, \cdot ) }_{ C_\mathrm{b}^2( \bfP_K( \bfH_0), \R ) } \leq \norm{ \phi }_{ C_{ \mathrm{b} }^2( \bfH_0, \R ) } \Bigl( 1 \vee \bigl[ T \bigl( C_{ \bfF }^2 + 2 C_{ \bfB }^2 \bigr) \bigr]^{ \nicefrac{1}{2} } \Bigr) \\
& \quad \cdot \exp \bigl( T \bigl[ \tfrac{1}{2} + 3 \abs{ \bfF }_{ \mathrm{Lip}^0( \bfH_0, \bfH_0 ) } + 4 \abs{ \bfB }_{ \mathrm{Lip}^0( \bfH_0, L_2( U, \bfH_0 ) ) }^2 \bigr] \bigr) < \infty.
\end{split} 
\end{align}
\end{lemma}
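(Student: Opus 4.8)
The plan is to deduce everything from Lemma~\ref{lem:kolmogorov} applied separately on each finite-dimensional invariant subspace $\bfP_J(\bfH_0)$, and then to render the resulting estimates uniform in $J$ by comparing the restricted seminorms with the global quantities appearing on the right-hand sides of \eqref{eq:finiteness_kolmogorov,1} and \eqref{eq:finiteness_kolmogorov,2}. The case $J = \emptyset$ (where $\bfP_J(\bfH_0) = \{0\}$ and $u^J$ is constant) is trivial, so I would treat $J \neq \emptyset$ from now on.

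First I would fix $J \in \cP_0(\bbH)$ and observe that $\bfP_J(\bfH_0)$ is a finite-dimensional $\R$-Hilbert space which, being spanned by finitely many eigenmodes, satisfies $\bfP_J(\bfH_0) \subseteq \bigcap_{r \in \R} \bfH_r$. By Lemma~\ref{lem:commute} the subspace $\bfP_J(\bfH_0)$ is invariant under $\bfA$ and under $\e^{\bfA t}$, so that $\bfA\vert_{\bfP_J(\bfH_0)} \in L(\bfP_J(\bfH_0))$ and $\e^{\bfA t}\vert_{\bfP_J(\bfH_0)} = \e^{(\bfA\vert_{\bfP_J(\bfH_0)})\+ t}$. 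Moreover $\bfP_J \bfF\vert_{\bfP_J(\bfH_0)} \in C_\mathrm{b}^2(\bfP_J(\bfH_0), \bfP_J(\bfH_0))$ and $\bfP_J\bfB\vert_{\bfP_J(\bfH_0)} \in C_\mathrm{b}^2(\bfP_J(\bfH_0), L_2(U, \bfP_J(\bfH_0)))$, since $\bfF\vert_{\cap_{r\in\R} \bfH_r}$ and $\bfB\vert_{\cap_{r\in\R} \bfH_r}$ are $C_\mathrm{b}^2$ and $\bfP_J$ is a bounded projection. These are precisely the hypotheses of Lemma~\ref{lem:kolmogorov} with $(H, A, F, B, \phi)$ replaced by $(\bfP_J(\bfH_0), \bfA\vert_{\bfP_J(\bfH_0)}, \bfP_J \bfF\vert_{\bfP_J(\bfH_0)}, \bfP_J \bfB\vert_{\bfP_J(\bfH_0)}, \phi\vert_{\bfP_J(\bfH_0)})$, and the finite-dimensional equation satisfied by $X^{J,x}$ is of the required form. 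Lemma~\ref{lem:kolmogorov}(i) then yields $u^J \in C^{1,2}([0,T] \times \bfP_J(\bfH_0), \R)$, and part (iii) yields the $C_\mathrm{b}^1$- and $C_\mathrm{b}^2$-estimates for each fixed $J$.

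The remaining work is to make these estimates uniform in $K \in \cP_0(\bbH)$, for which I would use three ingredients. Lemma~\ref{lem:length_preservation} gives $\sup_{s \in [0,\infty)} \norm{\e^{\bfA s}}_{L(\bfH_0)} = 1$, hence $\sup_{s \in [0,\infty)}\norm{\e^{\bfA s}\vert_{\bfP_K(\bfH_0)}}_{L(\bfP_K(\bfH_0))} \leq 1$, which collapses every $\sup_s\norm{\e^{As}}$-factor in Lemma~\ref{lem:kolmogorov}(iii). Restriction to a subspace can only decrease seminorms, so $\abs{\phi\vert_{\bfP_K(\bfH_0)}}_{C_\mathrm{b}^j(\bfP_K(\bfH_0),\R)} \leq \abs{\phi}_{C_\mathrm{b}^j(\bfH_0, \R)}$ for $j \in \{1,2\}$ and $\norm{\phi\vert_{\bfP_K(\bfH_0)}}_{C_\mathrm{b}^2(\bfP_K(\bfH_0),\R)} \leq \norm{\phi}_{C_\mathrm{b}^2(\bfH_0,\R)}$. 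Finally, since $\bfP_K$ is an orthogonal projection with $\norm{\bfP_K}_{L(\bfH_0)} \leq 1$ and the first derivatives of $\bfF, \bfB$ are bounded by the respective $\mathrm{Lip}^0$-seminorms, I obtain $\abs{\bfP_K \bfF\vert_{\bfP_K(\bfH_0)}}_{C_\mathrm{b}^1(\bfP_K(\bfH_0),\bfP_K(\bfH_0))} \leq \abs{\bfF}_{\mathrm{Lip}^0(\bfH_0,\bfH_0)}$ and $\abs{\bfP_K \bfB\vert_{\bfP_K(\bfH_0)}}_{C_\mathrm{b}^1} \leq \abs{\bfB}_{\mathrm{Lip}^0(\bfH_0, L_2(U,\bfH_0))}$. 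For the second-order seminorms the key point is that every $x, v_1, v_2 \in \bfP_K(\bfH_0)$ lies in $\bigcap_{r \in \R}\bfH_r$, so the very definitions of $C_{\bfF}$ and $C_{\bfB}$ give $\abs{\bfP_K \bfF\vert_{\bfP_K(\bfH_0)}}_{C_\mathrm{b}^2} \leq C_{\bfF}$ and $\abs{\bfP_K \bfB\vert_{\bfP_K(\bfH_0)}}_{C_\mathrm{b}^2} \leq C_{\bfB}$. Substituting these bounds into Lemma~\ref{lem:kolmogorov}(iii) and taking the supremum over $K$ produces exactly \eqref{eq:finiteness_kolmogorov,1} and \eqref{eq:finiteness_kolmogorov,2}.

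I expect the main obstacle to be the bookkeeping that makes the constants genuinely independent of $K$: one must ensure both that the $\sup_s\norm{\e^{As}}$-contributions are uniformly controlled (via the isometry property of $\e^{\bfA t}$ from Lemma~\ref{lem:length_preservation}) and, more delicately, that the second-derivative seminorms of the projected coefficients are bounded by the dimension-free constants $C_{\bfF}, C_{\bfB}$. This last point is exactly why these constants are defined as suprema over the smooth core $\bigcap_{r \in \R}\bfH_r$ rather than over individual Galerkin subspaces, and it is what allows the a priori bounds to be passed to the limit $K = \bbH$ later in the weak error analysis.
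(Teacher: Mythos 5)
Your proposal is correct and follows essentially the same route as the paper's proof: restrict to the finite-dimensional invariant subspaces $\bfP_J(\bfH_0)$, verify that $\bfP_J\bfF\vert_{\bfP_J(\bfH_0)}$ and $\bfP_J\bfB\vert_{\bfP_J(\bfH_0)}$ are $C^2_{\mathrm{b}}$, apply Lemma~\ref{lem:kolmogorov} together with Lemma~\ref{lem:length_preservation}, and then bound the restricted seminorms by the $J$-independent quantities $\abs{\phi}_{C^j_{\mathrm{b}}}$, $\abs{\bfF}_{\mathrm{Lip}^0}$, $\abs{\bfB}_{\mathrm{Lip}^0}$, $C_{\bfF}$, and $C_{\bfB}$. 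Your explicit appeal to Lemma~\ref{lem:commute} (to identify $\e^{\bfA t}\vert_{\bfP_J(\bfH_0)}$ with the semigroup of the restricted generator) and your treatment of the degenerate case $J=\emptyset$ simply make precise details that the paper leaves implicit.
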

\begin{proof}[Proof of Lemma~\ref{lem:finiteness_kolmogorov}]
The assumptions that $ \bfF \vert_{ \bigcap_{ r \in \R } \bfH_r } \in C_{\mathrm{b}}^2( \bigcap_{ r \in \R } \bfH_r, \bfH_0 ) $ and $ \bfB \vert_{ \bigcap_{ r \in \R } \bfH_r } \in C_{ \mathrm{b} }^2( \bigcap_{ r \in \R } $ $ \bfH_r, L_2( U, \bfH_0 ) ) $ and the fact that for all $ J \in \cP_0( \bbH ) $ it holds that $ \bfP_J( \bfH_0 ) $ is a finite-dimensional $ \R $-vector space ensure that $ \bfP_J \bfF \vert_{ \bfP_J( \bfH_0 ) } \in C_{\mathrm{b}}^2( \bfP_J( \bfH_0 ), \bfP_J( \bfH_0 ) ) $ and $ \bfP_J \bfB \vert_{ \bfP_J( \bfH_0 ) } \in C_{ \mathrm{b} }^2( \bfP_J( \bfH_0 ), $ $ L_2( U, \bfP_J( \bfH_0 ) ) ) $. Lemma~\ref{lem:kolmogorov} and Lemma~\ref{lem:length_preservation} then prove for all $ J \in \cP_0( \bbH ) $ that
\begin{equation}
\begin{split}
& \sup_{t \in [0,T]} \abs{ u^J( t, \cdot ) }_{ C_\mathrm{b}^1( \bfP_J( \bfH_0 ) , \R ) } \leq \abs[\big]{ \phi \vert_{ \bfP_J( \bfH_0 ) } }_{ C_{ \mathrm{b} }^1( \bfP_J( \bfH_0 ), \R ) } \\
& \quad \cdot \exp \bigl( T \bigl[ \abs[\big]{ \bfP_J \bfF \vert_{ \bfP_J( \bfH_0 ) } }_{ C_{ \mathrm{b} }^1( \bfP_J( \bfH_0 ), \bfP_J( \bfH_0 ) ) } + \tfrac{1}{2} \abs[\big]{ \bfP_J \bfB \vert_{ \bfP_J( \bfH_0 ) } }_{ C_{ \mathrm{b} }^1( \bfP_J( \bfH_0 ), L_2( U, \bfP_J( \bfH_0 ) ) ) }^2 \bigr] \bigr)
\end{split}
\end{equation}
and that
\begin{equation}
\begin{split}
& \sup_{t \in [0,T]} \abs{ u^J( t, \cdot ) }_{ C_\mathrm{b}^2( \bfP_J( \bfH_0 ), \R ) } \leq \norm[\big]{ \phi \vert_{ \bfP_J( \bfH_0 ) } }_{ C_{ \mathrm{b} }^2( \bfP_J( \bfH_0 ), \R ) } \\
& \quad \cdot \Bigl( 1 \vee \Bigl[ T \bigl( \abs[\big]{ \bfP_J \bfF \vert_{ \bfP_J( \bfH_0 ) } }_{ C_{ \mathrm{b} }^2( \bfP_J( \bfH_0 ), \bfP_J( \bfH_0 ) ) }^2 + 2 \abs[\big]{ \bfP_J \bfB \vert_{ \bfP_J( \bfH_0 ) } }_{ C_{ \mathrm{b} }^2( \bfP_J( \bfH_0 ), L_2( U, \bfP_J( \bfH_0 ) ) ) }^2 \bigr) \Bigr]^{ \nicefrac{1}{2} } \Bigr) \\
& \quad \cdot \exp \bigl( T \bigl[ \tfrac{1}{2} + 3 \abs[\big]{ \bfP_J \bfF \vert_{ \bfP_J( \bfH_0 ) } }_{ C_{ \mathrm{b} }^1( \bfP_J( \bfH_0 ), \bfP_J( \bfH_0 ) ) } + 4 \abs[\big]{ \bfP_J \bfB \vert_{ \bfP_J( \bfH_0 ) } }_{ C_{ \mathrm{b} }^1( \bfP_J( \bfH_0 ), L_2( U, \bfP_J( \bfH_0 ) ) ) }^2 \bigr] \bigr) < \infty.
\end{split}
\end{equation}
This implies \eqref{eq:finiteness_kolmogorov,1} and \eqref{eq:finiteness_kolmogorov,2} and thus completes the proof of Lemma~\ref{lem:finiteness_kolmogorov}.
\end{proof}

In the proof of the main result of this article, Theorem~\ref{thm:weakrates} below, we use the following elementary and well-known lemma.

\begin{lemma} \label{lem:limsup}
Let $ p \in [0, \infty)$, let $ \cJ_n$, $ n \in \N_0 $, be sets such that $ \bigcup_{n \in \N} \cJ_n = \cJ_0 $ and such that for all $ n \in \N $ it holds that $ \cJ_n \subseteq \cJ_{n+1} $, and let $ g \colon \cJ_0 \to (0,\infty) $ be a mapping with the property that $ \sum_{h \in \cJ_0} (g_h)^p < \infty $. Then
\begin{equation} \label{eq:limsup,3}
\lim_{n \to \infty} \sup \bigl( \{ g_h \colon h \in \cJ_0 \setminus \cJ_n \} \cup \{ 0 \} \bigr) = 0.
\end{equation}
\end{lemma}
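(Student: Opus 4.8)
The plan is to exploit the fact that a summable family of strictly positive numbers can have only finitely many members above any prescribed threshold, together with the fact that the increasing exhaustion $ \cJ_1 \subseteq \cJ_2 \subseteq \cdots $ with $ \bigcup_{n\in\N}\cJ_n = \cJ_0 $ must eventually swallow any finite subset of $ \cJ_0 $. Combining these two observations shows that the tail suprema fall below any given $ \epsilon $ for all sufficiently large $ n $, which is exactly \eqref{eq:limsup,3}.

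Concretely, I would first fix $ \epsilon \in (0,\infty) $ and consider the super-level set $ F_\epsilon = \{ h \in \cJ_0 \colon g_h \geq \epsilon \} $. In the case $ p \in (0,\infty) $, the assumption $ \sum_{h\in\cJ_0}(g_h)^p < \infty $ forces $ F_\epsilon $ to be finite: if it were infinite, then the partial sum $ \sum_{h\in F_\epsilon}(g_h)^p \geq \sum_{h\in F_\epsilon}\epsilon^p $ would already be infinite, contradicting summability (here one uses that the unordered sum is the supremum of its finite partial sums). The degenerate case $ p = 0 $ must be handled separately: since $ g_h > 0 $ gives $ (g_h)^0 = 1 $ for every $ h $, the hypothesis $ \sum_{h\in\cJ_0} 1 < \infty $ forces $ \cJ_0 $ itself to be finite, and then the argument below applies verbatim with $ F_\epsilon $ replaced by $ \cJ_0 $.

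Next, since $ F_\epsilon $ is a finite subset of $ \cJ_0 = \bigcup_{n\in\N}\cJ_n $, each of its finitely many elements lies in some $ \cJ_{n} $, so taking $ N $ to be the maximum of these finitely many indices yields $ F_\epsilon \subseteq \cJ_N $. The monotonicity $ \cJ_n \subseteq \cJ_{n+1} $ then gives $ F_\epsilon \subseteq \cJ_n $ for every $ n \geq N $, whence $ \cJ_0 \setminus \cJ_n \subseteq \cJ_0 \setminus F_\epsilon = \{ h \in \cJ_0 \colon g_h < \epsilon \} $. Consequently, for all $ n \geq N $ every element of $ \{ g_h \colon h \in \cJ_0 \setminus \cJ_n \} $ is strictly below $ \epsilon $, and therefore $ \sup\bigl( \{ g_h \colon h \in \cJ_0 \setminus \cJ_n \} \cup \{ 0 \} \bigr) \leq \epsilon $. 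As $ \epsilon \in (0,\infty) $ was arbitrary and the suprema are nonnegative, this establishes \eqref{eq:limsup,3}.

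I do not expect any serious obstacle, as the statement is elementary; the only points that genuinely require care are the interpretation of the unordered sum $ \sum_{h\in\cJ_0}(g_h)^p $ as a supremum over finite partial sums (which is what legitimizes the finiteness of $ F_\epsilon $) and the separate bookkeeping for $ p = 0 $. One may additionally note that, because $ \cJ_n $ is increasing, the complements $ \cJ_0 \setminus \cJ_n $ are decreasing and hence the sequence of tail suprema is non-increasing, so the limit in \eqref{eq:limsup,3} exists automatically; this monotonicity is, however, not needed for the $ \epsilon $-argument above.
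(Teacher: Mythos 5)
Your proof is correct, but it follows a genuinely different route from the paper's. You argue via super-level sets: for fixed $\epsilon > 0$ the set $F_\epsilon = \{ h \in \cJ_0 \colon g_h \geq \epsilon \}$ must be finite by summability, the increasing exhaustion $\cJ_1 \subseteq \cJ_2 \subseteq \cdots$ eventually contains $F_\epsilon$, and hence the tail suprema drop below $\epsilon$. The paper instead uses a single tail-sum inequality: since all terms are positive, $\bigl[ \sup \bigl( \{ g_h \colon h \in \cJ_0 \setminus \cJ_n \} \cup \{ 0 \} \bigr) \bigr]^p \leq \sum_{h \in \cJ_0 \setminus \cJ_n} (g_h)^p = \sum_{h \in \cJ_0} (g_h)^p - \sum_{h \in \cJ_n} (g_h)^p$, and then invokes dominated convergence to conclude that the partial sums $\sum_{h \in \cJ_n} (g_h)^p$ converge to the full sum, so the right-hand side tends to zero. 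Your argument is somewhat more self-contained, using only the characterization of unordered sums of nonnegative terms as suprema of finite partial sums, and it isolates the structural reason the lemma holds (only finitely many $g_h$ exceed any threshold); the paper's argument is more compact, dispatching the result in essentially two displayed lines at the cost of citing a convergence theorem. Both treatments handle $p = 0$ the same way in substance: the hypothesis then forces $\cJ_0$ to be finite, which the paper dismisses as the case where the claim "is clear" and which you spell out explicitly. Your closing observation that the tail suprema are non-increasing (so the limit exists a priori) is accurate but, as you note, not needed.
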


\begin{proof}[Proof of Lemma~\ref{lem:limsup}]
Without loss of generality we assume that $ p \in (0,\infty)$ (otherwise \eqref{eq:limsup,3} is clear). Then observe that for all $ n \in \N $ it holds that
\begin{equation} \label{eq:limsup,1}
\sum_{h \in \cJ_n} (g_h)^p + \bigl[ \sup \bigl( \{ g_h \colon h \in \cJ_0 \setminus \cJ_n \} \cup \{ 0 \} \bigr) \bigr]^p \leq \sum_{h \in \cJ_n} (g_h)^p + \sum_{h \in \cJ_0 \setminus \cJ_n} (g_h)^p = \sum_{h \in \cJ_0} (g_h)^p < \infty.
\end{equation}
Moreover, note that Lebesgue's theorem of dominated convergence proves that
\begin{equation} \label{eq:limsup,2}
\lim_{n \to \infty} \sum_{h \in \cJ_n} (g_h)^p = \sum_{h \in \cJ_0} (g_h)^p.
\end{equation}
Letting $ n \to \infty $ in \eqref{eq:limsup,1} and combining this with \eqref{eq:limsup,2} complete the proof of Lemma~\ref{lem:limsup}.
\end{proof}

\begin{theorem}\label{thm:weakrates}
Assume the setting in Section \ref{subsec:WeakSetting}, let $ X^I \colon [ 0 , T ] \times \Omega \to \bfP_{I} ( \bfH_\rho ) $, $ I \in \cP( \bbH ) $, and $ X^{ J , x } \colon [ 0 , T ] \times \Omega \to \bfP_J( \bfH_0 ) $, $ x \in \bfP_J( \bfH_0 ) $, $ J \in \cP_0( \bbH ) $, be $( \cF_t )_{ t \in [0,T] } $-predictable stochastic processes such that for all $ I \in \cP( \bbH ) $, $ J \in \cP_0( \bbH ) $, $ x \in \bfP_J( \bfH_0 ) $, $ t \in [ 0 , T ] $ it holds that $ \sup_{ s \in [ 0 , T ] } \bigl( \norm{ X_s^I }_{ L^2 ( \P ; \bfH_\rho ) } + \norm{ X_s^{ J , x } }_{ L^2 ( \P ; \bfH_{0} ) } \bigr) < \infty $ and $ \P $-a.s.\ that
\begin{align}
X_t^I & =  \e^{\bfA t} \bfP_{I} \xi + \int_0^t \e^{ \bfA(t-s) } \bfP_I \bfF(X_s^I) \ds  + \int_0^t \e^{ \bfA(t-s) } \bfP_I \bfB(X_s^I) \dWs,  \\
X_t^{ J, x } & = \e^{ \bfA t } x + \int_0^t \e^{ \bfA(t-s) } \bfP_J \bfF( X_s^{ J, x } ) \ds + \int_0^t \e^{ \bfA(t-s) } \bfP_J \bfB( X_s^{ J, x } ) \dWs,
\end{align}
let $ \phi \in C^{2}_\mathrm{b}( \bfH_0 , \R ) $, and let $ u^J \colon [0,T] \times \bfP_J( \bfH_0 ) \to \R $, $ J \in \cP_0( \bbH ) $, be the mappings which satisfy for all $ J \in \cP_0( \bbH ) $, $ (t,x) \in [0,T] \times \bfP_J( \bfH_0 ) $ that $ u^J(t,x) = \E\bigl[ \phi \bigl(X_t^{ J, x }\bigr) \bigr] $. Then it holds for all $ I \in \cP( \bbH ) \setminus \{ \bbH \} $ that
\begin{equation}
\begin{split}
& \abs[\big]{ \E \bigl[ \phi \bigl(X_T^\bbH \bigr)\bigr] - \E \bigl[\phi \bigl(X^I_T \bigr) \bigr] } \\
& \leq \biggl(  \abs{ \phi }_{ \mathrm{Lip}^{0} ( \bfH_0, \R ) } \norm{ \xi }_{ L^1(\P ; \bfH_{ 2(\gamma - \beta ) } ) } \exp \bigl( T \bigl[ \abs{ \bfF }_{ \mathrm{Lip}^0( \bfH_0, \bfH_0 ) } + \tfrac{1}{2} \abs{ \bfB }_{ \mathrm{Lip}^0( \bfH_0, L_2( U, \bfH_0 ) ) }^2 \bigr] \bigr) \\
&\quad + \biggl[ \sup_{K \in \cP_0( \bbH )} \sup_{ t \in [ 0 , T ] } \abs[\big]{ u^K (t, \cdot ) }_{ C_{\mathrm{b}}^1 ( \bfP_K( \bfH_0 ), \R ) } \biggr] \sup_{ K \in \cP_0( \bbH ) } \int_{0}^T \E \bigl[ \norm[\big]{ \bfF(X_s^K) }_{ \bfH_{ 2(\gamma-\beta) } } \bigr] \ds \\
&\quad + \norm{ \bfLambda^{ -\beta } }_{ L_2(\bfH_0) }^2 \biggl[ \sup_{ K \in \cP_0( \bbH ) } \sup_{ t \in [ 0 , T ] } \abs[\big]{ u^K( t, \cdot ) }_{ C_{\mathrm{b}}^2 ( \bfP_K( \bfH_0 ), \R ) } \biggr] \sup_{ K \in \cP_0( \bbH ) } \int_{0}^{T} \E \bigl[ \norm[\big]{ \bfB( X_s^K ) }_{L(U, \bfH_{\gamma})}^2 \bigr] \ds \biggr) \\
& \quad \cdot \biggl[ \inf_{ h \in \bbH \setminus I } \abs{ \lambda_h } \biggr]^{ \beta - \gamma } < \infty.
\end{split}
\end{equation}
\end{theorem}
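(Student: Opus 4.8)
The plan is to first establish the estimate for finite index sets and then pass to the limit. Concretely, I would fix finite sets $I', K \in \cP_0(\bbH)$ with $I' \subseteq K$, compare $X^{K}$ with $X^{I'}$ by means of the associated Kolmogorov value function $u^{K}$, and only at the very end let $K \uparrow \bbH$ and $I' \uparrow I$. By Lemma~\ref{lem:finiteness_kolmogorov} the map $u^{K}$ lies in $C^{1,2}([0,T] \times \bfP_{K}(\bfH_0), \R)$ and satisfies the Kolmogorov equation $\tfrac{\partial}{\partial t} u^{K} = \L^{K} u^{K}$, where $\L^{K}$ is the generator of $X^{K,\cdot}$ (drift $\bfA x + \bfP_{K}\bfF(x)$, diffusion $\bfP_{K}\bfB(x)$). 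Since $I' \subseteq K$, the process $X^{I'}$ takes values in $\bfP_{I'}(\bfH_0) \subseteq \bfP_{K}(\bfH_0)$, so $u^{K}(T-s, X_s^{I'})$ is well defined. The a~priori finiteness of the integrals $\int_0^T \E[\norm{\bfF(X_s^{K})}_{\bfH_{2(\gamma-\beta)}}]\ds$ and $\int_0^T \E[\norm{\bfB(X_s^{K})}_{L(U,\bfH_\gamma)}^2]\ds$ appearing on the right-hand side follows from Lemma~\ref{lem:finiteness} (whose proof rests on the mild It\^o formula, Corollary~1 in \cite{DaPratoJentzenRoeckner2010}) together with $\bfF\vert_{\bfH_\rho}, \bfB\vert_{\bfH_\rho}$ being Lipschitz.

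The core is the telescoping identity obtained from $s \mapsto \E[ u^{K}(T-s, X_s^{I'}) ]$. Its endpoints give $\E[\phi(X_T^{I'})]$ (at $s=T$, using $u^{K}(0,\cdot) = \phi$) and $\E[u^{K}(T, \bfP_{I'}\xi)]$ (at $s=0$), while $\E[\phi(X_T^{K})] = \E[u^{K}(T, \bfP_{K}\xi)]$ by the Markov property. Differentiating in $s$ via It\^o's formula and inserting the Kolmogorov equation, the $\bfA$-contributions cancel (here Lemma~\ref{lem:commute} and Lemma~\ref{lem:length_preservation} handle the semigroup), leaving three contributions: an initial-value term $\E[u^{K}(T,\bfP_{K}\xi) - u^{K}(T,\bfP_{I'}\xi)]$, a drift term, and a diffusion term, each paired with a derivative of $u^{K}$ evaluated along $X^{I'}$.

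I would estimate the first two contributions directly. For the initial-value term, the fundamental theorem of calculus and Lemma~\ref{lem:finiteness_kolmogorov} bound it by $[\sup_{K,t}\abs{u^{K}(t,\cdot)}_{C_\mathrm{b}^1}]\,\E[\norm{\bfP_{K \setminus I'}\xi}_{\bfH_0}]$, and since $\xi \in \bfH_{2(\gamma-\beta)}$ one has $\norm{\bfP_{K \setminus I'}\xi}_{\bfH_0} \le [\inf_{h \in K \setminus I'}\abs{\lambda_h}]^{\beta-\gamma}\,\norm{\bfP_{K\setminus I'}\xi}_{\bfH_{2(\gamma-\beta)}}$, which yields the first summand with its eigenvalue factor. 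The drift contribution equals $-\int_0^T \E[(\tfrac{\partial}{\partial x}u^{K})(T-s,X_s^{I'})\, \bfP_{K \setminus I'}\bfF(X_s^{I'})]\ds$; bounding the linear form by $\abs{u^{K}}_{C_\mathrm{b}^1}$ and using $\bfF\vert_{\bfH_\rho}\in\mathrm{Lip}^0(\bfH_\rho,\bfH_{2(\gamma-\beta)})$ together with the same interpolation estimate for $\bfP_{K\setminus I'}$ produces the second summand.

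The diffusion contribution is the heart of the proof and the main obstacle. Letting $\cJ$ be the symmetric operator on $\bfH_0$ associated (via $\cJ^{\bfH_0}$) to the bilinear form $(\tfrac{\partial^2}{\partial x^2}u^{K})(T-s,X_s^{I'})$, so that $\norm{\cJ}_{L(\bfH_0)} \le \abs{u^{K}}_{C_\mathrm{b}^2}$, the integrand takes the form $\tfrac12 \tr_{\bfH_0}(\cJ[\bfP_{K}\bfB\bfB^{\star}\bfP_{K} - \bfP_{I'}\bfB\bfB^{\star}\bfP_{I'}])$ with $\bfB = \bfB(X_s^{I'})$. The plan is to control this through the H\"older inequality for Schatten norms, pairing the operator norm of $\cJ$ with the trace norm of the bracket, and to factor $\bfP_{K\setminus I'}\bfB = (\bfP_{K\setminus I'}\bfLambda^{-\gamma})(\bfLambda^{\gamma}\bfB)$: since $\norm{\bfLambda^{\gamma}\bfB}_{L(U,\bfH_0)} = \norm{\bfB}_{L(U,\bfH_\gamma)}$ and, using $\beta \le \gamma$ and $\bfLambda^{-\beta}\in L_2(\bfH_0)$, $\norm{\bfP_{K\setminus I'}\bfLambda^{-\gamma}}_{L_2(\bfH_0)}^2 \le [\inf_{h\in K\setminus I'}\abs{\lambda_h}]^{\beta-\gamma}\,\norm{\bfLambda^{-\beta}}_{L_2(\bfH_0)}^2$, one obtains the third summand with the full power of the eigenvalue factor. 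The delicate point is that, expanding around $\bfP_{K}=\bfP_{I'}+\bfP_{K\setminus I'}$, the bracket also produces mixed terms coupling $\bfP_{I'}\bfB$ with $\bfP_{K\setminus I'}\bfB$; a direct Schatten--H\"older bound on these recovers only half of the power $(\gamma-\beta)$, so the estimate must be organised so that every occurrence of the difference effectively carries two factors of $\bfP_{K\setminus I'}$, which is exactly where the exponent restriction $\beta \in (\nicefrac{\gamma}{2},\gamma]$ and the precise Schatten bookkeeping are essential. Finally I would let $K \uparrow \bbH$ and $I' = I \cap K \uparrow I$: Lemma~\ref{lem:perturbation} and Lemma~\ref{lem:limsup} give $X^{K}\to X^{\bbH}$ and $X^{I'}\to X^{I}$ in $\sup_{t\in[0,T]}L^2(\P;\bfH_0)$, so (as $\phi$ is Lipschitz) the left-hand side converges, while $\inf_{h\in K\setminus I'}\abs{\lambda_h}$ decreases monotonically to $\inf_{h\in\bbH\setminus I}\abs{\lambda_h}$ and the supremum-type quantities on the right-hand side only grow, so the estimate passes to the limit.
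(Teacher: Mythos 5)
Your overall architecture coincides with the paper's proof: the value function $u^K$ of the larger finite-dimensional system, It\^{o}'s formula along $X^{I'}$ with cancellation of the $\bfA$-terms against the Kolmogorov equation, the resulting initial-value, drift, and diffusion error terms, and a final passage to the limit using Lemma~\ref{lem:perturbation}, dominated convergence, and Lemma~\ref{lem:limsup}. Your treatment of the initial-value and drift terms is sound (the paper bounds the initial-value term through the Lipschitz dependence of the flow $x \mapsto X^{J,x}$, obtained from the mild It\^{o} formula, rather than through $\sup_{K}\abs{u^K}_{C_\mathrm{b}^1}$, but the resulting constants agree), and Lemma~\ref{lem:finiteness} does supply the a priori integrability you invoke.

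The gap is in the diffusion term, which is exactly where the content of the theorem (rate $\gamma-\beta$ rather than $\nicefrac{(\gamma-\beta)}{2}$) lies. Your plan pairs $\norm{\cJ}_{L(\bfH_0)}$ with the trace norm of $\bfP_K\bfB\bfB^{\star}\bfP_K-\bfP_{I'}\bfB\bfB^{\star}\bfP_{I'}$ and then uses only the symmetric Hilbert--Schmidt factorization $\norm{\bfP_{K\setminus I'}\bfLambda^{-\gamma}}_{L_2(\bfH_0)}^2\leq[\inf_{h\in K\setminus I'}\abs{\lambda_h}]^{\beta-\gamma}\norm{\bfLambda^{-\beta}}_{L_2(\bfH_0)}^2$. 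As you yourself observe, this handles the pure term $\bfP_{K\setminus I'}\bfB\bfB^{\star}\bfP_{K\setminus I'}$ but recovers only half the rate on the mixed terms $\bfP_{I'}\bfB\bfB^{\star}\bfP_{K\setminus I'}$; and your proposed remedy, organizing the estimate so that ``every occurrence of the difference effectively carries two factors of $\bfP_{K\setminus I'}$'', cannot be implemented, since a mixed term contains exactly one such projection and no algebraic reorganization makes it appear quadratically. What actually closes this gap in the paper is an \emph{unbalanced} H\"{o}lder inequality for Schatten norms with the conjugate exponents $\nicefrac{2\beta}{\gamma}$ and $\nicefrac{2\beta}{(2\beta-\gamma)}$ (conjugate since $\tfrac{\gamma}{2\beta}+\tfrac{2\beta-\gamma}{2\beta}=1$): the factor carrying $\bfP_{K\setminus I'}$ is measured in the larger Schatten class $L_{2\beta/(2\beta-\gamma)}$, so that the full operator-norm factor $\norm{\bfP_{K\setminus I'}\bfLambda^{2(\beta-\gamma)}}_{L(\bfH_0)}=[\inf_{h\in K\setminus I'}\abs{\lambda_h}]^{\beta-\gamma}$ can be pulled out while the leftover weight still satisfies $\norm{\bfLambda^{\gamma-2\beta}}_{L_{2\beta/(2\beta-\gamma)}(\bfH_0)}=\norm{\bfLambda^{-\beta}}_{L_2(\bfH_0)}^{\nicefrac{(2\beta-\gamma)}{\beta}}<\infty$ --- this is precisely where $\beta>\nicefrac{\gamma}{2}$ enters --- and the other factor is measured in the smaller class $L_{2\beta/\gamma}$, where $\norm{\bfLambda^{-\gamma}}_{L_{2\beta/\gamma}(\bfH_0)}=\norm{\bfLambda^{-\beta}}_{L_2(\bfH_0)}^{\nicefrac{\gamma}{\beta}}$ requires no decay at all. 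A symmetric $L_2$--$L_2$ split can never yield the full rate, because $\bfLambda^{\gamma-2\beta}$ is in general not Hilbert--Schmidt when $\beta<\gamma$. The paper additionally streamlines the computation by polarization, $v_{0,2}^J(a,a)-v_{0,2}^J(b,b)=v_{0,2}^J(a+b,a-b)$, which removes the mixed terms altogether and leaves a single product of the sum factor $(\bfP_{I'}+\bfP_K)\bfB$ with the difference factor $\bfP_{K\setminus I'}\bfB$, to which the unbalanced estimate is applied once (see \eqref{eq:weakrates,10}); note this goes in the opposite direction of your suggestion --- the difference ends up appearing \emph{linearly}, not quadratically, and it is the asymmetric choice of Schatten exponents, which your sketch names but never deploys, that produces $[\inf_{h\in\bbH\setminus I}\abs{\lambda_h}]^{\beta-\gamma}$ with its full power.
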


\begin{proof}[Proof of Theorem~\ref{thm:weakrates}]
Throughout this proof let $ v^J, v_{ 1, 0 }^J \colon [0,T] \times \bfP_J( \bfH_0 ) \to \R $, $ J \in \cP_0( \bbH ) $, and $ v_{ 0 , \ell }^J  \colon [ 0, T ] \times \bfP_J( \bfH_0 ) \to L^{ (\ell) }( \bfP_J( \bfH_0 ), \R ) $, $ \ell \in \{ 1, 2 \} $, $ J \in \cP_0( \bbH ) $, be the mappings such that for all $ J \in \cP_0( \bbH ) $, $ ( k, \ell) \in \{ (1,0), (0,1), (0,2) \} $, $ (t,x) \in [0,T] \times \bfP_J( \bfH_0 ) $ it holds that $ v^J(t,x) = \E \bigl[ \phi \bigl(X_{T-t}^{ J, x } \bigr) \bigr] $ and that $  v_{ k , \ell }^J( t, x ) = \bigl( \tfrac{ \partial^{k + \ell} }{ \partial t^k \partial x^\ell } v^J \bigr) (t , x ) $. Note that for all $ J \in \cP_0( \bbH ) $, $ (t,x) \in [0,T] \times \bfP_J( \bfH_0 ) $ it holds that $ v^J (t,x) = u^J ( T - t, x ) $. Next observe for all $ J \in \cP_0( \bbH ) $, $ x \in \bfP_J( \bfH_0 ) $ that 
\begin{equation} \label{eq:weakrates,1}
\phi(x) = \E [ \phi( x ) ] = u^J ( 0 , x ) = v^J ( T , x ).
\end{equation} 
Moreover, note for all $ J \in \cP_0( \bbH ) $ that
\begin{equation} \label{eq:weakrates,2}
\E \bigl[ \phi \bigl( X_T^J \bigr) \bigr] = \E \bigl[ u^J \bigl( T , X_0^J \bigr) \bigr] = \E \bigl[ v^J \bigl( 0 , X_0^J \bigr) \bigr].
\end{equation}
Combining \eqref{eq:weakrates,1} and \eqref{eq:weakrates,2} shows for all $ J \in \cP_0( \bbH ) $, $ I \in \cP( J ) $ that
\begin{equation} \label{eq:weakrates,3}
\begin{split}
& \abs[\big]{ \E \bigl[ \phi \bigl( X_{ T }^J \bigr) \bigr] - \E \bigl[ \phi \bigl( X_{ T }^{ I } \bigr) \bigr] } = \abs[\big]{ \E \bigl[ \phi \bigl( X_{ T }^{ I } \bigr) \bigr] - \E \bigl[ \phi \bigl( X_{ T }^J \bigr) \bigr] } \\
&= \abs[\big]{ \E \bigl[ v^J \bigl( T , X_{ T }^{ I } \bigr) \bigr] - \E \bigl[ v^J \bigl( 0 , X_{ 0 }^J \bigr) \bigr] } \\
& \leq \abs[\big]{ \E \bigl[ v^J \bigl( T , X_{ T }^{ I } \bigr) \bigr] - \E \bigl[ v^J \bigl( 0 , X^I_0 \bigr) \bigr] } + \abs[\big]{ \E \bigl[ v^J \bigl( 0 , X_0^I\bigr) \bigr] - \E \bigl[ v^J \bigl( 0 , X_0^J \bigr) \bigr] }.
\end{split}
\end{equation}
In a first step we establish an estimate for the second summand on the right hand side of \eqref{eq:weakrates,3}. For this observe that Corollary~1 in Da~Prato et al.~\cite{DaPratoJentzenRoeckner2010}, the Cauchy-Schwarz inequality, and Lemma~\ref{lem:length_preservation} ensure for all $ J \in \cP_0( \bbH ) $, $ x, y \in \bfP_J( \bfH_0 ) $, $ t \in [0,T] $ that
\begin{equation} \label{eq:weakrates,18}
\begin{split}
& \E \bigl[ \norm[\big]{ X_t^{ J, x } - X_t^{ J, y } }_{ \bfH_0 }^2 \bigr] = \E \bigl[ \norm[\big]{ \e^{ \bfA t } \bigl( X_0^{ J, x } - X_0^{ J, y } \bigr) }_{ \bfH_0 }^2 \bigr] \\
&\quad + 2 \int_{0}^{t} \E \bigl[ \bigl\langle \e^{ \bfA(t-s) } \bigl( X_s^{ J, x } - X_s^{ J, y }  \bigr), \e^{ \bfA(t-s) } \bigl( \bfP_J \bfF( X_s^{ J, x } ) - \bfP_J \bfF( X_s^{ J, y } ) \bigr) \bigr\rangle_{ \bfH_0 } \bigr] \ds \\
&\quad + \int_{0}^{t} \E \bigl[ \norm[\big]{ \e^{ \bfA(t-s) } \bigl( \bfP_J \bfB( X_s^{ J, x } ) - \bfP_J \bfB( X_s^{ J, y } ) \bigr) }_{ L_2( U, \bfH_0 ) }^2 \bigr] \ds \\
& \leq \norm{x - y }_{ \bfH_0 }^2 + \bigl[ 2\abs{ \bfP_J \bfF }_{ \mathrm{Lip}^0( \bfH_0, \bfH_0 ) } + \abs{ \bfP_J \bfB }_{ \mathrm{Lip}^0( \bfH_0, L_2( U, \bfH_0 ) ) }^2 \bigr] \int_{0}^{t} \E \bigl[ \norm[\big]{ X_s^{ J, x } - X_s^{ J, y } }_{ \bfH_0 }^2 \bigr] \ds.
\end{split}
\end{equation}
Gronwall's lemma and Lemma~\ref{lem:perturbation} hence show for all $ J \in \cP_0( \bbH ) $, $ x, y \in \bfP_J( \bfH_0 ) $ that
\begin{equation}
\sup_{ t \in [ 0 , T ] } \norm[\big]{ X_t^{ J, x } - X_t^{ J, y}}_{ L^2 ( \P ; \bfH_0 ) } \leq \norm{ x - y }_{ \bfH_0 } \exp \bigl( T \bigl[ \abs{ \bfP_J \bfF }_{ \mathrm{Lip}^0( \bfH_0, \bfH_0 ) } + \tfrac{1}{2} \abs{ \bfP_J \bfB }_{ \mathrm{Lip}^0( \bfH_0, L_2( U, \bfH_0 ) ) }^2 \bigr] \bigr).
\end{equation}
This implies for all $ J \in \cP_0( \bbH ) $, $ x, y \in \bfP_J( \bfH_0 ) $ that
\begin{equation} \label{eq:weakrates,6}
\begin{split}
& \abs[\big]{ v^J ( 0 , x ) - v^J ( 0 , y ) }  = \abs[\big]{ \E \bigl[ \phi \bigl( X_T^{ J, x } \bigr) \bigr] - \E \bigl[ \phi \bigl( X_T^{ J, y } \bigr) \bigr] } \\
& \leq \abs{ \phi }_{ \mathrm{Lip}^{0} ( \bfH_0, \R ) } \norm[\big]{ X_T^{ J, x} - X_T^{ J, y} }_{ L^1 ( \P ; \bfH_0 ) } \\
& \leq \abs{ \phi }_{ \mathrm{Lip}^{0} ( \bfH_0, \R ) } \norm{ x - y }_{ \bfH_0 } \exp \bigl( T \bigl[ \abs{ \bfP_J \bfF }_{ \mathrm{Lip}^0( \bfH_0, \bfH_0 ) } + \tfrac{1}{2} \abs{ \bfP_J \bfB }_{ \mathrm{Lip}^0( \bfH_0, L_2( U, \bfH_0 ) ) }^2 \bigr] \bigr).
\end{split}
\end{equation}
Furthermore, it holds for all $x \in \bfH_{ 2(\gamma - \beta) } $, $ I, J \in \cP( \bbH ) $ with $ I \neq J $ that
\begin{equation} \label{eq:weakrates,7}
\begin{split}
\norm{ \bfP_I (x)  - \bfP_J (x) }_{ \bfH_0 } & \leq \norm{ \bfLambda^{ 2(\beta - \gamma) } \bfP_{ I \triangle J }  }_{ L(\bfH_0) } \norm{ \bfP_{ I \triangle J } (x) }_{ \bfH_{ 2(\gamma - \beta ) } } \\
& = \biggl[ \inf_{ h \in I \triangle J } \abs{ \lambda_h } \biggr]^{ \beta - \gamma } \norm{ \bfP_{ I \triangle J } (x) }_{ \bfH_{ 2(\gamma - \beta ) } } \leq \biggl[ \inf_{ h \in I \triangle J } \abs{ \lambda_h } \biggr]^{ \beta - \gamma } \norm{ x }_{ \bfH_{ 2(\gamma - \beta ) } }
\end{split}
\end{equation}
(cf., e.g., Proposition~8.1.4 in \cite{Jentzen2015}). Putting \eqref{eq:weakrates,6} and \eqref{eq:weakrates,7} together proves for all $ J \in \cP_0( \bbH ) $, $ I \in \cP( J ) \setminus \{ \bbH \} $ that
\begin{equation} \label{eq:weakrates,16}
\begin{split}
& \abs[\big]{ \E \bigl[ v^J \bigl( 0 , X_0^I \bigr) \bigr] - \E \bigl[ v^J \bigl( 0 , X_0^J \bigr) \bigr] } \leq \abs{ \phi }_{ \mathrm{Lip}^{0} ( \bfH_0, \R ) } \norm[\big]{ \bfP_I \bigl( X_0^J \bigr) - \bfP_J \bigl( X_0^J \bigr) }_{ L^1(\P ; \bfH_0 ) } \\
& \quad \cdot \exp \bigl( T \bigl[ \abs{ \bfP_J \bfF }_{ \mathrm{Lip}^0( \bfH_0, \bfH_0 ) } + \tfrac{1}{2} \abs{ \bfP_J \bfB }_{ \mathrm{Lip}^0( \bfH_0, L_2( U, \bfH_0 ) ) }^2 \bigr] \bigr) \\
& \leq \abs{ \phi }_{ \mathrm{Lip}^{0} ( \bfH_0, \R ) } \norm{ \xi }_{ L^1(\P ; \bfH_{ 2(\gamma - \beta ) } ) } \exp \bigl( T \bigl[ \abs{ \bfF }_{ \mathrm{Lip}^0( \bfH_0, \bfH_0 ) } + \tfrac{1}{2} \abs{ \bfB }_{ \mathrm{Lip}^0( \bfH_0, L_2( U, \bfH_0 ) ) }^2 \bigr] \bigr) \\
& \quad \cdot \biggl[ \inf_{ h \in \bbH \setminus I } \abs{ \lambda_h } \biggr]^{ \beta - \gamma }  < \infty.
\end{split}
\end{equation} 
Inequality \eqref{eq:weakrates,16} provides an estimate for the second summand on the right hand side of \eqref{eq:weakrates,3}. In a second step we establish an estimate for the fist summand on the right hand side of \eqref{eq:weakrates,3}. The chain rule and Lemma~\ref{lem:kolmogorov} show that for all $ J \in \cP_0( \bbH ) $, $ ( t , x ) \in [ 0 , T ] \times \bfP_J( \bfH_0 ) $ it holds that
\begin{equation} \label{eq:weakrates,8}
v_{ 1, 0 }^J ( t , x ) = - v_{ 0, 1 }^J ( t , x ) \bigl[ \bfA x + \bfP_J \bfF( x ) \bigr] - \tfrac{1}{2} \sum_{ u \in \mathbb{U} } v_{ 0, 2 }^J ( t , x ) ( \bfP_J \bfB(x) u , \bfP_J \bfB(x) u ).
\end{equation}
The standard It\^{o} formula and \eqref{eq:weakrates,8} prove for all $ J \in \cP_0( \bbH ) $, $ I \in \cP(J) $ that
\begin{equation} \label{eq:weakrates,19}
\begin{split}
& \E \bigl[ v^J \bigl( T, X_T^I \bigr) \bigr] - \E \bigl[ v^J \bigl( 0, X^I_0 \bigr) \bigr] = \int_{0}^T \E \bigl[ v_{ 1, 0 }^J \bigl( s, X_s^I \bigr) \bigr] \ds + \int_{0}^T \E \bigl[  v_{ 0, 1 }^J \bigl( s, X_s^I \bigr) \bfA X_s^I \bigr] \ds \\
& \quad + \int_{0}^T \E \bigl[ v_{ 0, 1 }^J \bigl( s, X_s^I \bigr) \bfP_I \bfF( X_s^I ) \bigr] \ds + \tfrac{1}{2} \sum_{ u \in \bbU } \int_0^T \E \bigl[ v_{ 0, 2 }^J \bigl( s, X_s^I \bigr) \bigl( \bfP_I \bfB( X^I_s )u , \bfP_I \bfB( X^I_s )u \bigr) \bigr] \ds \\
& =  \int_{0}^T \E \bigl[ v_{ 0, 1 }^J \bigl( s, X_s^I \bigr) \bfP_I \bfF( X_s^I ) \bigr] \ds - \int_{0}^T \E \bigl[ v_{ 0, 1 }^J \bigl( s, X_s^I \bigr) \bfP_J \bfF( X_s^I ) \bigr] \ds \\
& \quad + \frac{1}{2} \sum_{ u \in \bbU } \int_0^T \Bigl( \E \bigl[ v_{ 0, 2 }^J \bigl( s, X_s^I \bigr) \bigl( \bfP_I \bfB( X^I_s )u , \bfP_I \bfB( X^I_s )u \bigr) \bigr] \\
& \quad\qquad\qquad\quad\ \ - \E \bigl[ v_{ 0, 2 }^J \bigl( s, X_s^I \bigr) \bigl( \bfP_J \bfB( X^I_s )u , \bfP_J \bfB( X^I_s )u \bigr) \bigr] \Bigr) \ds.
\end{split}
\end{equation}
This shows for all $ J \in \cP_0( \bbH ) $, $ I \in \cP(J) $ that
\begin{equation} \label{eq:weakrates,9}
\begin{split}
& \abs[\big]{ \E \bigl[ v^J \bigl( T, X_T^I \bigr) \bigr] - \E \bigl[ v^J \bigl( 0, X^I_0 \bigr) \bigr] } \leq \int_{0}^T \abs[\big]{ \E \bigl[ v_{ 0 , 1 }^J \bigl( s, X_s^I \bigr) \bigl( \bfP_{I} \bfF( X_s^I ) - \bfP_J \bfF( X_s^I ) \bigr) \bigr] } \ds \\
& \quad + \abs[\bigg]{ \frac{1}{2} \sum_{ u \in \bbU } \int_0^T \E \bigl[ v_{ 0 , 2 }^J \bigl( s, X_s^I \bigr) \bigl( \bfP_I \bfB( X^I_s )u + \bfP_J \bfB( X^I_s )u, \bfP_I \bfB( X^I_s )u - \bfP_J \bfB( X^I_s )u \bigr) \bigr] \ds }.
\end{split}
\end{equation}
Inequality \eqref{eq:weakrates,7}, Lemma~\ref{lem:finiteness_kolmogorov}, and Lemma~\ref{lem:finiteness} thus prove for all $ J \in \cP_0( \bbH ) $, $ I \in \cP(J) \setminus \{ J \} $ that
\begin{equation} \label{eq:weakrates,14}
\begin{split}
& \int_0^T \abs[\big]{ \E \bigl[ v_{ 0 , 1 }^J \bigl( s, X_s^I \bigr) \bigl( \bfP_I \bfF(X_s^I) - \bfP_J \bfF(X_s^I) \bigr) \bigr] } \ds \\
& \leq \int_0^T  \E \bigl[ \abs[\big]{ v_{ 0 , 1 }^J \bigl( s, X_s^I \bigr) \bigl( \bfP_I \bfF(X_s^I) - \bfP_J \bfF(X_s^I) \bigr) } \bigr] \ds \\
& \leq \sup_{ t \in [ 0 , T ] } \abs[\big]{ u^J (t,\cdot) }_{ C_{\mathrm{b}}^1 ( \bfP_J (\bfH_0) , \R ) } \int_{0}^T \E \bigl[ \norm[\big]{ \bfP_I \bfF(X_s^I) - \bfP_J \bfF(X_s^I) }_{\bfH_0} \bigr] \ds \\
& \leq \biggl[ \sup_{K \in \cP_0( \bbH )} \sup_{ t \in [ 0 , T ] } \abs[\big]{ u^K (t, \cdot ) }_{ C_{\mathrm{b}}^1 ( \bfP_K( \bfH_0 ), \R ) } \biggr] \sup_{ K \in \cP_0( \bbH ) } \int_{0}^T \E \bigl[ \norm[\big]{ \bfF(X_s^K) }_{ \bfH_{ 2(\gamma-\beta) } } \bigr] \ds \\
& \quad \cdot \biggl[ \inf_{ h \in J \setminus I } \abs{ \lambda_h } \biggr]^{ \beta - \gamma } < \infty.
\end{split}
\end{equation}
This estimates the first summand on the right hand side of \eqref{eq:weakrates,9}. Next we consider the second summand on the right hand side of \eqref{eq:weakrates,9}. Observe for all $ J \in \cP_0( \bbH ) $, $ s \in [0,T] $, $ I \in \cP(J) $, $ \omega \in \Omega $ that $ v_{ 0 , 2 }^J \bigl( s, X_s^I (\omega) \bigr) \in L^{ (2) }( \bfP_J( \bfH_0), \R ) $. We define random variables $ R_{ I, J, s } \colon \Omega \to L( \bfP_J( \bfH_0 ) ) $, $ I \in \cP( J ) $, $ J \in \cP_0( \bbH ) $,  $ s \in [0,T] $, such that for all $ s \in [0,T] $, $ J \in \cP_0( \bbH ) $, $ I \in \cP( J )$, $ \omega \in \Omega $ it holds that
\begin{equation}
R_{ I, J, s } (\omega) = \cJ_{ v_{ 0 , 2 }^J( s, X_s^I(\omega) ) }^{ \bfP_J( \bfH_0 ) }.
\end{equation}
Then note that for all $ s \in [0,T] $, $ J \in \cP_0( \bbH ) $, $ y_1, y_2 \in \bfP_J( \bfH_0 ) $, $ I \in \cP( J ) $ it holds that $ v_{ 0 , 2 }^J \bigl(s, X_s^I \bigr) ( y_1, $ $  y_2 ) = \langle y_1 , R_{ I, J, s } y_2 \rangle_{ \bfH_0 } $. Therefore, the H\"{o}lder inequality for Schatten norms implies for all $ s \in [0,T] $, $ J \in \cP_0( \bbH ) $, $ I \in \cP( J ) $ that
\begin{equation} \label{eq:weakrates,10}
\begin{split}
& \abs[\bigg]{ \sum_{ u \in \bbU } \E \bigl[ v_{ 0 , 2 }^J \bigl( s, X_s^I \bigr) \bigl( ( \bfP_I + \bfP_J ) \bfB( X^I_s )u , ( \bfP_I - \bfP_J ) \bfB( X^I_s )u \bigr) \bigr] } \\
& = \abs[\bigg]{ \E \biggl[ \sum_{ u \in \bbU } \langle ( \bfP_I + \bfP_J ) \bfB( X^I_s )u , R_{ I, J, s } ( \bfP_I - \bfP_J ) \bfB( X^I_s )u \rangle_{ \bfH_0} \biggr] } \\
& = \abs[\big]{ \E \bigl[ \tr_{ U } ( \bfB( X^I_s )^{\star} ( \bfP_I + \bfP_J )^{ \star } R_{ I, J, s } ( \bfP_I - \bfP_J ) \bfB( X^I_s ) ) \bigr] } \\
& \leq \E \bigl[ \norm[\big]{ \bfB( X^I_s )^{\star} (\bfP_I + \bfP_J )^{ \star } R_{ I, J, s } ( \bfP_I - \bfP_J ) \bfB( X^I_s ) }_{ L_1 ( U ) } \bigr] \\
& \leq \E \Bigl[ \norm[\big]{ \bfB( X^I_s )^{\star} (\bfP_I + \bfP_J )^{\star} }_{ L_{ \nicefrac{ (2\beta) }{ \gamma } }(\bfH_0,U) } \norm{ R_{ I, J, s } }_{ L( \bfP_J(\bfH_0) ) } \norm[\big]{ (\bfP_I - \bfP_J ) \bfB( X^I_s ) }_{ L_{ \nicefrac{ (2\beta) }{ (2\beta-\gamma) } } (U,\bfH_0) } \Bigr].
\end{split} \raisetag{2.7cm}
\end{equation}
Moreover, observe for all $ s \in[0,T] $, $ J \in \cP_0( \bbH ) $, $ I \in \cP( J ) \setminus \{ J \} $ that
\begin{equation}
\begin{split}
& \norm[\big]{ \bfB( X^I_s )^{\star} (\bfP_I + \bfP_J )^{\star} }_{ L_{ \nicefrac{ (2\beta) }{ \gamma } }(\bfH_0,U) } = \norm[\big]{ \bfB( X^I_s )^{\star} \bfLambda^{ \gamma } \bfLambda^{ - \gamma } (\bfP_I + \bfP_J )^{\star} }_{ L_{ \nicefrac{ (2\beta) }{ \gamma } }(\bfH_0,U) } \\
& \leq \norm[\big]{ \bfB( X^I_s )^\star \bfLambda^{ \gamma } }_{ L( \bfH_0, U ) } \norm{ \bfLambda^{ - \gamma } }_{L_{ \nicefrac{ (2\beta) }{ \gamma } }( \bfH_0 )} \norm{ (\bfP_I + \bfP_J )^{\star} }_{ L(\bfH_0) } \\
& = \norm[\big]{ \bfB( X^I_s ) }_{ L(U, \bfH_\gamma) } \norm{ \bfLambda^{ -\beta } }_{ L_2(\bfH_0) }^{ \nicefrac{ \gamma }{ \beta } } \norm{ \bfP_I + \bfP_J }_{ L(\bfH_0) } \leq 2 \tp \norm[\big]{ \bfB( X^I_s ) }_{ L(U, \bfH_\gamma) } \norm{ \bfLambda^{ -\beta } }_{ L_2(\bfH_0) }^{ \nicefrac{ \gamma }{ \beta } } < \infty
\end{split}
\end{equation}
and
\begin{equation}
\begin{split}
&\norm[\big]{ (\bfP_I - \bfP_J ) \bfB( X^I_s ) }_{L_{ \nicefrac{ (2\beta) }{ (2\beta - \gamma) } } (U,\bfH_0)} \leq \norm{ \bfP_I - \bfP_J }_{L_{ \nicefrac{ (2\beta) }{ (2\beta - \gamma) } } ( \bfH_{\gamma},\bfH_0)} \norm[\big]{ \bfB( X^I_s ) }_{L(U,\bfH_{\gamma})} \\
& \leq \norm{(\bfP_I - \bfP_J ) \bfLambda^{ 2(\beta - \gamma) }}_{ L(\bfH_0) } \norm{ \bfLambda^{ 2(\gamma-\beta) } }_{L_{ \nicefrac{ (2\beta) }{ (2\beta - \gamma) } } ( \bfH_{\gamma},\bfH_0)} \norm[\big]{ \bfB( X^I_s ) }_{L(U, \bfH_{\gamma})} \\
& =  \biggl[ \inf_{ h \in J \setminus I } \abs{ \lambda_h } \biggr]^{ \beta - \gamma } \norm{ \bfLambda^{-\beta} }_{L_2 (\bfH_0)}^{ \nicefrac{ (2\beta - \gamma) }{\beta} } \norm[\big]{ \bfB( X^I_s ) }_{L(U, \bfH_{\gamma})}.
\end{split}
\end{equation}
In addition, Lemma~\ref{lem:finiteness_kolmogorov} implies for all $ s \in[0,T] $, $ J \in \cP_0( \bbH ) $, $ I \in \cP( J ) $ that
\begin{equation}  \label{eq:weakrates,11}
\norm{ R_{ I, J, s } }_{ L ( \bfP_J( \bfH_0 ) ) } \leq \sup_{ t \in [ 0 , T ] } \abs[\big]{ u^J( t, \cdot ) }_{ C_{\mathrm{b}}^2 ( \bfP_J( \bfH_0 ) , \R ) } < \infty.
\end{equation}
Inequalities \eqref{eq:weakrates,10}--\eqref{eq:weakrates,11}, Lemma~\ref{lem:finiteness}, and Lemma~\ref{lem:finiteness_kolmogorov} prove for all $ J \in \cP_0( \bbH ) $, $ I \in \cP( J ) \setminus \{ J \} $ that
\begin{equation}
\begin{split}
& \abs[\bigg]{ \frac{1}{2} \sum_{ u \in \bbU } \int_0^T \E \bigl[ v_{ 0 , 2 }^J \bigl( s,X_s^I \bigr) \bigl( \bfP_I \bfB( X^I_s )u + \bfP_J \bfB( X^I_s )u, \bfP_I \bfB( X^I_s )u - \bfP_J \bfB( X^I_s )u \bigr) \bigr] \ds } \\
& \leq \norm{ \bfLambda^{ -\beta } }_{ L_2(\bfH_0) }^2 \biggl[ \sup_{ K \in \cP_0( \bbH ) } \sup_{ t \in [ 0 , T ] } \abs[\big]{ u^K( t, \cdot ) }_{ C_{\mathrm{b}}^2 ( \bfP_K( \bfH_0 ), \R ) } \biggr] \sup_{ K \in \cP_0( \bbH ) } \int_{0}^{T} \E \bigl[ \norm[\big]{ \bfB( X_s^K ) }_{L(U, \bfH_{\gamma})}^2 \bigr] \ds \\
& \quad  \cdot \biggl[ \inf_{ h \in J \setminus I } \abs{ \lambda_h } \biggr]^{ \beta - \gamma } < \infty. \\
\end{split}
\end{equation}
Combining this with \eqref{eq:weakrates,9} and \eqref{eq:weakrates,14} ensures for all $ J \in \cP_0( \bbH ) $, $ I \in \cP( J ) \setminus \{ \bbH \} $ that
\begin{equation} \label{eq:weakrates,15}
\begin{split}
& \abs[\big]{ \E \bigl[ v^J \bigl( T, X_T^I \bigr) \bigr] - \E \bigl[ v^J \bigl( 0, X^I_0 \bigr) \bigr] } \\
& \leq \biggl( \biggl[ \sup_{K \in \cP_0( \bbH )} \sup_{ t \in [ 0 , T ] } \abs[\big]{ u^K (t, \cdot ) }_{ C_{\mathrm{b}}^1 ( \bfP_K( \bfH_0 ), \R ) } \biggr] \sup_{ K \in \cP_0( \bbH ) } \int_{0}^T \E \bigl[ \norm[\big]{ \bfF(X_s^K) }_{ \bfH_{ 2(\gamma-\beta) } } \bigr] \ds \\
&\quad + \norm{ \bfLambda^{ -\beta } }_{ L_2(\bfH_0) }^2 \biggl[ \sup_{ K \in \cP_0( \bbH ) } \sup_{ t \in [ 0 , T ] } \abs[\big]{ u^K( t, \cdot ) }_{ C_{\mathrm{b}}^2 ( \bfP_K( \bfH_0 ), \R ) } \biggr] \sup_{ K \in \cP_0( \bbH ) } \int_{0}^{T} \E \bigl[ \norm[\big]{ \bfB( X_s^K ) }_{L(U, \bfH_{\gamma})}^2 \bigr] \ds \biggr) \\
& \quad \cdot \biggl[ \inf_{ h \in \bbH \setminus I } \abs{ \lambda_h } \biggr]^{ \beta - \gamma } < \infty.
\end{split}
\end{equation}
This constitutes an estimate for the first summand on the right hand side of \eqref{eq:weakrates,3}. Inequalities \eqref{eq:weakrates,15}, \eqref{eq:weakrates,3}, and \eqref{eq:weakrates,16} show for all $ J \in \cP_0( \bbH ) $, $ I \in \cP( J ) \setminus \{ \bbH \} $ that
\begin{equation} \label{eq:weakrates,5}
\begin{split}
& \abs[\big]{ \E \bigl[ \phi \bigl(X_T^J \bigr)\bigr] - \E \bigl[\phi \bigl(X^I_T \bigr) \bigr] } \\
& \leq \biggl(  \abs{ \phi }_{ \mathrm{Lip}^{0} ( \bfH_0, \R ) } \norm{ \xi }_{ L^1(\P ; \bfH_{ 2(\gamma - \beta ) } ) } \exp \bigl( T \bigl[ \abs{ \bfF }_{ \mathrm{Lip}^0( \bfH_0, \bfH_0 ) } + \tfrac{1}{2} \abs{ \bfB }_{ \mathrm{Lip}^0( \bfH_0, L_2( U, \bfH_0 ) ) }^2 \bigr] \bigr) \\
&\quad + \biggl[ \sup_{K \in \cP_0( \bbH )} \sup_{ t \in [ 0 , T ] } \abs[\big]{ u^K (t, \cdot ) }_{ C_{\mathrm{b}}^1 ( \bfP_K( \bfH_0 ), \R ) } \biggr] \sup_{ K \in \cP_0( \bbH ) } \int_{0}^T \E \bigl[ \norm[\big]{ \bfF(X_s^K) }_{ \bfH_{ 2(\gamma-\beta) } } \bigr] \ds \\
&\quad + \norm{ \bfLambda^{ -\beta } }_{ L_2(\bfH_0) }^2 \biggl[ \sup_{ K \in \cP_0( \bbH ) } \sup_{ t \in [ 0 , T ] } \abs[\big]{ u^K( t, \cdot ) }_{ C_{\mathrm{b}}^2 ( \bfP_K( \bfH_0 ), \R ) } \biggr] \sup_{ K \in \cP_0( \bbH ) } \int_{0}^{T} \E \bigl[ \norm[\big]{ \bfB( X_s^K ) }_{L(U, \bfH_{\gamma})}^2 \bigr] \ds \biggr) \\
& \quad \cdot \biggl[ \inf_{ h \in \bbH \setminus I } \abs{ \lambda_h } \biggr]^{ \beta - \gamma } < \infty.
\end{split}
\end{equation}
In a third step Lemma~\ref{lem:perturbation}, Lemma~\ref{lem:length_preservation}, Minkowski's integral inequality, the Burkholder-Davis-Gundy inequality (see, e.g., Lemma~7.7 in Da~Prato \& Zabczyk~\cite{DaPratoZabczyk1992}), and the Cauchy-Schwarz inequality imply for all  $ J_0 \in \cP( \bbH ) $ and $ J_n \in \cP_0( \bbH ) $, $ n \in \N$, which satisfy $ \bigcup_{ n \in \N } J_n = J_0 $ and $ \forall \tp n \in \N \colon J_n \subseteq J_{n+1} $ and all $ n \in \N $ that
\begin{equation} \label{eq:weakrates,13}
\begin{split}
&\sup_{t \in [0,T]} \norm[\big]{ X_t^{J_0} - X_t^{J_n} }_{L^2( \P; \bfH_0 ) } \\
& \leq \sqrt{2}\tp \cE_1\bigl[ \sqrt{2}\tp T \abs{ \bfP_{ J_n } \bfF }_{ \mathrm{Lip}^0( \bfH_0, \bfH_0 ) } + \sqrt{2T} \abs{ \bfP_{ J_n} \bfB }_{ \mathrm{Lip}^0( \bfH_0, L_2( U, \bfH_0 ) ) } \bigr] \sup_{ t \in [0,T] } \norm[\big]{ \bfP_{ J_0 \setminus J_n } X_t^{J_0} }_{ L^2( \P; \bfH_0 ) } \\
& \leq \sqrt{2}\tp \cE_1\bigl[ \sqrt{2}\tp T \abs{ \bfF }_{ \mathrm{Lip}^0( \bfH_0, \bfH_0 ) } + \sqrt{2T} \abs{ \bfB }_{ \mathrm{Lip}^0( \bfH_0, L_2( U, \bfH_0 ) ) } \bigr] \\
& \quad \cdot \biggl( \norm{ \bfP_{ J_0 \setminus J_n } \xi }_{ L^2( \P; \bfH_0 ) } + \biggl[ T \int_{0}^{T} \norm[\big]{ \bfP_{ J_0 \setminus J_n } \bfF( X_s^{J_0} ) }_{ L^2( \P; \bfH_0 ) }^2 \ds \biggr]^{ \nicefrac{1}{2} } \\
&\qquad\ + \biggl[ \int_{0}^{T} \norm[\big]{ \bfP_{ J_0 \setminus J_n } \bfB( X_s^{J_0} ) }_{ L^2( \P; L_2( U, \bfH_0 ) ) }^2 \ds \biggr]^{ \nicefrac{1}{2} } \biggr).
\end{split}
\end{equation}
Therefore, Lebesgue's theorem of dominated convergence proves for all $ J_0 \in \cP( \bbH ) $ and $ J_n \in \cP_0( \bbH ) $, $ n \in \N$, which satisfy $ \bigcup_{ n \in \N } J_n = J_0 $ and $ \forall \tp n \in \N \colon J_n \subseteq J_{n+1} $ that 
\begin{equation} \label{eq:weakrates,17}
\lim_{ n \to \infty } \sup_{t \in [0,T]} \norm[\big]{ X_t^{J_0} - X_t^{J_n} }_{L^2( \P; \bfH_0 ) } = 0.
\end{equation} 
In a next step, \eqref{eq:weakrates,5} shows for all $ I \in \cP_0( \bbH ) \setminus \{ \bbH \} $ and $ J_n \in \cP_0( \bbH ) $, $ n \in \N$, which satisfy $ \bigcup_{n \in \N } J_n = \bbH $, $ I \subseteq J_1 $, and $ \forall \tp n \in \N \colon J_n \subseteq J_{n+1} $, and all $ n \in \N $ that
\begin{equation} \label{eq:weakrates,12}
\begin{split}
& \abs[\big]{ \E \bigl[ \phi \bigl( X_T^\bbH \bigr) \bigr] - \E \bigl[\phi \bigl(X^I_T \bigr) \bigr] } \leq \abs[\big]{ \E \bigl[ \phi \bigl(X_T^\bbH \bigl) \bigr] - \E \bigl[ \phi \bigl(X^{J_n}_T \bigr) \bigr] } + \abs[\big]{ \E \bigl[ \phi \bigl( X_T^{J_n} \bigr) \bigr] - \E \bigl[ \phi \bigl( X^{I}_T \bigr) \bigr] } \\
& \leq \abs{\phi}_{ \mathrm{Lip}^0 ( \bfH_0 , \bfH_0 ) } \norm[\big]{ X_T^\bbH - X_T^{J_n} }_{L^2( \P; \bfH_0 ) } \\
&\quad + \biggl(  \abs{ \phi }_{ \mathrm{Lip}^{0} ( \bfH_0, \R ) } \norm{ \xi }_{ L^1(\P ; \bfH_{ 2(\gamma - \beta ) } ) } \exp \bigl( T \bigl[ \abs{ \bfF }_{ \mathrm{Lip}^0( \bfH_0, \bfH_0 ) } + \tfrac{1}{2} \abs{ \bfB }_{ \mathrm{Lip}^0( \bfH_0, L_2( U, \bfH_0 ) ) }^2 \bigr] \bigr) \\
&\quad + \biggl[ \sup_{K \in \cP_0( \bbH )} \sup_{ t \in [ 0 , T ] } \abs[\big]{ u^K (t, \cdot ) }_{ C_{\mathrm{b}}^1 ( \bfP_K( \bfH_0 ), \R ) } \biggr] \sup_{ K \in \cP_0( \bbH ) } \int_{0}^T \E \bigl[ \norm[\big]{ \bfF(X_s^K) }_{ \bfH_{ 2(\gamma-\beta) } } \bigr] \ds \\
&\quad + \norm{ \bfLambda^{ -\beta } }_{ L_2(\bfH_0) }^2 \biggl[ \sup_{ K \in \cP_0( \bbH ) } \sup_{ t \in [ 0 , T ] } \abs[\big]{ u^K( t, \cdot ) }_{ C_{\mathrm{b}}^2 ( \bfP_K( \bfH_0 ), \R ) } \biggr] \sup_{ K \in \cP_0( \bbH ) } \int_{0}^{T} \E \bigl[ \norm[\big]{ \bfB( X_s^K ) }_{L(U, \bfH_{\gamma})}^2 \bigr] \ds \biggr) \\
& \quad \cdot \biggl[ \inf_{ h \in \bbH \setminus I } \abs{ \lambda_h } \biggr]^{ \beta - \gamma }.
\end{split}
\end{equation}
Letting $ n \to \infty $ in \eqref{eq:weakrates,12} and \eqref{eq:weakrates,17} complete the proof of Theorem~\ref{thm:weakrates} in the case that $ I \in \cP_0( \bbH ) \setminus \{ \bbH \} $. In a last step we prove the remaining cases. The estimate \eqref{eq:weakrates,12} ensures for all $ I_0 \in \cP( \bbH ) \setminus \{ \bbH \} $ and $ I_n \in \cP_0( I_0 ) $, $ n \in \N$, which satisfy $ \bigcup_{ n \in \N } I_n = I_0 $ and $ \forall \tp n \in \N \colon I_n \subseteq I_{n+1} $ and all $ n \in \N $ that
\begin{equation}
\begin{split}
& \abs[\big]{ \E \bigl[ \phi \bigl( X_T^\bbH \bigr) \bigr] - \E \bigl[\phi \bigl(X_T^{I_0} \bigr) \bigr] } \leq \abs[\big]{ \E \bigl[ \phi \bigl(X_T^\bbH \bigl) \bigr] - \E \bigl[ \phi \bigl(X^{I_n}_T \bigr) \bigr] } + \abs[\big]{ \E \bigl[ \phi \bigl( X_T^{I_0} \bigr) \bigr] - \E \bigl[ \phi \bigl( X^{I_n}_T \bigr) \bigr] } \\
& \leq \biggl(  \abs{ \phi }_{ \mathrm{Lip}^{0} ( \bfH_0, \R ) } \norm{ \xi }_{ L^1(\P ; \bfH_{ 2(\gamma - \beta ) } ) } \exp \bigl( T \bigl[ \abs{ \bfF }_{ \mathrm{Lip}^0( \bfH_0, \bfH_0 ) } + \tfrac{1}{2} \abs{ \bfB }_{ \mathrm{Lip}^0( \bfH_0, L_2( U, \bfH_0 ) ) }^2 \bigr] \bigr) \\
&\quad + \biggl[ \sup_{K \in \cP_0( \bbH )} \sup_{ t \in [ 0 , T ] } \abs[\big]{ u^K (t, \cdot ) }_{ C_{\mathrm{b}}^1 ( \bfP_K( \bfH_0 ), \R ) } \biggr] \sup_{ K \in \cP_0( \bbH ) } \int_{0}^T \E \bigl[ \norm[\big]{ \bfF(X_s^K) }_{ \bfH_{ 2(\gamma-\beta) } } \bigr] \ds \\
&\quad + \norm{ \bfLambda^{ -\beta } }_{ L_2(\bfH_0) }^2 \biggl[ \sup_{ K \in \cP_0( \bbH ) } \sup_{ t \in [ 0 , T ] } \abs[\big]{ u^K( t, \cdot ) }_{ C_{\mathrm{b}}^2 ( \bfP_K( \bfH_0 ), \R ) } \biggr] \sup_{ K \in \cP_0( \bbH ) } \int_{0}^{T} \E \bigl[ \norm[\big]{ \bfB( X_s^K ) }_{L(U, \bfH_{\gamma})}^2 \bigr] \ds \biggr) \\
& \quad \cdot \biggl[ \inf_{ h \in \bbH \setminus I } \abs{ \lambda_h } \biggr]^{ \beta - \gamma } + \abs{\phi}_{ \mathrm{Lip}^0 ( \bfH_0 , \bfH_0 ) } \norm[\big]{ X_T^{I_0} - X_T^{I_n} }_{L^2( \P; \bfH_0 ) }.
\end{split}
\end{equation}
Equation \eqref{eq:weakrates,17} and Lemma~\ref{lem:limsup} thus complete the proof of Theorem~\ref{thm:weakrates}.
\end{proof}

The next corollary is a direct consequence of Theorem~\ref{thm:weakrates} and Lemma~\ref{lem:finiteness}.

\begin{cor} \label{cor:weakrates}
Assume the setting in Section \ref{subsec:WeakSetting}, let $ X^I \colon [ 0 , T ] \times \Omega \to \bfP_{I} ( \bfH_\rho ) $, $ I \in \cP( \bbH ) $, and $ X^{ J , x } \colon [ 0 , T ] \times \Omega \to \bfP_J( \bfH_0 ) $, $ x \in \bfP_J( \bfH_0 ) $, $ J \in \cP_0( \bbH ) $, be $( \cF_t )_{ t \in [0,T] } $-predictable stochastic processes such that for all $ I \in \cP( \bbH ) $, $ J \in \cP_0( \bbH ) $, $ x \in \bfP_J( \bfH_0 ) $, $ t \in [ 0 , T ] $ it holds that $ \sup_{ s \in [ 0 , T ] } \bigl( \norm{ X_s^I }_{ L^2 ( \P ; \bfH_\rho ) } + \norm{ X_s^{ J , x } }_{ L^2 ( \P ; \bfH_{0} ) } \bigr) < \infty $ and $ \P $-a.s.\ that
\begin{align}
X_t^I & =  \e^{\bfA t} \bfP_{I} \xi + \int_0^t \e^{ \bfA(t-s) } \bfP_I \bfF(X_s^I) \ds  + \int_0^t \e^{ \bfA(t-s) } \bfP_I \bfB(X_s^I) \dWs,  \\
X_t^{ J, x } & = \e^{ \bfA t } x + \int_0^t \e^{ \bfA(t-s) } \bfP_J \bfF( X_s^{ J, x } ) \ds + \int_0^t \e^{ \bfA(t-s) } \bfP_J \bfB( X_s^{ J, x } ) \dWs,
\end{align}
let $ \phi \in C^{2}_\mathrm{b}( \bfH_0 , \R ) $, and let $ u^J \colon [0,T] \times \bfP_J( \bfH_0 ) \to \R $, $ J \in \cP_0( \bbH ) $, be the mappings which satisfy for all $ J \in \cP_0( \bbH ) $, $ (t,x) \in [0,T] \times \bfP_J( \bfH_0 ) $ that $ u^J(t,x) = \E\bigl[ \phi \bigl(X_t^{ J, x }\bigr) \bigr] $. Then it holds for all $ I \in \cP( \bbH ) \setminus \{ \bbH \} $ that
\begin{equation}
\begin{split}
& \abs[\big]{ \E \bigl[ \phi \bigl(X_T^\bbH \bigr)\bigr] - \E \bigl[\phi \bigl(X^I_T \bigr) \bigr] } \\
& \leq \biggl(  \abs{ \phi }_{ \mathrm{Lip}^{0} ( \bfH_0, \R ) } \norm{ \xi }_{ L^1(\P ; \bfH_{ 2(\gamma - \beta ) } ) } \exp \bigl( T \bigl[ \abs{ \bfF }_{ \mathrm{Lip}^0( \bfH_0, \bfH_0 ) } + \tfrac{1}{2} \abs{ \bfB }_{ \mathrm{Lip}^0( \bfH_0, L_2( U, \bfH_0 ) ) }^2 \bigr] \bigr) \\
&\quad + T \biggl[ \max_{ i \in \{ 1, 2 \} } \sup_{K \in \cP_0( \bbH )} \sup_{ t \in [ 0 , T ] } \abs[\big]{ u^K (t, \cdot ) }_{ C_{\mathrm{b}}^i ( \bfP_K( \bfH_0 ), \R ) } \biggr] \sup_{K \in \cP_0( \bbH )} \sup_{ t \in [ 0 , T ] } \bigl( 1 \vee \E \bigl[ \norm[\big]{ X_t^K }_{ \bfH_\rho }^2 \bigr] \bigr) \\
& \quad \cdot \Bigl[ \norm[\big]{ \bfF \vert_{ \bfH_\rho } }_{ \mathrm{Lip}^0( \bfH_\rho , \bfH_{ 2(\gamma - \beta ) } ) } + \norm{ \bfLambda^{ -\beta } }_{ L_2(\bfH_0) }^2 \norm[\big]{ \bfB \vert_{ \bfH_\rho } }_{ \mathrm{Lip}^0( \bfH_\rho , L( U , \bfH_{\gamma} ) ) }^2 \Bigr] \biggr) \biggl[ \inf_{ h \in \mathbb{H} \setminus I } \abs{\lambda_h} \biggr]^{ \beta - \gamma } < \infty.
\end{split}
\end{equation}
\end{cor}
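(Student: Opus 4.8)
The plan is to invoke Theorem~\ref{thm:weakrates} directly and then simplify its right-hand side. Since the stochastic processes $ X^I $, $ X^{J,x} $, the function $ \phi $, and the functions $ u^J $ in the hypotheses here are exactly those of Theorem~\ref{thm:weakrates}, that theorem applies without modification and yields, for every $ I \in \cP( \bbH ) \setminus \{ \bbH \} $, an estimate whose first summand (the $ \xi $-term) already coincides with the first line of the claimed bound. It therefore remains only to control the two integral factors $ \sup_{ K \in \cP_0( \bbH ) } \int_0^T \E \bigl[ \norm{ \bfF( X_s^K ) }_{ \bfH_{ 2( \gamma - \beta ) } } \bigr] \ds $ and $ \sup_{ K \in \cP_0( \bbH ) } \int_0^T \E \bigl[ \norm{ \bfB( X_s^K ) }_{ L( U, \bfH_\gamma ) }^2 \bigr] \ds $, and to merge the factors $ \sup_K \sup_t \abs{ u^K(t,\cdot) }_{ C_\mathrm{b}^1( \bfP_K( \bfH_0 ), \R ) } $ and $ \sup_K \sup_t \abs{ u^K(t,\cdot) }_{ C_\mathrm{b}^2( \bfP_K( \bfH_0 ), \R ) } $ into the single factor $ \max_{ i \in \{ 1, 2 \} } \sup_K \sup_t \abs{ u^K(t,\cdot) }_{ C_\mathrm{b}^i( \bfP_K( \bfH_0 ), \R ) } $, which is immediate.

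For the integral factors I would use the linear-growth property encoded in the $ \mathrm{Lip}^0 $-seminorms, carried out at the level of $ L^2( \P ) $-norms by Minkowski's inequality and exploiting that each $ X_s^K $ is $ \bfP_K( \bfH_\rho ) \subseteq \bfH_\rho $-valued. This gives $ \E \bigl[ \norm{ \bfF( X_s^K ) }_{ \bfH_{ 2( \gamma - \beta ) } } \bigr] \leq \norm{ \bfF( X_s^K ) }_{ L^2( \P; \bfH_{ 2( \gamma - \beta ) } ) } \leq \norm{ \bfF \vert_{ \bfH_\rho } }_{ \mathrm{Lip}^0( \bfH_\rho, \bfH_{ 2( \gamma - \beta ) } ) } \bigl( 1 \vee \E \bigl[ \norm{ X_s^K }_{ \bfH_\rho }^2 \bigr] \bigr) $ and, squaring before taking expectations, $ \E \bigl[ \norm{ \bfB( X_s^K ) }_{ L( U, \bfH_\gamma ) }^2 \bigr] \leq \norm{ \bfB \vert_{ \bfH_\rho } }_{ \mathrm{Lip}^0( \bfH_\rho, L( U, \bfH_\gamma ) ) }^2 \bigl( 1 \vee \E \bigl[ \norm{ X_s^K }_{ \bfH_\rho }^2 \bigr] \bigr) $, where I use $ \norm{ X_s^K }_{ L^2( \P; \bfH_\rho ) }^2 = \E \bigl[ \norm{ X_s^K }_{ \bfH_\rho }^2 \bigr] $ and the identity $ ( 1 \vee a )^2 = 1 \vee a^2 $. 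Integrating over $ [ 0, T ] $ then extracts a factor $ T $ and the supremum $ \sup_{ K } \sup_{ t } \bigl( 1 \vee \E \bigl[ \norm{ X_t^K }_{ \bfH_\rho }^2 \bigr] \bigr) $; inserting the two bounds back into the estimate of Theorem~\ref{thm:weakrates} and factoring out $ T $, the merged $ u^K $-factor, and this supremum produces exactly the bracket $ \bigl[ \norm{ \bfF \vert_{ \bfH_\rho } }_{ \mathrm{Lip}^0( \bfH_\rho, \bfH_{ 2( \gamma - \beta ) } ) } + \norm{ \bfLambda^{ -\beta } }_{ L_2( \bfH_0 ) }^2 \norm{ \bfB \vert_{ \bfH_\rho } }_{ \mathrm{Lip}^0( \bfH_\rho, L( U, \bfH_\gamma ) ) }^2 \bigr] $ of the claim.

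Finiteness of the whole expression then follows at once: Lemma~\ref{lem:finiteness} gives $ \sup_K \sup_t \bigl( 1 \vee \norm{ X_t^K }_{ L^2( \P; \bfH_\rho ) } \bigr) < \infty $, whence $ \sup_K \sup_t \bigl( 1 \vee \E \bigl[ \norm{ X_t^K }_{ \bfH_\rho }^2 \bigr] \bigr) < \infty $, while the $ u^K $-seminorms are finite by Lemma~\ref{lem:finiteness_kolmogorov}. I expect the only genuinely delicate point to be the growth estimate for $ \bfB $: the tempting route of first bounding $ \norm{ \bfB( x ) }_{ L( U, \bfH_\gamma ) }^2 \leq \norm{ \bfB \vert_{ \bfH_\rho } }_{ \mathrm{Lip}^0 }^2 \bigl( 1 \vee \norm{ x }_{ \bfH_\rho }^2 \bigr) $ pointwise and then taking expectations would leave $ \E \bigl[ 1 \vee \norm{ X_s^K }_{ \bfH_\rho }^2 \bigr] $, which by Jensen's inequality can strictly exceed $ 1 \vee \E \bigl[ \norm{ X_s^K }_{ \bfH_\rho }^2 \bigr] $ and hence does not reproduce the clean factor in the statement; performing the estimate at the level of the $ L^2( \P ) $-norm via Minkowski, as above, circumvents this and yields precisely $ 1 \vee \E \bigl[ \norm{ X_s^K }_{ \bfH_\rho }^2 \bigr] $.
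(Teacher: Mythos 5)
Your proposal is correct and follows exactly the route the paper intends: the paper gives no written proof, stating only that the corollary ``is a direct consequence of Theorem~\ref{thm:weakrates} and Lemma~\ref{lem:finiteness}'', and your argument is precisely the natural filling-in of that remark (invoke Theorem~\ref{thm:weakrates}, bound the two integral factors by the linear growth encoded in the $\mathrm{Lip}^0$-norms, extract the factor $T$, and use Lemma~\ref{lem:finiteness} together with Lemma~\ref{lem:finiteness_kolmogorov} for finiteness). Your handling of the growth estimates at the level of $L^2(\P)$-norms via Minkowski, so that the clean factor $1 \vee \E\bigl[\norm{X_t^K}_{\bfH_\rho}^2\bigr]$ appears rather than the strictly larger $\E\bigl[1 \vee \norm{X_t^K}_{\bfH_\rho}^2\bigr]$, is exactly the right (and only) delicate point, and you resolve it correctly.
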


The last result in this section, Corollary~\ref{cor:weak_rates_estimate} below, follows immediately from Corollary~\ref{cor:weakrates}, Lemma~\ref{lem:finiteness_kolmogorov}, and Lemma~\ref{lem:finiteness}.

\begin{cor} \label{cor:weak_rates_estimate}
Assume the setting in Section \ref{subsec:WeakSetting} and let $ X^I \colon [ 0 , T ] \times \Omega \to \bfP_{I} ( \bfH_\rho ) $, $ I \in \cP( \bbH ) $, be $( \cF_t )_{ t \in [0,T] } $-predictable stochastic processes such that for all $ I \in \cP( \bbH ) $, $ t \in [ 0 , T ] $ it holds that $ \sup_{ s \in [ 0 , T ] } \norm{ X_s^I }_{ L^2 ( \P ; \bfH_\rho ) } < \infty $ and $ \P $-a.s.\ that
\begin{equation}
X_t^I =  \e^{\bfA t} \bfP_{I} \xi + \int_0^t \e^{ \bfA(t-s) } \bfP_I \bfF(X_s^I) \ds  + \int_0^t \e^{ \bfA(t-s) } \bfP_I \bfB(X_s^I) \dWs.
\end{equation}
Then it holds for all $ \phi \in C^{2}_\mathrm{b}( \bfH_0 , \R ) $, $ I \in \cP( \bbH ) \setminus \{ \bbH \} $ that
\begin{equation}
\begin{split}
& \abs[\big]{ \E \bigl[ \phi \bigl(X_T^\bbH \bigr)\bigr] - \E \bigl[\phi \bigl(X^I_T \bigr) \bigr] } \\
& \leq \norm{ \phi }_{ C_{ \mathrm{b} }^2( \bfH_0, \R ) } ( 1 \vee T ) \bigl( 1 \vee \norm{ \xi }_{ L^2( \P; \bfH_\rho ) }^2 \bigr) \\
& \quad \cdot \Bigl( \norm{ \xi }_{ L^1(\P ; \bfH_{ 2(\gamma - \beta ) } ) } + \norm[\big]{ \bfF \vert_{ \bfH_\rho } }_{ \mathrm{Lip}^0( \bfH_\rho , \bfH_{ 2(\gamma - \beta ) } ) } + \norm{ \bfLambda^{ -\beta } }_{ L_2(\bfH_0) }^2 \norm[\big]{ \bfB \vert_{ \bfH_\rho } }_{ \mathrm{Lip}^0( \bfH_\rho , L( U , \bfH_{\gamma} ) ) }^2 \Bigr) \\
& \quad \cdot \Bigl( 1 \vee \bigl[ T \bigl( C_{ \bfF }^2 + 2 C_{ \bfB }^2 \bigr) \bigr]^{ \nicefrac{1}{2} } \Bigr) \exp \bigl( T \bigl[ \tfrac{1}{2} + 3 \abs{ \bfF }_{ \mathrm{Lip}^0( \bfH_0, \bfH_0 ) } + 4 \abs{ \bfB }_{ \mathrm{Lip}^0( \bfH_0, L_2( U, \bfH_0 ) ) }^2 \bigr] \bigr) \\
& \quad \cdot \exp \Bigl( T \Bigl[ 2 \norm[\big]{ \bfF \vert_{ \bfH_\rho } }_{ \mathrm{Lip}^0( \bfH_\rho, \bfH_\rho ) } + \norm[\big]{ \bfB \vert_{ \bfH_\rho } }_{ \mathrm{Lip}^0( \bfH_\rho, L_2( U, \bfH_\rho ) ) }^2 \Bigr] \Bigr) \biggl[ \inf_{ h \in \mathbb{H} \setminus I } \abs{\lambda_h} \biggr]^{ \beta - \gamma }  < \infty.
\end{split}
\end{equation}
\end{cor}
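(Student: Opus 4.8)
The statement is a pure bookkeeping consequence of the three cited results, so the plan is to insert the a priori bounds of Lemma~\ref{lem:finiteness_kolmogorov} and Lemma~\ref{lem:finiteness} into the weak-error estimate of Corollary~\ref{cor:weakrates} and then to pad the resulting constant into the claimed closed form, exploiting that each of the factors $(1\vee T)$, $(1\vee\norm{\xi}_{L^2(\P;\bfH_\rho)}^2)$, $(1\vee[T(C_{\bfF}^2+2C_{\bfB}^2)]^{\nicefrac{1}{2}})$ and the two exponentials in the claim is at least $1$.

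First I would write down the right-hand side of Corollary~\ref{cor:weakrates}, which, apart from the common rate factor $[\inf_{h\in\bbH\setminus I}\abs{\lambda_h}]^{\beta-\gamma}$, is a sum of two terms: a first term carrying $\abs{\phi}_{\mathrm{Lip}^0(\bfH_0,\R)}\,\norm{\xi}_{L^1(\P;\bfH_{2(\gamma-\beta)})}$ times an exponential in $\abs{\bfF}_{\mathrm{Lip}^0(\bfH_0,\bfH_0)}$ and $\abs{\bfB}_{\mathrm{Lip}^0(\bfH_0,L_2(U,\bfH_0))}^2$, and a second term of the shape $T\,[\max_{i\in\{1,2\}}\sup_{K}\sup_{t}\abs{u^K(t,\cdot)}_{C^i_{\mathrm b}(\bfP_K(\bfH_0),\R)}]\,[\sup_{K}\sup_{t}(1\vee\E[\norm{X^K_t}_{\bfH_\rho}^2])]\,[\,\norm{\bfF\vert_{\bfH_\rho}}_{\mathrm{Lip}^0(\bfH_\rho,\bfH_{2(\gamma-\beta)})}+\norm{\bfLambda^{-\beta}}_{L_2(\bfH_0)}^2\norm{\bfB\vert_{\bfH_\rho}}_{\mathrm{Lip}^0(\bfH_\rho,L(U,\bfH_\gamma))}^2\,]$.

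Next I would bound the two structural factors of the second term. By Lemma~\ref{lem:finiteness_kolmogorov} the $C^2_{\mathrm b}$-seminorm of $u^K$ is at most $\norm{\phi}_{C^2_{\mathrm b}(\bfH_0,\R)}(1\vee[T(C_{\bfF}^2+2C_{\bfB}^2)]^{\nicefrac{1}{2}})\exp(T[\tfrac12+3\abs{\bfF}_{\mathrm{Lip}^0(\bfH_0,\bfH_0)}+4\abs{\bfB}_{\mathrm{Lip}^0(\bfH_0,L_2(U,\bfH_0))}^2])$; since $\abs{\phi}_{C^1_{\mathrm b}(\bfH_0,\R)}\le\norm{\phi}_{C^2_{\mathrm b}(\bfH_0,\R)}$ and, all exponents being nonnegative, $\exp(T[\abs{\bfF}_{\mathrm{Lip}^0}+\tfrac12\abs{\bfB}_{\mathrm{Lip}^0}^2])\le\exp(T[\tfrac12+3\abs{\bfF}_{\mathrm{Lip}^0}+4\abs{\bfB}_{\mathrm{Lip}^0}^2])$, the corresponding $C^1_{\mathrm b}$-bound from the same lemma is dominated by it, so the $\max_{i\in\{1,2\}}$ is controlled by this single expression. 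For the remaining factor I would square the conclusion of Lemma~\ref{lem:finiteness} and use $1\vee a^2=(1\vee a)^2$ to obtain $\sup_{K}\sup_{t}(1\vee\E[\norm{X^K_t}_{\bfH_\rho}^2])\le(1\vee\norm{\xi}_{L^2(\P;\bfH_\rho)}^2)\exp(T[2\norm{\bfF\vert_{\bfH_\rho}}_{\mathrm{Lip}^0(\bfH_\rho,\bfH_\rho)}+\norm{\bfB\vert_{\bfH_\rho}}_{\mathrm{Lip}^0(\bfH_\rho,L_2(U,\bfH_\rho))}^2])$, which reproduces exactly the last exponential factor of the claim together with the $(1\vee\norm{\xi}_{L^2(\P;\bfH_\rho)}^2)$ prefactor.

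Finally I would reassemble. In the first term I bound $\abs{\phi}_{\mathrm{Lip}^0(\bfH_0,\R)}\le\norm{\phi}_{C^2_{\mathrm b}(\bfH_0,\R)}$ and its exponential by $\exp(T[\tfrac12+3\abs{\bfF}_{\mathrm{Lip}^0}+4\abs{\bfB}_{\mathrm{Lip}^0}^2])$; in the second term I substitute the two estimates just derived. After multiplying each term by whichever of the factors $(1\vee T)$, $(1\vee[T(C_{\bfF}^2+2C_{\bfB}^2)]^{\nicefrac{1}{2}})$, $(1\vee\norm{\xi}_{L^2(\P;\bfH_\rho)}^2)$ and the two exponentials are still missing (all being $\ge1$, this only enlarges the bounds), the first term becomes the common prefactor times $\norm{\xi}_{L^1(\P;\bfH_{2(\gamma-\beta)})}$, and the second term becomes the same prefactor times $\norm{\bfF\vert_{\bfH_\rho}}_{\mathrm{Lip}^0(\bfH_\rho,\bfH_{2(\gamma-\beta)})}+\norm{\bfLambda^{-\beta}}_{L_2(\bfH_0)}^2\norm{\bfB\vert_{\bfH_\rho}}_{\mathrm{Lip}^0(\bfH_\rho,L(U,\bfH_\gamma))}^2$. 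Their sum is precisely the additive bracket appearing in the corollary, so adding the two terms and restoring the rate factor yields the claim. The only delicate point, and the step I expect to spend the most attention on, is this matching of constants: one must verify that the two summands carry \emph{disjoint} pieces of the additive bracket, so that no spurious factor of $2$ is introduced. There is no genuine analytic difficulty here, since every nontrivial estimate is already supplied by the quoted lemmas.
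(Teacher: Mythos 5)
Your proposal is correct and takes precisely the route the paper intends: the paper offers no separate argument, stating only that Corollary~\ref{cor:weak_rates_estimate} ``follows immediately from Corollary~\ref{cor:weakrates}, Lemma~\ref{lem:finiteness_kolmogorov}, and Lemma~\ref{lem:finiteness}'', and your bookkeeping is exactly the omitted computation. In particular, your two key observations --- that the $C^1_{\mathrm{b}}$-bound of Lemma~\ref{lem:finiteness_kolmogorov} is dominated by its $C^2_{\mathrm{b}}$-bound so the $\max_{i\in\{1,2\}}$ collapses, and that squaring Lemma~\ref{lem:finiteness} via $1\vee a^2=(1\vee a)^2$ produces the factor $\bigl(1\vee\norm{\xi}_{L^2(\P;\bfH_\rho)}^2\bigr)\exp\bigl(T\bigl[2\norm{\bfF\vert_{\bfH_\rho}}_{\mathrm{Lip}^0(\bfH_\rho,\bfH_\rho)}+\norm{\bfB\vert_{\bfH_\rho}}_{\mathrm{Lip}^0(\bfH_\rho,L_2(U,\bfH_\rho))}^2\bigr]\bigr)$ --- are the right ones, and distributing the common prefactor over the two summands of Corollary~\ref{cor:weakrates} yields the claimed additive bracket with no spurious constants.
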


\subsection{Examples} \label{subsec:examples}

\subsubsection{Semilinear stochastic wave equations and the hyperbolic Anderson model}

The following elementary lemma is well-known (cf., e.g., Example~37.1 in Sell \& You~\cite{SellYou2002}).

\begin{lemma}\label{lem:interpolation_spaces}
Let $ \K \in \{ \R, \C \} $, let $ (H, \langle \cdot, \cdot \rangle_H, \norm{\cdot}_H ) $ be a $ \K $-Hilbert space,  let $ \bbH \subseteq H $ be an orthonormal basis of $ H $, let $ A \colon D(A) \subseteq H \to H $ be a symmetric diagonal linear operator with $ \inf( \sigma_{\mathrm{P}}(A) ) > 0 $, and let $ (H_r, \langle \cdot, \cdot \rangle_{H_r}, \norm{\cdot}_{H_r} ) $, $ r \in \R $, be a family of interpolation spaces associated to $ A $. Then
\begin{itemize}
\item[(i)]
for all $ v \in \bigcup_{ s \in \R } H_s $, $ r \in \R $ it holds that $ v \in H_r $ if and only if
\begin{equation} \label{eq:interpolation_spaces,1}
\sup_{ w \in \mathrm{span}_{ H_0 }( \bbH ) \setminus \{0\} } \frac{ \abs{ \langle w, v \rangle_{ H_0 } } }{ \norm{ w }_{ H_{-r} } } < \infty,
\end{equation}
\item[(ii)]
for all $ s \in \R $, $ v \in H_{-s} $, $ r \in [-s,\infty) $ it holds that $ v \in H_r $ if and only if
\begin{equation}
\sup_{ w \in H_s \setminus \{0\} } \frac{ \abs{ \langle w, v \rangle_{ H_0 } } }{ \norm{ w }_{ H_{-r} } } < \infty,
\end{equation}
\item[(iii)]
and for all $ r \in \R $, $ v \in H_r $, $ s \in [-r, \infty) $ it holds that
\begin{equation} \label{eq:interpolation_spaces,2}
\norm{ v }_{ H_r } = \sup_{ w \in \mathrm{span}_{ H_0 }( \bbH ) \setminus \{0\} } \frac{ \abs{ \langle w, v \rangle_{ H_0 } } }{ \norm{ w }_{ H_{-r} } } = \sup_{ w \in H_s \setminus \{0\} } \frac{ \abs{ \langle w, v \rangle_{ H_0 } } }{ \norm{ w }_{ H_{-r} } }.
\end{equation}
\end{itemize}
\end{lemma}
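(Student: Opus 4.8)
The plan is to reduce everything to the spectral (Fourier) representation of the diagonal operator $A$ and then to run a Cauchy--Schwarz argument together with an explicit choice of near-optimal test vectors. Since $A$ is a symmetric diagonal linear operator with $\inf(\sigma_{\mathrm P}(A))>0$, there are eigenvalues $(\lambda_b)_{b\in\bbH}\subseteq(0,\infty)$, bounded away from $0$, with $Ab=\lambda_b b$ for all $b\in\bbH$. For every $v\in\bigcup_{s\in\R}H_s$ I would write $\hat v_b=\langle b,v\rangle_{H_0}$ for the generalized Fourier coefficients (well defined through the duality pairing between $H_s$ and $H_{-s}$) and recall from the definition of the interpolation spaces that for all $r\in\R$ one has $v\in H_r$ if and only if $\sum_{b\in\bbH}\abs{\lambda_b^{\,r}\hat v_b}^2<\infty$, in which case $\norm{v}_{H_r}^2=\sum_{b\in\bbH}\abs{\lambda_b^{\,r}\hat v_b}^2$. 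Analogously, for $w\in\mathrm{span}_{H_0}(\bbH)$, say $w=\sum_{b\in F}\hat w_b\,b$ with $F\in\cP_0(\bbH)$, one has $\norm{w}_{H_{-r}}^2=\sum_{b\in F}\lambda_b^{-2r}\abs{\hat w_b}^2$ and $\langle w,v\rangle_{H_0}=\sum_{b\in F}\overline{\hat w_b}\,\hat v_b$.

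First I would prove (i) together with the first equality in (iii). The Cauchy--Schwarz inequality applied to the factorization $\overline{\hat w_b}\,\hat v_b=(\lambda_b^{-r}\overline{\hat w_b})(\lambda_b^{\,r}\hat v_b)$ yields, for every $w=\sum_{b\in F}\hat w_b\,b\in\mathrm{span}_{H_0}(\bbH)\setminus\{0\}$,
\[
\frac{\abs{\langle w,v\rangle_{H_0}}}{\norm{w}_{H_{-r}}}\le\Bigl(\textstyle\sum_{b\in F}\lambda_b^{\,2r}\abs{\hat v_b}^2\Bigr)^{\!\nicefrac{1}{2}},
\]
so the displayed supremum is bounded by $(\sum_{b\in\bbH}\lambda_b^{2r}\abs{\hat v_b}^2)^{\nicefrac12}$, hence by $\norm{v}_{H_r}$ if $v\in H_r$. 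For the reverse bound I would insert the saturating test vectors $w^{(F)}=\sum_{b\in F}\lambda_b^{\,2r}\hat v_b\,b$, $F\in\cP_0(\bbH)$, for which the above ratio equals exactly $(\sum_{b\in F}\lambda_b^{2r}\abs{\hat v_b}^2)^{\nicefrac12}$; letting $F$ exhaust the (at most countable) support of $v$ shows that the supremum is at least $(\sum_{b\in\bbH}\lambda_b^{2r}\abs{\hat v_b}^2)^{\nicefrac12}$. Consequently the supremum equals $(\sum_{b\in\bbH}\lambda_b^{2r}\abs{\hat v_b}^2)^{\nicefrac12}$, which is finite exactly when $v\in H_r$ and then equals $\norm{v}_{H_r}$; this is precisely (i) and the first identity in (iii).

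Next I would upgrade the test class from $\mathrm{span}_{H_0}(\bbH)$ to $H_s$ in order to obtain (ii) and the second identity in (iii). The point is the chain $\mathrm{span}_{H_0}(\bbH)\subseteq H_s\subseteq H_{-r}$, valid because each $b\in\bbH$ lies in every interpolation space and because the hypothesis $r\ge-s$ gives the continuous embedding $H_s\hookrightarrow H_{-r}$. Since $\mathrm{span}_{H_0}(\bbH)\subseteq H_s$, the supremum over $H_s\setminus\{0\}$ dominates that over $\mathrm{span}_{H_0}(\bbH)\setminus\{0\}$, so finiteness of the former forces $v\in H_r$ by (i); this settles the nontrivial implication of the equivalence in (ii). Conversely, assuming $v\in H_r$, the same Cauchy--Schwarz estimate (now carried out with the absolutely convergent series $\langle w,v\rangle_{H_0}=\sum_b\overline{\hat w_b}\,\hat v_b$ furnished by the $H_{-r}$--$H_r$ duality) bounds the ratio by $\norm{v}_{H_r}$ uniformly over all $w\in H_{-r}$, hence over all $w\in H_s$. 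Combining with the first identity in (iii) gives the squeeze $\norm{v}_{H_r}=\sup_{\mathrm{span}}\le\sup_{H_s}\le\norm{v}_{H_r}$, so all three quantities agree, yielding (ii) and the remaining identity in (iii).

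The only genuinely delicate point, and the one I would be most careful about, is the bookkeeping of the generalized pairing $\langle w,v\rangle_{H_0}$ and the coefficients $\hat v_b$ when $v$ sits in a negative-order space: one must check that $\langle w,v\rangle_{H_0}=\sum_b\overline{\hat w_b}\,\hat v_b$ is the correct interpretation of the $H_s$--$H_{-s}$ (resp.\ $H_{-r}$--$H_r$) duality pairing and that this series converges absolutely, so that the Cauchy--Schwarz manipulation and the passage to the supremum over the dense subspace $\mathrm{span}_{H_0}(\bbH)$ are legitimate. Everything else is a routine consequence of the spectral representation of $A$ and of the norms $\norm{\cdot}_{H_r}$.
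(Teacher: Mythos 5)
The paper does not actually prove this lemma: it is stated as ``elementary and well-known'' with a pointer to Example~37.1 in Sell \& You, so there is no in-paper argument to compare against. Your proposal supplies a correct self-contained proof. The three ingredients all work: the Cauchy--Schwarz bound $\abs{\langle w,v\rangle_{H_0}}\leq \norm{w}_{H_{-r}}\bigl(\sum_{b}\lambda_b^{2r}\abs{\hat v_b}^2\bigr)^{\nicefrac{1}{2}}$ on finitely supported $w$; the saturating vectors $w^{(F)}=\sum_{b\in F}\lambda_b^{2r}\hat v_b\,b$, which make the ratio exactly $\bigl(\sum_{b\in F}\lambda_b^{2r}\abs{\hat v_b}^2\bigr)^{\nicefrac{1}{2}}$ and, upon exhausting the countable support, show the supremum over $\mathrm{span}_{H_0}(\bbH)\setminus\{0\}$ equals $\bigl(\sum_{b\in\bbH}\lambda_b^{2r}\abs{\hat v_b}^2\bigr)^{\nicefrac{1}{2}}$, giving (i) and the first identity in (iii); and the embedding chain $\mathrm{span}_{H_0}(\bbH)\subseteq H_s\subseteq H_{-r}$ (valid precisely because $s\geq -r$), which squeezes the supremum over $H_s$ between the supremum over the span and $\norm{v}_{H_r}$, giving (ii) and the second identity. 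The one point that carries real content, and which you correctly single out, is the identification step: that the duality coefficients $\hat v_b=\langle b,v\rangle_{H_0}$ of an element $v\in\bigcup_{s}H_s$ determine $v$, and that $v\in H_r$ if and only if $\sum_b\abs{\lambda_b^{r}\hat v_b}^2<\infty$ (for the ``if'' direction one forms $u=\sum_b\hat v_b\,b\in H_r$ and checks $u=v$ in $H_{\min(r,s)}$ because the coefficient functionals separate points there). With that standard property of the interpolation-space scale in hand, your argument is complete and is essentially the textbook proof the paper implicitly invokes.
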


In the next result, Corollary~\ref{cor:wave_equation}, we illustrate Corollary~\ref{cor:weakrates} by a simple example. The proof of Corollary~\ref{cor:wave_equation} is elementary and well-known.

\begin{cor} \label{cor:wave_equation}
Let $ T, \vartheta \in ( 0 , \infty ) $, $ \gamma \in ( \nicefrac{1}{4}, \nicefrac{1}{2} ) $, $ \rho \in [0, \nicefrac{1}{2} ] $, $ r \in [ \nicefrac{1}{6}, \infty) $, let $ ( \Omega , \cF , \P ) $ be a probability space with a normal filtration $ ( \cF_t )_{ t \in [ 0 , T ] } $, let $ ( H, \langle \cdot, \cdot \rangle_H, \norm{\cdot}_H ) $ be the $ \R $-Hilbert space given by $ ( H, \langle \cdot, \cdot \rangle_H, \norm{\cdot}_H ) = \bigl( L^2( \lambda_{ (0,1) }; \R ), \langle \cdot, \cdot \rangle_{ L^2( \lambda_{ (0,1) }; \R ) }, \norm{ \cdot }_{ L^2( \lambda_{ (0,1) }; \R ) } \bigr) $, let $ ( W_t )_{ t \in [ 0 , T ] } $ be an $ \id_H $-cylindrical $ ( \cF_t )_{ t \in [ 0 , T ] } $-Wiener process, let $ \{ e_n \}_{ n \in \N } \subseteq H $ satisfy for all $ n \in \N $ and $ \lambda_{ (0,1) } $-a.e.\ $ x \in ( 0, 1 ) $ that $ e_n( x ) = \sqrt{2} \sin( n \pi x ) $, let $ A \colon D(A) \subseteq H \to H $ be the Laplacian with Dirichlet boundary conditions on $ H $ multiplied by $ \vartheta $, let $ ( H_s, \langle \cdot, \cdot \rangle_{ H_s }, \norm{ \cdot }_{ H_s } )$, $ s \in \R $, be a family of interpolation spaces associated to $ -A $, let $ \bfP_N \colon H_0 \times H_{ -\nicefrac{1}{2} } \to H_0 \times H_{ -\nicefrac{1}{2} } $, $ N \in \N \cup \{ \infty \} $, be the mappings which satisfy for all $ N \in \N \cup \{ \infty \} $, $ (v,w) \in H_0 \times H_{ -\nicefrac{1}{2} } $ that $ \bfP_N( v, w ) =  \sum_{ n = 1 }^N ( \langle e_n, v \rangle_H e_n, \langle \sqrt{\vartheta} \pi n e_n, w \rangle_{ H_{ -\nicefrac{1}{2} } } \sqrt{\vartheta} \pi n e_n ) $, let $ \bfA \colon D ( \bfA ) \subseteq H_0 \times H_{ -\nicefrac{1}{2} } \to H_0 \times H_{ -\nicefrac{1}{2} } $ be the linear operator such that $ D ( \bfA ) = H_{ \nicefrac{1}{2} } \times H_0 $ and such that for all $ (v,w) \in H_{ \nicefrac{1}{2} } \times H_0 $ it holds that $ \bfA( v , w ) = ( w , A v ) $, let $ \xi \in L^2 ( \P \vert_{\cF_0} ; H_{ \nicefrac{1}{2} } \times H_0 ) $, $ \phi \in C_{ \mathrm{b} }^2( H_0 \times H_{ -\nicefrac{1}{2} }, \R ) $, $ f \in \mathrm{Lip}^2( (0,1) \times \R, \R ) $, $ B \in \mathrm{Lip}^0( H_0 , L_2( H_0, H_{ -\nicefrac{1}{2} } ) ) $ satisfy that $ B \vert_{ H_\rho } \in \mathrm{Lip}^0( H_\rho, L_2( H_0, H_{ \rho - \nicefrac{1}{2} } ) \cap L( H_0, H_{ \gamma - \nicefrac{1}{2} } ) ) $, $ B \vert_{ H_r } \in C_{ \mathrm{b} }^2( H_r, L_2( H_0, H_{ -\nicefrac{1}{2} } ) ) $, and $ \sup_{ x \in H_r } $ $ \sup_{ v_1, v_2 \in H_r, \; \norm{ v_1 }_{ H_0 } \vee \norm{ v_2 }_{ H_0 } \leq 1 } \norm{ B''(x)( v_1, v_2 ) }_{ L_2( H_0, H_{ -\nicefrac{1}{2} } ) } < \infty $, and let $ \bfF \colon H_0 \times H_{ -\nicefrac{1}{2} } \to H_{ \nicefrac{1}{2} } \times H_0 $ and $ \bfB \colon H_0 \times H_{ -\nicefrac{1}{2} } \to L_2( H_0, H_0 \times H_{ - \nicefrac{1}{2} } ) $ be the mappings which satisfy for all $ (v,w) \in H_0 \times H_{ -\nicefrac{1}{2} } $ and $ \lambda_{ (0,1) } $-a.e.\ $ x \in ( 0, 1 ) $ that $ \bigl( \bfF(v,w) \bigr)( x ) = \bigl( 0, f( x, v(x) ) \bigr) $ and $ \bfB( v, w ) = \bigl( 0, B( v ) \bigr) $. Then
\begin{itemize}
\item[(i)]
it holds that $ \bfF \in \mathrm{Lip}^0( H_0 \times H_{ -\nicefrac{1}{2} }, H_{ \nicefrac{1}{2} } \times H_0 ) $, $ \bfF \vert_{ H_r \times H_{ r - \nicefrac{1}{2} } } \in \mathrm{Lip}^2( H_r \times H_{ r - \nicefrac{1}{2} }, H_{ \nicefrac{1}{2} } \times H_0 ) $, $ \bfB \in \mathrm{Lip}^0( H_0 \times H_{ -\nicefrac{1}{2} }, L_2( H_0, H_0 \times H_{ -\nicefrac{1}{2} } ) ) $, $ \bfB \vert_{ H_\rho \times H_{ \rho - \nicefrac{1}{2} } } \in \mathrm{Lip}^0( H_\rho \times H_{ \rho - \nicefrac{1}{2} }, L_2( H_0, H_\rho \times H_{ \rho - \nicefrac{1}{2} } ) \cap L( H_0, H_\gamma \times H_{ \gamma - \nicefrac{1}{2} } ) ) $, $ \bfB \vert_{ H_r \times H_{ r - \nicefrac{1}{2} } } \in C_{ \mathrm{b} }^2( H_r \times H_{ r - \nicefrac{1}{2} }, L_2( H_0, H_0 \times H_{ -\nicefrac{1}{2} } ) ) $, and
\begin{equation}
\forall \tp \delta \in ( - \infty, \nicefrac{1}{4} ) \colon \sup_{ \substack{ x \in H_r \times H_{ r - \nicefrac{1}{2} }, \\ v_1, v_2 \in H_r \times H_{ r - \nicefrac{1}{2} } \setminus \{0\} } } \tfrac{ \norm{ \bfF''(x)( v_1, v_2 ) }_{ H_\delta \times H_{ \delta -\nicefrac{1}{2} } } + \norm{ \bfB''(x)( v_1, v_2 ) }_{ L_2( H_0, H_0 \times H_{ -\nicefrac{1}{2} } ) } }{ \norm{v_1}_{ H_0 \times H_{ -\nicefrac{1}{2} } } \norm{v_2}_{ H_0 \times H_{ -\nicefrac{1}{2} } } }  < \infty,  \label{eq:wave_equation,9}
\end{equation}
\item[(ii)]
there exist up to modifications unique $ ( \cF_t )_{ t \in [0,T] } $-predictable stochastic processes $ X^N \colon [ 0 , T ] \times \Omega \to \bfP_N ( H_\rho \times H_{ \rho - \nicefrac{1}{2} } )$, $ N \in \N \cup \{ \infty \} $, which satisfy for all $ N \in \N \cup \{ \infty \} $, $ t \in [ 0 , T ] $ that $ \sup_{ s \in [ 0 , T ] } \norm{ X_s^N }_{ L^2 ( \P ; H_\rho \times H_{ \rho - \nicefrac{1}{2} } ) } < \infty $ and $ \P $-a.s.\ that
\begin{equation}
X_t^N =  \e^{\bfA t} \bfP_N \xi + \int_0^t \e^{ \bfA(t-s) } \bfP_N \bfF(X_s^N) \ds  + \int_0^t \e^{ \bfA(t-s) } \bfP_N \bfB(X_s^N) \dWs,
\end{equation}
\item[(iii)]
and for all $ \epsilon \in ( 4( \nicefrac{1}{2} -\gamma) , \infty ) $ there exists a real number $ C \in [0, \infty) $ such that for all $ N \in \N $ it holds that
\begin{equation}
\abs[\big]{ \E \bigl[ \phi \bigl(X_T^\infty \bigr)\bigr] - \E \bigl[\phi \bigl(X_T^N \bigr) \bigr] } \leq C \cdot N^{ \epsilon - 1 }.
\end{equation}
\end{itemize}
\end{cor}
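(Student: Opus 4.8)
The plan is to read Corollary~\ref{cor:wave_equation} as a concrete instantiation of the abstract framework of Section~\ref{subsec:WeakSetting} and to obtain its three parts from Remark~\ref{rmk:existence_regularity} and Corollary~\ref{cor:weak_rates_estimate}. First I would fix the dictionary between the two settings: put $ U = H $, let $ \bbH = \{ e_n \}_{ n \in \N } $ be the eigenbasis of the scaled Dirichlet Laplacian $ A $ with eigenvalues $ \lambda_{ e_n } = - \vartheta \pi^2 n^2 $ (so that $ \abs{ \lambda_{ e_n } }^{ \nicefrac{1}{2} } = \sqrt{ \vartheta } \, \pi n $ and the concrete $ \bfP_N $ agrees with the abstract $ \bfP_{ \{ e_1, \dots, e_N \} } $), and identify $ ( H_s )_{ s \in \R } $ with the usual fractional spaces, $ H_s \cong H^{ 2s } $. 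The key bookkeeping point is that, because $ \bfH_r = H_{ \nicefrac{r}{2} } \times H_{ \nicefrac{r}{2} - \nicefrac{1}{2} } $, one must run Section~\ref{subsec:WeakSetting} with its index $ \gamma $ chosen as $ 2 \gamma $ and its index $ \rho $ chosen as $ 2 \rho $: indeed $ \bfB( v, w ) = ( 0, B(v) ) \in L( U, \bfH_{ 2\gamma } ) $ is equivalent to $ B(v) \in L( H_0, H_{ \gamma - \nicefrac{1}{2} } ) $, and $ \bfP_N( \bfH_{ 2\rho } ) = \bfP_N( H_\rho \times H_{ \rho - \nicefrac{1}{2} } ) $.

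The technical core is part~(i), the verification that the Nemytskii operator $ \bfF( v, w ) = ( 0, f( \cdot, v( \cdot ) ) ) $ and the multiplication operator $ \bfB( v, w ) = ( 0, B(v) ) $ enjoy the stated mapping and differentiability properties between the fractional spaces. Here I would use the duality description of the interpolation norms from Lemma~\ref{lem:interpolation_spaces}, which reduces membership of a product such as $ \bfF''(x)(v_1, v_2) $ in a negative-index space $ H_{ \delta - \nicefrac{1}{2} } $ to a uniform bound on $ \int_{ (0,1) } w \, v_1 v_2 \ud x $ over test functions $ w $ of unit $ H_{ \nicefrac{1}{2} - \delta } $-norm, together with one-dimensional Sobolev embeddings. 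For $ \delta < \nicefrac{1}{4} $ the test space $ H_{ \nicefrac{1}{2} - \delta } \cong H^{ 1 - 2\delta } $ embeds into $ L^\infty $, which produces the bilinear estimate~\eqref{eq:wave_equation,9} and hence the finiteness of $ C_{ \bfF } $ and $ C_{ \bfB } $ (the case $ \delta = 0 $); the constraints $ \gamma > \nicefrac{1}{4} $ and $ r \ge \nicefrac{1}{6} $ are precisely the embedding thresholds (e.g.\ $ H_r \cong H^{ 2r } \hookrightarrow L^6 $, needed for the trilinear estimates underlying the $ \mathrm{Lip}^2 $ and $ C_\mathrm{b}^2 $ claims). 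That $ f \in \mathrm{Lip}^2( (0,1) \times \R, \R ) $ and the uniform second-derivative bound on $ B $ transfer to $ \bfF $ and $ \bfB $ is then a routine application of these product estimates. I expect this to be the main obstacle, since it is where both the spatial dimension and all three parameter restrictions are actually consumed.

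Granting part~(i), part~(ii) is essentially immediate: the properties established there are exactly the hypotheses of Section~\ref{subsec:WeakSetting} at the level $ \rho^{ \mathrm{abs} } = 2 \rho $ — using $ \bfF \in \mathrm{Lip}^0( \bfH_0, \bfH_1 ) $ and $ 2( 2\gamma - \beta ) \le 1 $ to place both $ \bfF $ and $ \xi \in \bfH_1 $ into $ \bfH_{ 2( 2\gamma - \beta ) } $ — so Remark~\ref{rmk:existence_regularity} furnishes the unique $ ( \cF_t )_{ t \in [0,T] } $-predictable processes $ X^N $ together with the asserted $ L^2( \P; \bfH_{ 2\rho } ) $-bounds.

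For part~(iii) I would apply Corollary~\ref{cor:weak_rates_estimate}; the only genuine decision is the choice of $ \beta $. Since $ \bfLambda $ has eigenvalues $ \abs{ \lambda_h }^{ \nicefrac{1}{2} } $, each of multiplicity two, one computes $ \norm{ \bfLambda^{ -\beta } }_{ L_2( \bfH_0 ) }^2 = 2 \sum_{ n \in \N } ( \vartheta \pi^2 n^2 )^{ -\beta } $, which is finite exactly when $ \beta > \nicefrac{1}{2} $; and the hypothesis $ \epsilon > 4( \nicefrac{1}{2} - \gamma ) $ is precisely equivalent to the solvability of $ \beta > \nicefrac{1}{2} $ together with $ 2( \beta - 2\gamma ) \le \epsilon - 1 $ under the structural bound $ \beta \le 2\gamma $ (which needs $ \gamma > \nicefrac{1}{4} $). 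I would therefore fix such a $ \beta $ (compatible with the setting, in particular with $ \rho^{ \mathrm{abs} } \le 2( 2\gamma - \beta ) $), apply Corollary~\ref{cor:weak_rates_estimate} — whose right-hand side, once $ \phi, \xi, f, B, T, \vartheta $ are fixed, is an $ N $-independent finite constant times the convergence factor — and finally estimate that factor by $ \bigl[ \inf_{ h \in \bbH \setminus I_N } \abs{ \lambda_h } \bigr]^{ \beta - 2\gamma } = ( \vartheta \pi^2 ( N + 1 )^2 )^{ \beta - 2\gamma } \le C N^{ 2( \beta - 2\gamma ) } \le C N^{ \epsilon - 1 } $, where $ I_N = \{ e_1, \dots, e_N \} $. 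Collecting the $ N $-independent factors into a single constant $ C $ yields the claimed bound.
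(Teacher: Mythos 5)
Your proposal follows the paper's own proof essentially step for step: the same instantiation of the abstract setting ($\gamma^{\mathrm{abs}}=2\gamma$, $\rho^{\mathrm{abs}}=2\rho$, eigenvalues $\lambda_n=-\vartheta\pi^2n^2$), the same Sobolev-embedding/H\"{o}lder/duality arguments via Lemma~\ref{lem:interpolation_spaces} for the Nemytskii and multiplication operators in part~(i) (with exactly the thresholds $r\geq\nicefrac{1}{6}$ for the $L^6$-embedding and $\delta<\nicefrac{1}{4}$ for the $L^\infty$-embedding), Remark~\ref{rmk:existence_regularity} for part~(ii), and for part~(iii) the same choice $\beta=2\gamma+\nicefrac{(\epsilon-1)}{2}\in(\nicefrac{1}{2},2\gamma]$ together with the identical computations of $\norm{\bfLambda^{-\beta}}_{L_2(\bfH_0)}$ and of the rate $\abs{\lambda_{N+1}}^{\beta-2\gamma}\leq C\cdot N^{\epsilon-1}$. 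The only inessential difference is that you route the conclusion through Corollary~\ref{cor:weak_rates_estimate} whereas the paper invokes Corollary~\ref{cor:weakrates} directly (both are consequences of Theorem~\ref{thm:weakrates}), so this is in substance the same proof.
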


\begin{proof}[Proof of Corollary~\ref{cor:wave_equation}]
Throughout this proof let $f_{ k, \ell } \colon (0,1) \times \R \to \R $, $ k, \ell \in \{ 0, 1, 2 \} $ with $ k + \ell \leq 2 $, be the mappings such that for all $ k, \ell \in \{ 0, 1, 2 \} $, $ ( x, y ) \in (0,1) \times \R $ with $ k + \ell \leq 2 $ it holds that $ f_{ k, \ell } (x,y) = \bigl( \frac{ \partial^{ k + \ell } }{ \partial x^k \partial y^\ell } f \bigr) (x,y) $ and let $ F \colon H_0 \to H_0 $ be the mapping such that for all $ v \in H_0 $ and $ \lambda_{ (0,1) } $-a.e.\ $ x \in ( 0, 1 ) $ it holds that $ \bigl( F( v ) \bigr) (x) =  f( x, v(x) ) $. Then note for all $ u, v \in H_0 $, $ w \in H_{ -\nicefrac{1}{2} } $ that $ \bigl( \bfF(v,w) \bigr)( x ) = \bigl( 0, F(v) \bigr) $ and that
\begin{equation}
\norm{ F(u) - F(v) }_{ H_0 } = \biggl( \int_{0}^{1} \abs{ f(x,u(x)) - f(x,v(x)) }^2 \dx \biggr)^{ \nicefrac{1}{2} } \leq \abs{ f }_{ \mathrm{Lip}^0( (0,1) \times \R, \R ) } \norm{ u - v }_{ H_0 },
\end{equation}
which proves that $ F \in \mathrm{Lip}^0( H_0, H_0 ) $ and hence that $ \bfF \in \mathrm{Lip}^0( H_0 \times H_{ -\nicefrac{1}{2} }, H_{ \nicefrac{1}{2} } \times H_0 ) $. Next observe that the Sobolev Embedding Theorem ensures for all $ \delta \in [1, 6] $ that
\begin{equation} \label{eq:wave_equation,5}
\sup_{ w \in H_r \setminus \{ 0 \} } \frac{ \norm{ w }_{ L^\delta( \lambda_{ (0,1) }; \R ) } }{ \norm{ w }_{ H_r } } < \infty.
\end{equation}
Moreover, it holds for all $ v, h \in H_0 $ and $ \lambda_{ (0,1) } $-a.e.\ $ x \in ( 0, 1 ) $ that
\begin{equation}
\begin{split}
& \abs{ f( x, v(x) + h(x) ) - f( x, v(x) ) -  f_{0,1}( x, v(x) ) h(x) } \\
& = \abs[\bigg]{ \int_{0}^{1} \bigl[ f_{ 0, 1 }( x, v(x) + y h(x) ) - f_{ 0, 1 }( x, v(x) ) \bigr] h(x) \ud y } \leq \abs{ f }_{ \mathrm{Lip}^1( (0,1) \times \R, \R ) } \abs{ h(x) }^2.
\end{split}
\end{equation}
This, H\"{o}lder's inequality, and \eqref{eq:wave_equation,5} imply for all $ v \in H_r $, $ h \in H_r \setminus \{ 0 \} $ that
\begin{equation} \label{eq:wave_equation,1}
\begin{split}
& \frac{1}{ \norm{h}_{ H_r } }\biggl( \int_{0}^{1} \abs{ f( x, v(x) + h(x) ) - f( x, v(x) ) -  f_{0,1}( x, v(x) ) h(x) }^2 \dx \biggr)^{ \nicefrac{1}{2} } \\
& \leq  \abs{ f }_{ \mathrm{Lip}^1( (0,1) \times \R, \R ) } \frac{\norm{ h }_{ L^4( \lambda_{ (0,1) } ; \R ) }^2}{ \norm{h}_{ H_r } } \leq \abs{ f }_{ \mathrm{Lip}^1( (0,1) \times \R, \R ) } \biggl( \sup_{ w \in H_r \setminus \{ 0 \} } \frac{ \norm{ w }_{ L^4( \lambda_{ (0,1) } ; \R ) } }{ \norm{ w }_{ H_r } }\biggr)^2 \norm{ h }_{ H_r } < \infty.
\end{split}
\end{equation}
In addition, it holds for all $ v, h \in H_r $ that
\begin{equation} \label{eq:wave_equation,2}
\begin{split}
\biggl( \int_{0}^{1} \abs{ f_{ 0, 1 }( x, v(x) ) h(x) }^2 \dx \biggr)^{ \nicefrac{1}{2} } & \leq \abs{ f }_{ C_{ \mathrm{b} }^1( (0,1) \times \R, \R ) } \norm{ h }_{ H_0 } \leq \abs{ f }_{ C_{ \mathrm{b} }^1( (0,1) \times \R, \R ) } \norm{ h }_{ H_r } \\
& = \abs{ f }_{ \mathrm{Lip}^0( (0,1) \times \R, \R ) } \norm{ h }_{ H_r }  < \infty.
\end{split}
\end{equation}
Inequalities \eqref{eq:wave_equation,1} and \eqref{eq:wave_equation,2} prove that $ F \vert_{ H_r } \colon H_r \to H_0 $ is Fr\'{e}chet differentiable, that for all $ v, h \in H_r $ and $ \lambda_{ (0,1) } $-a.e.\ $ x \in ( 0, 1 ) $ it holds that
\begin{equation} \label{eq:wave_equation,7}
\bigl( F'(v) h \bigr) (x) = f_{ 0, 1 }( x, v(x) ) h(x),
\end{equation}
and that $ \sup_{ v \in H_r } \norm{ F'(v) }_{ L( H_r, H_0 ) } \leq \abs{ f }_{ C_{ \mathrm{b} }^1( (0,1) \times \R, \R ) } < \infty $. Furthermore, H\"{o}lder's inequality and \eqref{eq:wave_equation,5} show for all $ u, v, h \in H_r $ that
\begin{equation}
\begin{split}
\norm{ ( F'(u) - F'(v) ) h }_{ H_0 } & = \biggl( \int_{0}^{1} \abs[\big]{ \bigl[  f_{ 0, 1 }( x, u(x) ) -  f_{ 0, 1 }( x, v(x) ) \bigr] h(x) }^2 \dx \biggr)^{ \nicefrac{1}{2} } \\
& \leq \abs{ f }_{ \mathrm{Lip}^1( (0,1) \times \R, \R ) } \norm{ u - v }_{ L^4( \lambda_{ (0,1) } ; \R ) } \norm{ h }_{ L^4( \lambda_{ (0,1) } ; \R ) } \\
& \leq \abs{ f }_{ \mathrm{Lip}^1( (0,1) \times \R, \R ) } \biggl( \sup_{ w \in H_r \setminus \{ 0 \} } \frac{ \norm{ w }_{ L^4( \lambda_{ (0,1) } ; \R ) } }{ \norm{ w }_{ H_r } }\biggr)^2 \norm{ u - v }_{ H_r } \norm{ h }_{ H_r } < \infty,
\end{split}
\end{equation}
which ensures that $ F \vert_{ H_r } \in \mathrm{Lip}^1( H_r, H_0 ) $. Similarly, observe for all $ v, h, g \in H_0 $ and $ \lambda_{ (0,1) } $-a.e.\ $ x \in ( 0, 1 ) $ that
\begin{equation}
\begin{split}
& \abs{ f_{ 0, 1 }( x, v(x) + g(x) ) h(x) - f_{ 0, 1 }( x, v(x) ) h(x) -  f_{0,2}( x, v(x) ) h(x) g(x) } \\
& = \abs[\bigg]{ \int_{0}^{1} \bigl[ f_{ 0, 2 }( x, v(x) + y g(x) ) - f_{ 0, 2 }( x, v(x) ) \bigr] h(x) g(x) \ud y } \leq \abs{ f }_{ \mathrm{Lip}^2( (0,1) \times \R, \R ) } \abs{ h(x) } \abs{ g(x) }^2.
\end{split}
\end{equation}
This, H\"{o}lder's inequality, and \eqref{eq:wave_equation,5} establish for all $ v, h \in H_r $, $ g \in H_r \setminus \{ 0 \} $ that
\begin{equation} \label{eq:wave_equation,3}
\begin{split}
& \frac{1}{ \norm{g}_{ H_r } } \biggl( \int_{0}^{1} \abs{ f_{ 0, 1 }( x, v(x) + g(x) ) h(x) - f_{ 0, 1 }( x, v(x) ) h(x) -  f_{0,2}( x, v(x) ) h(x) g(x) }^2 \dx \biggr)^{ \nicefrac{1}{2} } \\
& \leq \frac{ \abs{ f }_{ \mathrm{Lip}^2( (0,1) \times \R, \R ) } }{ \norm{g}_{ H_r } } \biggl( \int_{0}^{1} \abs{ h(x) }^2 \abs{ g(x) }^4 \dx \biggr)^{ \nicefrac{1}{2} } \\
& \leq \abs{ f }_{ \mathrm{Lip}^2( (0,1) \times \R, \R ) } \frac{ \norm{ h }_{ L^6( \lambda_{ (0,1) } ; \R ) } \norm{ g }_{ L^6( \lambda_{ (0,1) } ; \R ) }^2 }{ \norm{g}_{ H_r } } \\
& \leq \abs{ f }_{ \mathrm{Lip}^2( (0,1) \times \R, \R ) } \biggl( \sup_{ w \in H_r \setminus \{ 0 \} } \frac{ \norm{ w }_{ L^6( \lambda_{ (0,1) } ; \R ) } }{ \norm{ w }_{ H_r } } \biggr)^3 \norm{ h }_{ H_r } \norm{ g }_{ H_r } < \infty.
\end{split}
\end{equation}
Furthermore, H\"{o}lder's inequality and \eqref{eq:wave_equation,5} also prove for all $ v, h, g \in H_r $ that
\begin{equation} \label{eq:wave_equation,4}
\begin{split}
& \biggl( \int_{0}^{1} \abs{ f_{0,2}( x, v(x) ) h(x) g(x) }^2 \dx \biggr)^{ \nicefrac{1}{2} } \leq \abs{ f }_{ C_{ \mathrm{b} }^2( (0,1) \times \R, \R ) } \norm{ h }_{ L^4( \lambda_{ (0,1) } ; \R ) } \norm{ g }_{ L^4( \lambda_{ (0,1) } ; \R ) } \\
& \leq \abs{ f }_{ C_{ \mathrm{b} }^2( (0,1) \times \R, \R ) } \biggl( \sup_{ w \in H_r \setminus \{ 0 \} } \frac{ \norm{ w }_{ L^4( \lambda_{ (0,1) } ; \R ) } }{ \norm{ w }_{ H_r } } \biggr)^2 \norm{ h }_{ H_r } \norm{ g }_{ H_r } \\
& = \abs{ f }_{ \mathrm{Lip}^1( (0,1) \times \R, \R ) } \biggl( \sup_{ w \in H_r \setminus \{ 0 \} } \frac{ \norm{ w }_{ L^4( \lambda_{ (0,1) } ; \R ) } }{ \norm{ w }_{ H_r } } \biggr)^2 \norm{ h }_{ H_r } \norm{ g }_{ H_r }  < \infty.
\end{split}
\end{equation}
Combining \eqref{eq:wave_equation,3} and \eqref{eq:wave_equation,4} ensures that $ F \vert_{ H_r } \colon H_r \to H_0 $ is twice Fr\'{e}chet differentiable, that for all $ v, h, g \in H_r $ and $ \lambda_{ (0,1) } $-a.e.\ $ x \in ( 0, 1 ) $ it holds that
\begin{equation}
\bigl( F''(v)( h, g ) \bigr) (x) = f_{ 0, 2 }( x, v(x) ) h(x) g(x),
\end{equation}
and that 
\begin{equation}
\sup_{ v \in H_r } \norm{ F''(v) }_{ L^{(2)}( H_r, H_0 ) } \leq \abs{ f }_{ C_{ \mathrm{b} }^2( (0,1) \times \R, \R ) } \biggl( \sup_{ w \in H_r \setminus \{ 0 \} } \frac{ \norm{ w }_{ L^4( \lambda_{ (0,1) } ; \R ) } }{ \norm{ w }_{ H_r } } \biggr)^2 < \infty.
\end{equation}
In addition, H\"{o}lder's inequality and \eqref{eq:wave_equation,5} establish for all $ u, v, h, g \in H_r $ that
\begin{equation}
\begin{split}
& \norm{ ( F''(u) - F''(v) ) ( h, g ) }_{ H_0 } = \biggl( \int_{0}^{1} \abs[\big]{ \bigl[  f_{ 0, 2 }( x, u(x) ) -  f_{ 0, 1 }( x, v(x) ) \bigr] h(x) g(x) }^2 \dx \biggr)^{ \nicefrac{1}{2} } \\
& \leq \abs{ f }_{ \mathrm{Lip}^2( (0,1) \times \R, \R ) } \norm{ u - v }_{ L^6( \lambda_{ (0,1) } ; \R ) } \norm{ h }_{ L^6( \lambda_{ (0,1) } ; \R ) } \norm{ g }_{ L^6( \lambda_{ (0,1) } ; \R ) } \\
& \leq \abs{ f }_{ \mathrm{Lip}^2( (0,1) \times \R, \R ) } \biggl( \sup_{ w \in H_r \setminus \{ 0 \} } \frac{ \norm{ w }_{ L^6( \lambda_{ (0,1) } ; \R ) } }{ \norm{ w }_{ H_r } }\biggr)^3 \norm{ u - v }_{ H_r } \norm{ h }_{ H_r } \norm{ g }_{ H_r } < \infty.
\end{split}
\end{equation}
This shows that $ F \vert_{ H_r } \in \mathrm{Lip}^2( H_r, H_0 ) $ and hence that \smash{$ \bfF \vert_{ H_r \times H_{ r - \nicefrac{1}{2} } } \in \mathrm{Lip}^2( H_r \times H_{ r - \nicefrac{1}{2} }, H_{ \nicefrac{1}{2} } \times H_0 ) $}. Next, note that the assumptions that $ B \in \mathrm{Lip}^0( H_0 , L_2( H_0, H_{ -\nicefrac{1}{2} } ) ) $, $ B \vert_{ H_\rho } \in \mathrm{Lip}^0( H_\rho, L_2( H_0, H_{ \rho - \nicefrac{1}{2} } ) $ $ \cap L( H_0, H_{ \gamma - \nicefrac{1}{2} } ) ) $, and $ B \vert_{ H_r } \in C_{ \mathrm{b} }^2( H_r, L_2( H_0, H_{ -\nicefrac{1}{2} } ) ) $ ensure that $ \bfB \in \mathrm{Lip}^0( H_0 \times H_{ -\nicefrac{1}{2} }, L_2( H_0, H_0 \times H_{ -\nicefrac{1}{2} } ) ) $, $ \bfB \vert_{ H_\rho \times H_{ \rho - \nicefrac{1}{2} } } \in \mathrm{Lip}^0( H_\rho \times H_{ \rho - \nicefrac{1}{2} }, L_2( H_0, H_\rho \times H_{ \rho - \nicefrac{1}{2} } ) \cap L( H_0, H_\gamma \times H_{ \gamma - \nicefrac{1}{2} } ) ) $, and $ \bfB \vert_{ H_r \times H_{ r - \nicefrac{1}{2} } } \in C_{ \mathrm{b} }^2( H_r \times H_{ r - \nicefrac{1}{2} }, L_2( H_0, H_0 \times H_{ -\nicefrac{1}{2} } ) ) $. In addition, Lemma~\ref{lem:interpolation_spaces} proves for all $ \delta \in ( -\infty, \nicefrac{1}{4} ) $ that
\begin{equation}
\begin{split}
\norm{ F''(v)( h, g) }_{ H_{ \delta - \nicefrac{1}{2} } } & = \sup_{ w \in H_{ \nicefrac{1}{2} - \delta } \setminus \{ 0 \} } \frac{ \langle w, F''(v)( h, g ) \rangle_{ H_0 } }{ \norm{ w }_{ H_{ \nicefrac{1}{2} - \delta } } } \\
& \leq \abs{ f }_{ C_{ \mathrm{b} }^2( (0,1) \times \R, \R ) } \biggl( \sup_{ w \in H_{ \nicefrac{1}{2} - \delta } \setminus \{ 0 \} } \frac{ \norm{ w }_{ L^\infty( \lambda_{ (0,1) }; \R ) } }{ \norm{ w }_{ H_{ \nicefrac{1}{2} - \delta } } } \biggr) \norm{ h }_{ H_0 } \norm{ g }_{ H_0 } < \infty.
\end{split}
\end{equation}
This and the assumption that $ \sup_{ x, v_1, v_2 \in H_r, \; \norm{ v_1 }_{ H_0 } \vee \norm{ v_2 }_{ H_0 } \leq 1 } \norm{ B''(x)( v_1, v_2 ) }_{ L_2( H_0, H_{ -\nicefrac{1}{2} } ) } < \infty $ show \eqref{eq:wave_equation,9} and thus complete the proof of (i). Furthermore, (ii) follows directly from (i) and Remark~\ref{rmk:existence_regularity}. In the remainder of this proof (iii) is established. Let $ \epsilon \in ( 4( \nicefrac{1}{2} -\gamma) , 1 ] $, $ \beta \in ( \nicefrac{1}{2}, 2 \gamma ] $ and $ \lambda_n \in \R $, $ n \in \N $, be real numbers which satisfy for all $ n \in \N $ that $ \beta = \nicefrac{1}{2} + \nicefrac{ ( \epsilon - 4( \nicefrac{1}{2} - \gamma ) ) }{2} $ and $ \lambda_n = - \vartheta \pi^2 n^2 $. In addition, let $ \bfLambda \colon D(\bfLambda) \subseteq H_0 \times H_{ -\nicefrac{1}{2} } \to H_0 \times H_{ -\nicefrac{1}{2} } $ be the linear operator such that $ D(\bfLambda) = H_{ \nicefrac{1}{2} } \times H_0 $ and such that for all $ (v,w) \in H_{ \nicefrac{1}{2} } \times H_0 $ it holds that
\begin{equation}
\bfLambda (v,w) = \twovector{ \sum_{ n=1 }^\infty  \abs{\lambda_n}^{ \nicefrac{1}{2} } \langle e_n , v \rangle_{ H_0 } e_n }{ \sum_{ n = 1 }^\infty  \abs{\lambda_n}^{ \nicefrac{1}{2} } \bigl\langle \abs{ \lambda_n }^{\nicefrac{1}{2}} e_n, w \bigr\rangle_{ H_{ -\nicefrac{1}{2} } } \abs{ \lambda_n }^{\nicefrac{1}{2}} e_n }.
\end{equation}
Then note for all $ v \in H_1 $ that $ A v = \sum_{n=1}^{\infty} \lambda_n \langle e_n, v \rangle_{ H_0 } e_n $ and that \smash{$ \norm{ \bfLambda^{ - \beta } }_{ L_2( H_0 \times H_{ - \nicefrac{1}{2} } ) } < \infty $}. Furthermore, observe that (i) and the fact that $ 2 \gamma - \beta = \nicefrac{ (1 - \epsilon) }{2} $ imply that $ \bfF \in \mathrm{Lip}^0( H_0 \times H_{ -\nicefrac{1}{2} }, H_{ 2\gamma - \beta } \times H_{ 2\gamma - \beta - \nicefrac{1}{2} } ) $. This and again (i) enable us to apply Corollary~\ref{cor:weakrates} to obtain that there exists a real number $ C \in [0, \infty ) $ such that for all $ N \in \N $ it holds that
\begin{equation}
\abs[\big]{ \E \bigl[ \phi \bigl(X_T^\infty \bigr)\bigr] - \E \bigl[\phi \bigl(X_T^N \bigr) \bigr] } \leq C \abs{ \lambda_{ N+1 } }^{ \beta - 2 \gamma } \leq \vartheta^{ \nicefrac{ ( \epsilon - 1 ) }{2} } \cdot C \cdot N^{ \epsilon - 1 }.
\end{equation}
The proof of Corollary~\ref{cor:wave_equation} is thus completed.
\end{proof}

In the proof of Corollary~\ref{cor:Anderson_model} below we use the following elementary and well-known result, Lemma~\ref{lem:linear_operator}.

\begin{lemma}\label{lem:linear_operator}
Let $ \K \in \{ \R, \C \} $, let $ (H, \langle \cdot, \cdot \rangle_H, \norm{\cdot}_H ) $ be a $ \K $-Hilbert space,  let $ \bbH \subseteq H $ be an orthonormal basis of $ H $, let $ A \colon D(A) \subseteq H \to H $ be a symmetric diagonal linear operator with $ \inf( \sigma_{\mathrm{P}}(A) ) > 0 $, let $ (H_r, \langle \cdot, \cdot \rangle_{H_r}, \norm{\cdot}_{H_r} ) $, $ r \in \R $, be a family of interpolation spaces associated to $ A $, and let $ q \in \R $, $ p \in [q, \infty) $, $ s \in \R $, $ r \in [s, \infty) $. Then 
\begin{itemize}
\item[(i)]
for all $ B \in L(H_q, H_s ) $ it holds that $ B \in L( H_q, H_r ) $ if and only if
\begin{equation} \label{eq:linear_operator,3}
\biggl( B \bigl( \mathrm{span}_{ H_0 }( \bbH ) \bigr) \subseteq H_r \quad \text{and} \quad \sup_{ w \in \mathrm{span}_{ H_0 }( \bbH ) \setminus \{0\} } \frac{ \norm{ B w }_{ H_r } }{\norm{w}_{ H_q } } < \infty \biggr),
\end{equation}
\item[(ii)]
for all $ B \in L(H_q, H_s ) $ it holds that $ B \in L( H_q, H_r ) $ if and only if
\begin{equation}
\biggl(  B( H_p ) \subseteq H_r \quad \text{and} \quad \sup_{ w \in H_p \setminus \{0\} } \frac{ \norm{ B w }_{ H_r } }{\norm{w}_{ H_q } } < \infty \biggr),
\end{equation}
\item[(iii)]
and for all $ B \in L( H_q, H_r ) $ it holds that
\begin{equation} \label{eq:linear_operator,1}
\norm{ B }_{ L( H_q, H_r ) } = \sup_{ w \in \mathrm{span}_{ H_0 }( \bbH ) \setminus \{0\} } \frac{ \norm{ B w }_{ H_r } }{\norm{w}_{ H_q } } = \sup_{ w \in H_p \setminus \{0\} } \frac{ \norm{ B w }_{ H_r } }{\norm{w}_{ H_q } }.
\end{equation}
\end{itemize}
\end{lemma}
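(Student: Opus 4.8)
The plan is to prove all three assertions by a single continuous-extension argument, relying on two standard properties of the interpolation spaces $ ( H_r )_{ r \in \R } $ associated to $ A $: first, that for all $ a, b \in \R $ with $ a \geq b $ the inclusion $ H_a \hookrightarrow H_b $ is continuous, and second, that $ \mathrm{span}_{ H_0 }( \bbH ) $ is a dense subset of $ H_a $ for every $ a \in \R $. Since $ p \geq q $ and $ r \geq s $, these give in particular $ \mathrm{span}_{ H_0 }( \bbH ) \subseteq H_p \subseteq H_q $ with both $ H_p $ and $ \mathrm{span}_{ H_0 }( \bbH ) $ dense in $ H_q $, and the continuous embedding $ H_r \hookrightarrow H_s $. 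I would first establish (i) and then deduce (ii) and (iii) from it.

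For the forward implication in (i) I would simply observe that $ \mathrm{span}_{ H_0 }( \bbH ) \subseteq H_q $, so that any $ B \in L( H_q, H_r ) $ satisfies $ B \bigl( \mathrm{span}_{ H_0 }( \bbH ) \bigr) \subseteq H_r $ and $ \sup_{ w \in \mathrm{span}_{ H_0 }( \bbH ) \setminus \{0\} } \nicefrac{ \norm{ Bw }_{ H_r } }{ \norm{ w }_{ H_q } } \leq \norm{ B }_{ L( H_q, H_r ) } < \infty $. For the reverse implication I would write $ c $ for the (finite) supremum in \eqref{eq:linear_operator,3} and fix $ v \in H_q $. Using density I would choose $ w_n \in \mathrm{span}_{ H_0 }( \bbH ) $, $ n \in \N $, with $ w_n \to v $ in $ H_q $; the estimate $ \norm{ B w_n - B w_m }_{ H_r } \leq c \+ \norm{ w_n - w_m }_{ H_q } $ then shows that $ ( B w_n )_{ n \in \N } $ is Cauchy in the complete space $ H_r $, hence convergent to some $ y \in H_r $. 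Because $ B \in L( H_q, H_s ) $ we also have $ B w_n \to B v $ in $ H_s $, while the continuity of $ H_r \hookrightarrow H_s $ forces $ B w_n \to y $ in $ H_s $; uniqueness of limits in $ H_s $ therefore yields $ B v = y \in H_r $. Passing to the limit in $ \norm{ B w_n }_{ H_r } \leq c \+ \norm{ w_n }_{ H_q } $ gives $ \norm{ B v }_{ H_r } \leq c \+ \norm{ v }_{ H_q } $, so $ B \in L( H_q, H_r ) $ with $ \norm{ B }_{ L( H_q, H_r ) } \leq c $.

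For (ii), the forward implication is identical, now using $ H_p \subseteq H_q $. For the reverse implication I would note that the $ H_p $-condition implies the $ \mathrm{span}_{ H_0 }( \bbH ) $-condition: since $ \mathrm{span}_{ H_0 }( \bbH ) \subseteq H_p $, the inclusion $ B( H_p ) \subseteq H_r $ gives $ B \bigl( \mathrm{span}_{ H_0 }( \bbH ) \bigr) \subseteq H_r $, and the supremum in \eqref{eq:linear_operator,3} is no larger than the corresponding supremum over $ H_p $; hence (i) applies and gives $ B \in L( H_q, H_r ) $. For (iii), I would combine the bound $ \norm{ B }_{ L( H_q, H_r ) } \leq c $ from the reverse implication of (i) with the chain of trivial inequalities stemming from $ \mathrm{span}_{ H_0 }( \bbH ) \subseteq H_p \subseteq H_q $, namely $ \sup_{ w \in \mathrm{span}_{ H_0 }( \bbH ) \setminus \{0\} } \nicefrac{ \norm{ Bw }_{ H_r } }{ \norm{ w }_{ H_q } } \leq \sup_{ w \in H_p \setminus \{0\} } \nicefrac{ \norm{ Bw }_{ H_r } }{ \norm{ w }_{ H_q } } \leq \norm{ B }_{ L( H_q, H_r ) } $, which squeezes all three quantities to coincide.

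I do not expect a genuinely hard step here; the whole argument is a routine density-plus-completeness extension. The one point requiring care is the identification of the two a priori distinct limits of $ ( B w_n )_{ n \in \N } $ — the $ H_r $-limit $ y $ and the $ H_s $-limit $ B v $ — which is precisely where the continuity of the embedding $ H_r \hookrightarrow H_s $ (a consequence of $ r \geq s $ and the definition of the interpolation spaces) enters; without it one could not conclude that $ B v $ actually lands in the smaller space $ H_r $.
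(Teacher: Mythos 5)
Your proof is correct. The paper offers no argument for this lemma at all---it is introduced as an ``elementary and well-known result'' before Corollary~\ref{cor:Anderson_model} and left unproven---so there is no authorial proof to compare against; your density-plus-completeness extension (Cauchy sequence in the complete space $H_r$, with the two a priori distinct limits identified through the continuous embedding $H_r \hookrightarrow H_s$ and uniqueness of limits in $H_s$) is exactly the standard justification, and the reductions of (ii) and (iii) to (i) via $\mathrm{span}_{H_0}(\bbH) \subseteq H_p \subseteq H_q$ are sound.
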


\begin{cor}[Hyperbolic Anderson model] \label{cor:Anderson_model}
Let $ T, \vartheta \in ( 0 , \infty ) $, $ \alpha, \beta \in \R $, let $ ( \Omega , \cF , \P ) $ be a probability space with a normal filtration $ ( \cF_t )_{ t \in [ 0 , T ] } $, let $ ( H, \langle \cdot, \cdot \rangle_H, \norm{\cdot}_H ) $ be the $ \R $-Hilbert space given by $ ( H, \langle \cdot, \cdot \rangle_H, \norm{\cdot}_H ) = \bigl( L^2( \lambda_{ (0,1) }; \R ), \langle \cdot, \cdot \rangle_{ L^2( \lambda_{ (0,1) }; \R ) }, \norm{ \cdot }_{ L^2( \lambda_{ (0,1) }; \R ) } \bigr) $, let $ ( W_t )_{ t \in [ 0 , T ] } $ be an $ \id_H $-cylindrical $ ( \cF_t )_{ t \in [ 0 , T ] } $-Wiener process, let $ \{ e_n \}_{ n \in \N } \subseteq H $ satisfy for all $ n \in \N $ and $ \lambda_{ (0,1) } $-a.e.\ $ x \in ( 0, 1 ) $ that $ e_n( x ) = \sqrt{2} \sin( n \pi x ) $, let $ A \colon D(A) \subseteq H \to H $ be the Laplacian with Dirichlet boundary conditions on $ H $ multiplied by $ \vartheta $, let $ ( H_r, \langle \cdot, \cdot \rangle_{ H_r }, \norm{ \cdot }_{ H_r } )$, $ r \in \R $, be a family of interpolation spaces associated to $ -A $, let $ \bfP_N \colon H_0 \times H_{ -\nicefrac{1}{2} } \to H_0 \times H_{ -\nicefrac{1}{2} } $, $ N \in \N \cup \{ \infty \} $, be the mappings which satisfy for all $ N \in \N \cup \{ \infty \} $, $ (v,w) \in H_0 \times H_{ -\nicefrac{1}{2} } $ that $ \bfP_N( v, w ) =  \sum_{ n = 1 }^N ( \langle e_n, v \rangle_H e_n, \langle \sqrt{\vartheta} \pi n e_n, w \rangle_{ H_{ -\nicefrac{1}{2} } } \sqrt{\vartheta} \pi n e_n ) $, let $ \bfA \colon D ( \bfA ) \subseteq H_0 \times H_{ -\nicefrac{1}{2} } \to H_0 \times H_{ -\nicefrac{1}{2} } $ be the linear operator such that $ D ( \bfA ) = H_{ \nicefrac{1}{2} } \times H_0 $ and such that for all $ (v,w) \in H_{ \nicefrac{1}{2} } \times H_0 $ it holds that $ \bfA( v , w ) = ( w , A v ) $, let $ \xi \in L^2 ( \P \vert_{\cF_0} ; H_{ \nicefrac{1}{2} } \times H_0 ) $, $ \phi \in C_{ \mathrm{b} }^2( H_0 \times H_{ -\nicefrac{1}{2} }, \R ) $, $ f \in \mathrm{Lip}^2( (0,1) \times \R, \R ) $, and let $ \bfF \colon H_0 \times H_{ -\nicefrac{1}{2} } \to H_{ \nicefrac{1}{2} } \times H_0 $ and $ \bfB \colon H_0 \times H_{ -\nicefrac{1}{2} } \to L_2( H_0, H_0 \times H_{ -\nicefrac{1}{2} } ) $ be the mappings which satisfy for all $ (v,w) \in H_0 \times H_{ -\nicefrac{1}{2} } $, $ u \in H_1 $ and $ \lambda_{ (0,1) } $-a.e.\ $ x \in ( 0, 1 ) $ that $ \bigl( \bfF(v,w) \bigr)( x ) = \bigl( 0, f( x, v(x) ) \bigr) $ and $ \bigl( \bfB( v, w ) u \bigr) (x) = \bigl( 0, ( \alpha + \beta v(x) ) u(x) \bigr) $. Then
\begin{itemize}
\item[(i)]
there exist up to modifications unique $ ( \cF_t )_{ t \in [0,T] } $-predictable stochastic processes $ X^N \colon [ 0 , T ] \times \Omega \to \bigcap_{ \rho \in [0, \nicefrac{1}{4} ) } \bfP_N ( H_\rho \times H_{ \rho -\nicefrac{1}{2} } )$, $ N \in \N \cup \{ \infty \} $, which satisfy for all $ \rho \in [0, \nicefrac{1}{4} ) $, $ N \in \N \cup \{ \infty \} $, $ t \in [ 0 , T ] $ that $ \sup_{ s \in [ 0 , T ] } \norm{ X_s^N }_{ L^2 ( \P ; H_\rho \times H_{ \rho-\nicefrac{1}{2} } ) } < \infty $ and $ \P $-a.s.\ that
\begin{equation}
X_t^N =  \e^{\bfA t} \bfP_N \xi + \int_0^t \e^{ \bfA(t-s) } \bfP_N \bfF(X_s^N) \ds  + \int_0^t \e^{ \bfA(t-s) } \bfP_N \bfB(X_s^N) \dWs
\end{equation}
\item[(ii)]
and for all $ \epsilon \in (0,\infty) $ there exists a real number $ C \in [0, \infty) $ such that for all $ N \in \N $ it holds that
\begin{equation}
\abs[\big]{ \E \bigl[ \phi \bigl(X_T^\infty \bigr)\bigr] - \E \bigl[\phi \bigl(X_T^N \bigr) \bigr] } \leq C \cdot N^{ \epsilon - 1 }.
\end{equation}
\end{itemize}
\end{cor}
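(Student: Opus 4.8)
The plan is to identify Corollary~\ref{cor:Anderson_model} as the special case of Corollary~\ref{cor:wave_equation} in which the diffusion coefficient is the affine multiplication operator $H_0 \ni v \mapsto M_{\alpha + \beta v} \in L_2(H_0, H_{-\nicefrac{1}{2}})$, where for $g \in H_0$ we write $M_g$ for the operator given by $(M_g u)(x) = g(x)\,u(x)$. Since $\bfF$ and $\bfB$ have exactly the structure assumed in Corollary~\ref{cor:wave_equation} (with $B(v) = M_{\alpha + \beta v}$), the entire proof reduces to verifying the hypotheses that Corollary~\ref{cor:wave_equation} imposes on $B$ and then invoking that corollary; the conditions on $\bfF$ are handled verbatim as in Corollary~\ref{cor:wave_equation} from $f \in \mathrm{Lip}^2((0,1)\times\R,\R)$. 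The decisive simplification is the affine structure: for all $v, h \in H_0$ it holds that $B(v) = M_\alpha + \beta\, M_v$, so that $v \mapsto B(v)$ is affine with constant derivative $B'(v)h = \beta\, M_h$ and $B'' = 0$. This makes the two $B''$-conditions of Corollary~\ref{cor:wave_equation} trivial (both sides vanish) and reduces the $C_{\mathrm{b}}^2$-requirement to the boundedness of the linear map $h \mapsto \beta\,M_h$ in the appropriate operator norm, so that the substantive input is only a small family of mapping properties of $M_g$.

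Second, I would establish the Hilbert--Schmidt and operator estimates using the explicit eigenbasis ($e_n(x) = \sqrt{2}\sin(n\pi x)$, hence $\abs{e_n} \le \sqrt{2}$, and $\lambda_n = -\vartheta\pi^2 n^2$) together with Lemma~\ref{lem:interpolation_spaces} and Lemma~\ref{lem:linear_operator}. The central computation is
\[
\norm{ M_g }_{ L_2( H_0, H_{ s - \nicefrac{1}{2} } ) }^2 = \sum_{n} \norm{ M_g e_n }_{ H_{ s - \nicefrac{1}{2} } }^2 = \sum_m \abs{ \lambda_m }^{ 2s - 1 } \norm{ g\, e_m }_{ H_0 }^2 \le 2 \norm{ g }_{ H_0 }^2 \sum_m \abs{ \lambda_m }^{ 2s - 1 },
\]
which is finite exactly when $s < \nicefrac{1}{4}$ (because $\abs{\lambda_m}^{2s-1}$ is comparable to $m^{4s-2}$). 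Taking $g = \alpha + \beta v$ yields $B \in \mathrm{Lip}^0(H_0, L_2(H_0, H_{-\nicefrac{1}{2}}))$ (the case $s = 0$) and, for every $\rho \in [0, \nicefrac{1}{4})$, $B\vert_{H_\rho} \in \mathrm{Lip}^0(H_\rho, L_2(H_0, H_{\rho - \nicefrac{1}{2}}))$ (the case $s = \rho$), the Lipschitz property being immediate from affineness; this critical threshold $s < \nicefrac{1}{4}$ is precisely the origin of the regularity $\rho \in [0, \nicefrac{1}{4})$ asserted in~(i). The membership $B\vert_{H_r} \in C_{\mathrm{b}}^2(H_r, L_2(H_0, H_{-\nicefrac{1}{2}}))$ for $r \ge \nicefrac{1}{6}$ follows from the same bound with $s = 0$ and affineness. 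For the operator bound $B(v) \in L(H_0, H_{\gamma - \nicefrac{1}{2}})$ I would estimate, by duality against $H_{\nicefrac{1}{2} - \gamma}$ and H\"older's inequality, $\norm{M_g u}_{H_{\gamma - \nicefrac{1}{2}}} \le C\,\norm{g}_{L^p}\,\norm{u}_{H_0}$ and control $\norm{g}_{L^p}$ by the one-dimensional Sobolev embedding $H_\rho \hookrightarrow L^p$; a short accounting of the exponents shows this is admissible provided $\gamma \le \rho + \nicefrac{1}{4}$.

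Third, with all hypotheses verified, Corollary~\ref{cor:wave_equation} delivers~(i) (the solutions obtained for different $\rho \in [0, \nicefrac{1}{4})$ coincide by uniqueness in $\bfH_0$ and thus define $X^N$ on the intersection $\bigcap_{\rho \in [0, \nicefrac{1}{4})} \bfP_N(H_\rho \times H_{\rho - \nicefrac{1}{2}})$) and, for each admissible $\gamma \in (\nicefrac{1}{4}, \nicefrac{1}{2})$, the weak rate $\abs{\E[\phi(X_T^\infty)] - \E[\phi(X_T^N)]} \le C\,N^{\epsilon' - 1}$ for every $\epsilon' > 4(\nicefrac{1}{2} - \gamma) = 2(1 - 2\gamma)$. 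Given a prescribed $\epsilon \in (0, \infty)$, I would then choose $\gamma \in (\nicefrac{1}{4}, \nicefrac{1}{2})$ with $2(1 - 2\gamma) < \epsilon$ and $\rho \in [\gamma - \nicefrac{1}{4}, \nicefrac{1}{4})$ (a nonempty interval since $\gamma < \nicefrac{1}{2}$), together with $r = \nicefrac{1}{6}$, so that every constraint above is met simultaneously; this yields~(ii).

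The main obstacle will be the operator-bound estimate and, above all, the simultaneous satisfiability of the parameter constraints: to make $2(1 - 2\gamma)$ smaller than an arbitrary $\epsilon$ one must push $\gamma$ arbitrarily close to $\nicefrac{1}{2}$, while the Hilbert--Schmidt bound forces $\rho < \nicefrac{1}{4}$ and the $L(H_0, H_{\gamma - \nicefrac{1}{2}})$-bound forces $\rho \ge \gamma - \nicefrac{1}{4}$. Checking that the admissible window $[\gamma - \nicefrac{1}{4}, \nicefrac{1}{4})$ stays nonempty for all $\gamma < \nicefrac{1}{2}$, and that the borderline one-dimensional Sobolev embeddings genuinely supply the required product estimate, is the only delicate point; the remainder is the routine affine-coefficient specialization of Corollary~\ref{cor:wave_equation}.
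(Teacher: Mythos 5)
Your proposal is correct and follows essentially the same route as the paper: reduce to Corollary~\ref{cor:wave_equation} by verifying the hypotheses on $B(v)=M_{\alpha+\beta v}$ --- the Hilbert--Schmidt bound with threshold $\rho<\nicefrac{1}{4}$ and the duality/H\"older/Sobolev-embedding estimate giving $B\vert_{H_\rho}\in\mathrm{Lip}^0(H_\rho,L(H_0,H_{\gamma-\nicefrac{1}{2}}))$ under $\rho\ge\gamma-\nicefrac{1}{4}$ --- and then choose $\gamma$ close to $\nicefrac{1}{2}$ and $\rho\in[\gamma-\nicefrac{1}{4},\nicefrac{1}{4})$ exactly as the paper does. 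The only cosmetic differences are that you compute the Hilbert--Schmidt bound directly in the eigenbasis where the paper cites Section~7.2.1 in \cite{Jentzen2015}, and that you make explicit the affine structure ($B''=0$) that the paper uses tacitly to satisfy the second-derivative hypotheses of Corollary~\ref{cor:wave_equation}.
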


\begin{proof}[Proof of Corollary~\ref{cor:Anderson_model}]
Throughout this proof let $ B \colon H_0 \to L_2( H_0, H_{ -\nicefrac{1}{2} } ) $ be the mapping which satisfies for all $ v \in H_0 $, $ u \in H_1 $ and $ \lambda_{ (0,1) } $-a.e.\ $ x \in ( 0, 1 ) $ that $ \bigl( B( v ) u \bigr) (x) = ( \alpha + \beta v(x) ) u(x) $. Section~7.2.1 in \cite{Jentzen2015} then implies for all $ \rho \in [ 0, \nicefrac{1}{4} ) $ that $ B \in \mathrm{Lip}^0( H_0, L_2( H_0, H_{ \rho - \nicefrac{1}{2} } ) ) $. Item (i) in Corollary~\ref{cor:wave_equation} and Remark~\ref{rmk:existence_regularity} thus prove (i). Next observe that the Sobolev Embedding Theorem proves for all $ \rho \in ( 0, \nicefrac{1}{4} ) $ that
\begin{equation}
\Biggl[ \sup_{ w \in H_1 \setminus \{ 0 \} } \frac{ \norm{ w }_{ L^{ \nicefrac{1}{ (2\rho) } }( \lambda_{ (0,1) }; \R ) } }{ \norm{w}_{ H_{ { \nicefrac{1}{4} - \rho } } } } \Biggr] \vee \Biggl[ \sup_{ w \in H_\rho \setminus \{ 0 \} } \frac{ \norm{ w }_{ L^{ \nicefrac{2}{ (1- 4\rho) } }( \lambda_{ (0,1) }; \R ) }  }{ \norm{w}_{H_\rho} } \Biggr] < \infty.
\end{equation}
This and H\"{o}lder's inequality ensure for all $ \rho \in ( 0, \nicefrac{1}{4} ) $, $ v \in H_\rho $, $ u \in H_1 $ that
\begin{equation} \label{eq:Anderson_model,1}
\begin{split}
& \sup_{ w \in H_1 \setminus \{ 0 \} } \frac{ \abs{ \langle w, B(v)u \rangle_{ H_0 } } }{ \norm{w}_{ H_{ \nicefrac{1}{4} - \rho } } } \\
& \leq \Biggl[ \sup_{ w \in H_1 \setminus \{ 0 \} } \frac{ \norm{ w }_{ L^{ \nicefrac{1}{ (2\rho) } }( \lambda_{ (0,1) }; \R ) } }{ \norm{w}_{ H_{ { \nicefrac{1}{4} - \rho } } } } \Biggr] \norm{ \alpha + \beta v }_{ L^{ \nicefrac{2}{ (1- 4\rho) } }( \lambda_{ (0,1) }; \R ) } \norm{ u }_{ L^2( \lambda_{ (0,1) }; \R ) } \\
& \leq  \Biggl[ \sup_{ w \in H_1 \setminus \{ 0 \} } \frac{ \norm{ w }_{ L^{ \nicefrac{1}{ (2\rho) } }( \lambda_{ (0,1) }; \R ) } }{ \norm{w}_{ H_{ { \nicefrac{1}{4} - \rho } } } } \Biggr]  \Biggl[ \sup_{ w \in H_\rho \setminus \{ 0 \} } \frac{ \norm{ w }_{ L^{ \nicefrac{2}{ (1- 4\rho) } }( \lambda_{ (0,1) }; \R ) }  }{ \norm{w}_{H_\rho} } \Biggr] \norm{ \alpha + \beta v }_{ H_\rho } \norm{ u }_{ H_0 } < \infty.
\end{split}
\end{equation}
Lemma~\ref{lem:interpolation_spaces} hence shows for all $ \rho \in ( 0, \nicefrac{1}{4} ) $, $ v \in H_\rho $, $ u \in H_1 $ that $ B(v) u \in H_{ \rho-\nicefrac{1}{4} } $. In addition, \eqref{eq:Anderson_model,1}, Lemma~\ref{lem:interpolation_spaces}, and Lemma~\ref{lem:linear_operator} prove for all $ \rho \in ( 0, \nicefrac{1}{4} ) $, $ v \in H_\rho $ that $ B(v) \in L( H_0, H_{ \rho - \nicefrac{1}{4} } ) $. For the remainder of this proof, let $ \epsilon \in ( 0, 1 ] $, $ \gamma \in ( \nicefrac{1}{2} - \nicefrac{\epsilon}{4}, \nicefrac{1}{2} ) $ and $ \rho \in [ \gamma - \nicefrac{1}{4}, \nicefrac{1}{4} ) $. It then follows for all $ v, w \in H_\rho $, $ u \in H_1 $ that
\begin{equation}
\begin{split}
& \norm{ ( B(v) - B(w) ) u }_{ H_{ \gamma - \nicefrac{1}{2} } } = \sup_{ w \in H_1 \setminus \{ 0 \} } \frac{ \abs{ \langle w, ( B(v) - B(w) ) u \rangle_{ H_0 } } }{ \norm{w}_{ H_{ \nicefrac{1}{4} - \rho } } } \\
& \leq  \Biggl[ \sup_{ w \in H_1 \setminus \{ 0 \} } \frac{ \norm{ w }_{ L^{ \nicefrac{1}{ (2\rho) } }( \lambda_{ (0,1) }; \R ) } }{ \norm{w}_{ H_{ { \nicefrac{1}{4} - \rho } } } } \Biggr]  \Biggl[ \sup_{ w \in H_\rho \setminus \{ 0 \} } \frac{ \norm{ w }_{ L^{ \nicefrac{2}{ (1- 4\rho) } }( \lambda_{ (0,1) }; \R ) }  }{ \norm{w}_{H_\rho} } \Biggr] \abs{\beta} \norm{ v - w }_{ H_\rho } \norm{ u }_{ H_0 } < \infty.
\end{split}
\end{equation}
This, Lemma~\ref{lem:interpolation_spaces}, and Lemma~\ref{lem:linear_operator} establish that $ B \vert_{ H_\rho } \in \mathrm{Lip}^0( H_\rho, L( H_0, H_{ \gamma - \nicefrac{1}{2} } ) ) $. Corollary~\ref{cor:wave_equation} thus completes the proof of Corollary~\ref{cor:Anderson_model}.
\end{proof}

\printbibliography

\end{document}